\title{On parameterizations of cyclic $N$-isogenies and strict $K$-curves lying above rational points of $Y_0^+(N)$}
\nc{\vol}{\on{vol}}
\nc{\Sim}{\on{Sim}}
\nc{\rank}{\on{rank}}
\nc{\C}{\mathbb{C}}
\nc{\Z}{\mathbb{Z}}
\nc{\Q}{\mathbb{Q}}
\nc{\SL}{\on{SL}}
\DeclareSymbolFont{cyrletters}{OT2}{wncyr}{m}{n}
\DeclareMathSymbol{\Sha}{\mathalpha}{cyrletters}{"58}
\nc{\Tor}{\on{Tor}}
\begin{document}
	\maketitle

\tableofcontents

\pagebreak

\section*{Acknowledgments}
\thispagestyle{empty}

I thank my thesis advisor Professor Noam Elkies for our many conversations and correspondences, for his gracious willingness to give advice and feedback, and for introducing me to the topics of this thesis. I also thank Benjamin Peirce Fellow Fabian Gundlach for his work with me during Summer 2020; without the background on elliptic curves I gained during this time, this thesis would not have been possible. I thank Alec Sun for help with proofreading. Finally, I thank my parents and sister for their continued support of my mathematical pursuits. 

\clearpage

\section{Introduction}

\subsection{Cyclic isogenies}

Modular curves are moduli spaces for elliptic curves: a point of a modular curve corresponds to an elliptic curve enhanced with certain torsion data. For example, the curve $X(1)$ corresponds to $\C$-isomorphism classes of curves, and the curves $X_0(N)$ correspond to such classes paired with additional data of a cyclic $N$-isogeny. Therefore, understanding the modular curves $X_0(N)$ is useful for understanding cyclic isogenies, which have many applications in elliptic curve computations. One of the most celebrated applications is the theory of descent, which often allows the computation of rank, conductor, and other invariants of interest for elliptic curves \cite{MR2514094}. The Schoof-Elkies-Atkin algorithm, which refines Schoof's algorithm for point-counting on elliptic curves over finite fields, relies on explicitly identifying cyclic isogenies and their kernels \cite{MR1486831}. Studying cyclic isogenies is a useful tool for finding elliptic curves with complex multiplication (CM), since any curve with a nontrivial cyclic self-isogeny is necessarily CM. 

Since $X(1)$ is a rational curve, the correspondence between elliptic curves and $X(1)$ can be reinterpreted as a rational parametrization of elliptic curves---that is, we associate to every point of $\C$ a $\C$-isomorphism class of elliptic curves. This correspondence is given explicitly by the $j$-invariant. When $X_0(N)$ has genus $0$, the same principle may be applied, and we obtain a correspondence between $\C$ and isomorphism classes of cyclic $N$-isogenies $E \to E'$. Analogously to the $j$-invariant, this correspondence may be explicitly given by a \emph{Hauptmodul} $h_N$ of $X_0(N)$. This Hauptmodul may be used to translate formulas involving $q$-expansions into rational functions involving $h_N$, clarifying data and providing a useful computational tool. 

Elliptic $K$-curves are closely related to the subject of cyclic isogenies. These are curves $E$ defined over some extension of a field $K$ that are isogenous to all of their Galois conjugates $E^\sigma$. They arise as a natural generalization of curves defined over $K$. In particular, $\Q$-curves have been extensively studied. Which special properties of elliptic curves over $\Q$, such as modularity, extend to $\Q$-curves \cite{ellenberg}? The connection to cyclic isogenies comes from a theorem of Elkies, which states that all non-CM $K$-curves arise from $K$-rational points on the quotient $X^*(N)$ of $X_0(N)$ by the action of the Atkin-Lehner involutions. 

\subsection{This thesis}

This thesis presents two original results. First, we tabulate explicit formulas for the coefficients of cyclic $N$-isogenous elliptic curves
\begin{align*}
	E: y^2 = x^3 - \frac{A_4}{48}x + \frac{A_6}{864} && E': y^2 = x^3 - \frac{A_4'}{48}x + \frac{A_6'}{864}
\end{align*}
in terms of a distinguished Hauptmodul on all modular curves $X_0(N)$ of genus $0$. To this end, we provide an exposition on the derivation of these Hauptmoduln as products of the Dedekind eta function based on the approach of \cite{MR0422158}. We also include an abbreviated table of rational functions for the $j$-invariant in terms of Hauptmoduln, and we discuss an application of these expressions to finding special values of the $j$-invariant at CM points.

The second original result of this thesis is a new theorem on $K$-curves given by a $K$-rational orbit $\{\tau, -1/N\tau\}$ of the Fricke involution on $Y_0(N)$. Points on $Y_0(N)$ correspond to isomorphism classes of isogenies over $\C$, but these classes split into many twists when considered over a number field $L$. We prove the following theorem:
 \begin{theorem*}
	 Let $L/K$ be a quadratic extension, and let $\{h, h'\} \subset Y_0(N)(L) \setminus Y_0(N)(K)$ be the fiber of a non-CM point in $Y_0^+(N)(K)$. Let $E, E'$ be the corresponding isogenous curves. Then the following are equivalent:
	 \begin{enumerate}[(i)]
	 \item 
There exists a choice of twist of the cyclic $N$-isogeny $E \to E'$ defined over $L$ such that $E'$ is isomorphic over $L$ to the conjugate curve $\conj{E}$ of $E$.
\item At least one of the integers $N$ or $-N$ lies in the image of the Galois norm map $L^\times \to K^\times$. 
	 \end{enumerate}
 \end{theorem*} 
Moreover, we give an explicit construction for all twists satisfying condition (i). In general, the isogenies between an elliptic $K$-curve and its conjugates might only be defined over some separable extension of $L = \Q(E)$. We lead into this theorem with background on elliptic $K$-curves.

Throughout, we assume basic familiarity with the theory of elliptic curves and modular forms. However, we provide background on the role of modular curves as moduli spaces, including the fundamental correspondence, the Atkin-Lehner involutions, the treatment of modular curves over number fields, and the Tate curve. 
\section{Modular curves}

\subsection{Modular curves as moduli spaces for complex elliptic curves}

Let $\mathbb{H} \subset \C$ denote the complex upper half-plane and let $\SL_2(\Z)$ act on $\mathbb{H}$ by fractional linear transformations. A modular curve $Y$ is a Riemann surface given by a quotient space of $\mathbb{H}$ by the action of certain subgroups $\Gamma \subseteq \SL_2(\Z)$ known as congruence subgroups. 

\defn{Let $N$ be a positive integer. We define three classes of congruence subgroups:

\begin{align*}
	\Gamma(N) &= \left\{\begin{pmatrix}
		a & b \\ c & d 
	\end{pmatrix} \in \SL_2(\Z) : a \equiv d \equiv 1 \bmod N, c \equiv b \equiv 0 \bmod N\right\}\\
	\Gamma_1(N) &= \left\{\begin{pmatrix}
		a & b \\ c & d 
	\end{pmatrix} \in \SL_2(\Z) : a \equiv d \equiv 1 \bmod N, c \equiv  0 \bmod N\right\}\\
	\Gamma_0(N) &= \left\{\begin{pmatrix}
		a & b \\ c & d 
	\end{pmatrix} \in \SL_2(\Z) : c \equiv  0 \bmod N\right\}
\end{align*}
More generally, a congruence subgroup is defined as a subgroup $\Gamma \subseteq \SL_2(\Z)$ containing $\Gamma(N)$ for some positive integer $N$.}

\defn{We denote the modular curves corresponding to the three congruence groups above by
\begin{align*}
	Y(N) &= \BH/\Gamma(N),\\
	Y_1(N) &= \BH/\Gamma_1(N),\\
	Y_0(N) &= \BH/\Gamma_0(N).
\end{align*}}

It is also useful to consider the compactifications of these curves. Let $\BH^* = \BH \cup \BQ \cup \{\infty\}$ denote the extended upper half-plane. Since $\SL_2(\Z)$ also acts on $\BP^1_\BQ = \BQ \cup \{\infty\}$ by fractional linear transformations, its action on $\BH$ extends to $\BH^*$. 

\defn{We define
\begin{align*}
 	X(N) &= \BH^*/\Gamma(N),\\
 	X_1(N) &= \BH^*/\Gamma_1(N),\\
 	X_0(N) &= \BH^*/\Gamma_0(N),
 \end{align*}
and we also call these modular curves. These are compact Riemann surfaces, and orbit classes of $\BP^1_\BQ$ are called \emph{cusps}.}

The inclusions $\Gamma(N) \subseteq \Gamma_1(N) \subseteq\Gamma_0(N)$ induce natural maps $Y(N) \to Y_1(N) \to Y_0(N)$, and likewise for their compactified versions. Furthermore, if $N' \mid N$, then the inclusion $\Gamma(N) \subseteq \Gamma(N')$ induces a map $Y(N) \to Y(N')$; likewise for $\Gamma_0$ and $\Gamma_1$. 

These classes of modular curves can be interpreted as moduli spaces for \emph{enhanced elliptic curves}---that is, each point of a modular curve corresponds to an isomorphism class of elliptic curves $\C/\Lambda$ together with a certain type of torsion data, and this correspondence is natural in the sense that it commutes with the maps between the modular curves. 

For $\tau \in \BH$, let $\Lambda_\tau$ denote the lattice $\Z + \tau \Z$. We define three classes of enhanced elliptic curves corresponding to the above three classes of congruence subgroups.

\defn{An \emph{enhanced elliptic curve for $\Gamma_0(N)$} is a pair $(E, C)$, where $E$ is a complex elliptic curve and $C$ is a cyclic subgroup of $E$ of order $N$. We define $(E, C) \sim (E', C')$ if and only if there exists some isomorphism over $\C$ taking $E$ to $E'$ and $C$ to $C'$. Then we define
\begin{align*}
	S_0(N) = \{(E, C) \text{ enhanced elliptic curves for } \Gamma_0(N)\}/\sim.
\end{align*}

\defn{The \emph{degree} of an isogeny is the order of its kernel; if an isogeny has degree $N$, we call it an \emph{$N$-isogeny}. We say that an isogeny is \emph{cyclic} if it has cyclic kernel.} 

Instead of using the term ``enhanced elliptic curve for $\Gamma_0(N)$'' we will usually refer to $(E, C)$ by its corresponding cyclic $N$-isogeny given by the quotient $E \to E/C$.

\defn{An \emph{enhanced elliptic curve for $\Gamma_1(N)$} is a pair $(E, Q)$, where $E$ is a complex elliptic curve and $Q\in E$ is a point of order $N$. We define $(E, Q) \sim (E', Q')$ if and only if there exists some isomorphism taking $E$ to $E'$ and $Q$ to $Q'$. Then we define
\begin{align*}
	S_1(N) = \{(E, Q) \text{ enhanced elliptic curves for } \Gamma_1(N)\}/\sim.
\end{align*}}
\defn{An \emph{enhanced elliptic curve for $\Gamma(N)$} is a pair $(E, (P,Q))$, where $E$ is a complex elliptic curve and $P, Q$ are a basis for the $N$-torsion subgroup $E[N]$ of $E$ with Weil pairing $e_N(P, Q) = e^{2\pi i/N}$.  We define $(E, (P, Q)) \sim (E', (P', Q'))$ if and only if there exists some isomorphism taking $E$ to $E'$ and both $P$ to $P'$ and $Q$ to $Q'$. Then we define
\begin{align*}
	S(N) = \{(E, (P, Q)) \text{ enhanced elliptic curves for } \Gamma(N)\}/\sim.
\end{align*}}

\begin{theorem}\label{modular-correspondence}

	\begin{enumerate}[(a)]
		\item The points of $Y_0(N)$ correspond bijectively to elements of $S_0(N)$ under the map
		\begin{align*}
			\tau \leftrightarrow (\C/\Lambda_\tau, \wangle{1/N}).
		\end{align*}
		\item The points of $Y_1(N)$ correspond bijectively to elements of $S_1(N)$ under the map
		\begin{align*}
			\tau \leftrightarrow (\C/\Lambda_\tau, 1/N).
		\end{align*}
		\item The points of $Y_0(N)$ correspond bijectively to elements of $S_0(N)$ under the map
		\begin{align*}
			\tau \leftrightarrow (\C/\Lambda_\tau, (1/N, \tau/N)).
		\end{align*}
	\end{enumerate}
\end{theorem}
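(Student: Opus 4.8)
The plan is to reduce all three statements to the standard dictionary between complex elliptic curves and lattices, and then to track the torsion enhancement through this dictionary. First I would recall the two foundational facts. Every complex elliptic curve is of the form $\C/\Lambda$ for a lattice $\Lambda \subset \C$, and every holomorphic map $\C/\Lambda \to \C/\Lambda'$ fixing the origin is induced by a multiplication $z \mapsto \lambda z$ with $\lambda\Lambda \subseteq \Lambda'$; it is an isomorphism precisely when $\lambda\Lambda = \Lambda'$. Second, every lattice is homothetic to some $\Lambda_\tau = \Z + \tau\Z$ with $\tau \in \BH$: choosing an oriented basis $(\omega_1,\omega_2)$ with $\omega_1/\omega_2 \in \BH$ and scaling by $\omega_2^{-1}$ produces $\Lambda_\tau$ with $\tau = \omega_1/\omega_2$, and the only remaining ambiguity is the choice of oriented basis, on which $\SL_2(\Z)$ acts by $\tau \mapsto \gamma\tau$. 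Together these already yield the bare correspondence $\SL_2(\Z)\backslash\BH \leftrightarrow \{\text{elliptic curves}\}/{\cong}$, i.e.\ the case $N=1$.

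In each of the three enhancements the torsion data lives in $N^{-1}\Lambda/\Lambda \cong (\Z/N\Z)^2$, and the standard basis of this group corresponds to the pair $(1/N, \tau/N)$. I would phrase the problem so that surjectivity of the correspondence amounts to showing every enhanced curve is isomorphic to one in standard form, and injectivity amounts to identifying the subgroup of $\SL_2(\Z)$ stabilizing the standard enhancement. Both rest on two group-theoretic inputs: the reduction map $\SL_2(\Z) \to \SL_2(\Z/N\Z)$ is surjective, and $\SL_2(\Z/N\Z)$ acts transitively on the relevant configurations in $(\Z/N\Z)^2$---on points of exact order $N$ for the $\Gamma_1(N)$ case, on cyclic subgroups of order $N$ for the $\Gamma_0(N)$ case, and on ordered bases with fixed Weil pairing for the $\Gamma(N)$ case corresponding to $S(N)$.

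I would carry out the argument in detail for the $\Gamma_1(N)$ case and indicate the modifications for the other two. Given $(E,Q)$ with $Q$ of order $N$, write $E = \C/\Lambda$ with oriented basis $(\omega_1,\omega_2)$, so $Q = (c\omega_1 + d\omega_2)/N$ with $\gcd(c,d,N)=1$; by the surjectivity and transitivity inputs some $\gamma \in \SL_2(\Z)$ carries the basis to one in which $Q = \omega_2'/N$, and scaling by $(\omega_2')^{-1}$ puts the pair in standard form $(\C/\Lambda_\tau, 1/N)$, proving surjectivity onto $S_1(N)$. For injectivity, suppose $(\C/\Lambda_\tau, 1/N) \sim (\C/\Lambda_{\tau'}, 1/N)$ via $z \mapsto \lambda z$. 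Then $\lambda\{\tau,1\}$ is an oriented basis of $\Lambda_{\tau'}$, yielding $\gamma \in \SL_2(\Z)$ with $\tau = \gamma\tau'$ and $\lambda = c\tau' + d$, where $(c,d)$ is the bottom row of $\gamma$; the torsion condition $\lambda/N \equiv 1/N \pmod{\Lambda_{\tau'}}$ then forces $d \equiv 1$, $c \equiv 0 \bmod N$, hence $\gamma \in \Gamma_1(N)$, and conversely. Thus the correspondence descends to a bijection $Y_1(N) = \Gamma_1(N)\backslash\BH \leftrightarrow S_1(N)$. For the $\Gamma_0(N)$ case one preserves only the subgroup $\langle 1/N\rangle$ rather than its generator, which relaxes the condition to $c \equiv 0 \bmod N$ and gives $\Gamma_0(N)$; for the $\Gamma(N)$ case one preserves the full ordered basis, yielding the congruences cutting out $\Gamma(N)$.

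The main obstacle is the bookkeeping in the $\Gamma(N)$ case, namely forcing the Weil pairing normalization to pin the stabilizer to exactly $\Gamma(N)$ rather than to a larger group. I would need the transformation law that changing an ordered basis of $E[N]$ by a matrix multiplies the Weil pairing by its determinant, combined with the fact that the isomorphisms arising from lattice homotheties $z \mapsto \lambda z$ are orientation-preserving and hence automatically respect the pairing; this is what restricts the stabilizer to determinant-one matrices and fixes it as $\Gamma(N)$. The transitivity statements for $\SL_2(\Z/N\Z)$, though standard, also require care: in each case one must verify that the stabilizer of the standard configuration \emph{inside} $\SL_2(\Z/N\Z)$ is precisely the reduction of the intended congruence subgroup, so that the clean correspondence between $\SL_2(\Z)$-orbits and isomorphism classes of enhanced curves emerges.
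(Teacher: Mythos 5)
Your proposal is correct, but there is essentially nothing in the paper to compare it against: the paper's entire proof of this theorem is the citation ``See [\cite{MR2112196}, Theorem 1.5.1].'' Your argument is a faithful reconstruction of the standard proof given in that reference---the lattice--homothety dictionary, surjectivity of $\SL_2(\Z) \to \SL_2(\Z/N\Z)$ together with transitivity on the relevant torsion configurations, and the stabilizer computations that cut out $\Gamma_1(N)$, $\Gamma_0(N)$, and $\Gamma(N)$---so your sketch simply fills in what the paper outsources. One incidental remark: part (c) of the statement as printed says $Y_0(N)$ and $S_0(N)$, an evident typo for $Y(N)$ and $S(N)$, and your reading of it as the $\Gamma(N)$/$S(N)$ case (with the Weil pairing normalization pinning the stabilizer to $\Gamma(N)$) is the intended one.
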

\begin{proof}
	See [\cite{MR2112196}, Theorem 1.5.1].
\end{proof}
\begin{corollary}
	The points of $\Gamma(1) = \Gamma_1(1) = \Gamma_0(1)$ correspond bijectively to $\C$-isomorphism classes of elliptic curves (without additional torsion data) via the map
	\begin{align*}
		\tau \leftrightarrow \C/\Lambda_\tau.
	\end{align*}
\end{corollary}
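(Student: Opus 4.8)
The plan is to deduce the corollary directly from Theorem~\ref{modular-correspondence} by specializing every part to the case $N = 1$. First I would observe that the defining congruence conditions modulo $1$ are vacuous, so that $\Gamma(1) = \Gamma_1(1) = \Gamma_0(1) = \SL_2(\Z)$ and hence $Y(1) = Y_1(1) = Y_0(1) = \BH/\SL_2(\Z)$. Thus all three parts of the theorem describe the correspondence for one and the same curve, and it remains only to interpret the enhanced structures in this degenerate case and to confirm that they carry no information beyond the underlying elliptic curve.

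Next I would check that each type of torsion datum becomes trivial. For $\Gamma_0(1)$, a cyclic subgroup of order $1$ must be the trivial subgroup $\{O\}$; for $\Gamma_1(1)$, a point of order $1$ must be the identity $O$; and for $\Gamma(1)$, the only basis of $E[1] = \{O\}$ is $(O, O)$. In each case the enhanced datum is uniquely determined by $E$ alone, so the forgetful map $(E, \text{data}) \mapsto E$ is a bijection onto the set of elliptic curves. I would then note that the equivalence relation $\sim$ collapses accordingly: since the data is canonical, any $\C$-isomorphism $E \to E'$ automatically carries the trivial datum on $E$ to the trivial datum on $E'$, so $(E, \text{data}) \sim (E', \text{data}')$ holds precisely when $E$ and $E'$ are $\C$-isomorphic. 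Therefore $S(1) = S_1(1) = S_0(1)$ is exactly the set of $\C$-isomorphism classes of elliptic curves.

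Assembling these observations, under the bijection of the theorem the point $\tau$ corresponds to $(\C/\Lambda_\tau, \text{trivial data})$, which we identify with $\C/\Lambda_\tau$, giving the claimed correspondence $\tau \leftrightarrow \C/\Lambda_\tau$. I do not expect a genuine obstacle, since the argument is a routine specialization; the only point demanding care is verifying that the Weil-pairing normalization in part (c) remains consistent at $N = 1$. This is immediate, as the required value $e_1(O, O) = e^{2\pi i} = 1$ agrees with the (necessarily trivial) pairing on $E[1]$, so no compatibility is lost in the degenerate case.
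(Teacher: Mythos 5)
Your proof is correct and is precisely the argument the paper leaves implicit: the corollary is stated without proof as an immediate specialization of Theorem~\ref{modular-correspondence} to $N = 1$, which is exactly what you carry out. Your extra checks (triviality of each torsion datum, collapse of the equivalence relation, and consistency of the Weil-pairing normalization $e_1(O,O) = 1$) fill in the routine details the paper takes for granted.
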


By part (a) of Theorem \ref{modular-correspondence}, we may interpret $Y_0(N)$ as parameterizing isomorphism classes of cyclic $N$-isogenies over $\C$. 

\subsection{The Fricke and Atkin-Lehner involutions}\label{fricke}

Any isogeny $\varphi: E \to E'$ of degree $N$ has a unique dual isogeny $\hat{\varphi}: E \to E'$ such that \begin{align*}\hat{\varphi} \circ \varphi &= [N]_{E}\\
\varphi \circ \hat{\varphi} &= [N]_{E'},\end{align*} where $[N]_E$ denotes the multiplication-by-$N$ map on $E$. When $\varphi$ is cyclic, so is $\hat{\varphi}$. Since a point $\tau \in Y_0(N)$ corresponds to a cyclic $N$-isogeny $\varphi$, we conclude that there must be some other point in $Y_0(N)$ corresponding to the dual isogeny $\hat{\varphi}$.

\defn{The \emph{Fricke involution} $w_N$ on $Y_0(N)$ is the map that sends a cyclic $N$-isogeny to its dual. We sometimes write $w$ in place of $w_N$ when $N$ is clear from context.}

\begin{proposition}
	The Fricke involution is given by the map $w_N: \tau \mapsto -\frac{1}{N\tau}$.
\end{proposition}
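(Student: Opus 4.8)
The plan is to verify that the map $w_N \colon \tau \mapsto -\frac{1}{N\tau}$ realizes dualization of isogenies under the correspondence of Theorem~\ref{modular-correspondence}(a). First I would recall that a point $\tau \in Y_0(N)$ corresponds to the enhanced curve $(\C/\Lambda_\tau, \wangle{1/N})$, and that the associated cyclic $N$-isogeny is the quotient map
\begin{align*}
	\varphi \colon \C/\Lambda_\tau \longrightarrow (\C/\Lambda_\tau)\big/\wangle{1/N} \cong \C/\Lambda',
\end{align*}
where the target lattice $\Lambda'$ is generated by $\Lambda_\tau$ together with $1/N$. Since $\Lambda_\tau = \Z + \tau\Z$, adjoining $1/N$ gives $\Lambda' = \tfrac{1}{N}\Z + \tau \Z$. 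I would then homothetically normalize this lattice by scaling by $N$ (an isomorphism of elliptic curves), obtaining $N\Lambda' = \Z + N\tau\Z = \Lambda_{N\tau}$, so that the target curve is $\C/\Lambda_{N\tau}$ and $\varphi$ corresponds up to isomorphism to the standard isogeny $\C/\Lambda_\tau \to \C/\Lambda_{N\tau}$.

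The key computation is then to identify the dual isogeny explicitly as another quotient of the same shape. I would compute the kernel of $\hat\varphi$ and check that the enhanced curve representing $\hat\varphi$ is $\C/\Lambda_{N\tau}$ with a distinguished cyclic subgroup of order $N$; applying a homothety to put this back into the normal form $(\C/\Lambda_{w_N(\tau)}, \wangle{1/N})$ should yield exactly $w_N(\tau) = -1/(N\tau)$. Concretely, the composite $\hat\varphi \circ \varphi$ must be multiplication by $N$ on $\C/\Lambda_\tau$, which pins down $\hat\varphi$ up to the unavoidable sign/automorphism ambiguity; scaling the lattice $\Lambda_{N\tau}$ by the factor $1/(N\tau)$ sends $N\tau \mapsto 1$ and $1 \mapsto 1/(N\tau)$, transforming $\Lambda_{N\tau}$ into $\Lambda_{-1/(N\tau)}$ up to reordering the basis, which is where the point $-1/(N\tau)$ emerges.

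An alternative, cleaner route is to argue via the matrix $w_N = \begin{pmatrix} 0 & -1 \\ N & 0 \end{pmatrix}$ acting on $\BH$ by fractional linear transformations, sending $\tau \mapsto -1/(N\tau)$. I would check that $w_N$ normalizes $\Gamma_0(N)$ (a direct conjugation computation shows $w_N \Gamma_0(N) w_N^{-1} = \Gamma_0(N)$), so that it descends to a well-defined involution on $Y_0(N)$, and that $w_N^2 = \begin{pmatrix} -N & 0 \\ 0 & -N \end{pmatrix}$ acts trivially, confirming it is an involution. It then remains only to verify on one representative that this geometric involution agrees with dualization on the level of isogenies, which reduces to the lattice computation above.

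The main obstacle I expect is bookkeeping the isomorphism class carefully: the enhanced-curve correspondence is only up to $\C$-isomorphism, so the lattices must be renormalized by homotheties at each step, and one must track the distinguished order-$N$ subgroup through these rescalings rather than merely tracking the underlying curve. The sign ambiguity inherent in the dual isogeny and the choice of cyclic generator $1/N$ versus its multiples are the places where errors naturally creep in, so the careful part is confirming that the generator of the kernel of $\hat\varphi$, after normalization, is again $\wangle{1/N}$ in the lattice $\Lambda_{-1/(N\tau)}$, thereby matching the point $w_N(\tau)$ under the correspondence exactly rather than merely up to the action of $\Gamma_0(N)$.
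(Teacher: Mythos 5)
Your proposal is correct and takes essentially the same approach as the paper: both reduce to the same lattice--homothety computation under the correspondence of Theorem \ref{modular-correspondence}(a). The paper merely organizes the bookkeeping differently---it writes down the isogenies attached to $\tau$ and to $-1/(N\tau)$ explicitly and checks that they compose to multiplication by $N$, hence are dual, rather than forming $\hat{\varphi}$ and renormalizing its kernel---and your normalizer remark about $\begin{pmatrix} 0 & -1 \\ N & 0 \end{pmatrix}$ is just an additional well-definedness check that the paper leaves implicit.
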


\begin{proof} Let $\varphi$ be a cyclic $N$-isogeny $(\C/\Lambda_\tau, \wangle{1/N})$, with $\tau\in \BH$. Its image is
\begin{align*}
	\C \Big/\left(\frac{1}{N} \Z + \tau\Z\right) \cong \C/\left(\Z + N\tau\Z\right) \cong \C/(N\tau\Z - \Z) \cong \C\Big/\left(\Z - \frac{1}{N\tau}\Z\right) = \C/\Lambda_{-1/N\tau}
\end{align*}
where the isomorphisms are given by scaling by $N$, applying the change of basis $S = \begin{pmatrix}
	0 & 1\\
	-1 & 0
\end{pmatrix} \in \SL_2(\Z)$, and then scaling by $\frac{1}{N \tau}$. A point $z \in \C/\Lambda_\tau$ is mapped to $-\frac{z}{\tau} \in \C/\Lambda_{-1/N\tau}$ under this isogeny. (Note that $-\frac{1}{N\tau} \in \mathbb{H}$, justifying the otherwise superfluous negative sign.)

In the other direction, the image of the isogeny $(\C/\Lambda_{-\frac{1}{N\tau}}, \wangle{1/N})$ is 
\begin{align*}
	\C\Big/\left(-\frac{1}{N\tau} \Z + \frac{1}{N}\Z\right) \cong \Lambda_{-\frac{1}{\tau}} \cong \Lambda_\tau,
\end{align*}
and $z \in \C/\Lambda_{-1/N\tau}$ gets mapped to $-N\tau z \in \C/\Lambda_\tau$. Thus, these two isogenies compose to the multiplication-by-$N$ map, so they are dual isogenies. The isogeny $(\C/\Lambda_\tau, \wangle{1/N})$ corresponds to $\tau \in Y_0(N),$ and the dual isogeny $(\C/\Lambda_{-\frac{1}{N\tau}}, \wangle{1/N})$ corresponds to $w\tau = -\frac{1}{N\tau}$.
\end{proof}

\defn{The curve $Y_0^+(N) = Y_0(N)/\{1, w\}$ is the quotient of $Y_0(N)$ by the action of the Fricke involution; we likewise define $X_0^+(N) = X_0(N)/\{1, w\}$.}

We interpret points of $Y_0^+(N)$ as unordered pairs of curves $E, E'$ related by cyclic $N$-isogenies. If a cyclic isogeny $E \to E$ of degree $N^2 > 1$ is self-dual, then it is not given by a multiplication-by-$N$ map, since such a map has kernel $$\Z/N\Z \times \Z/N\Z \not\cong \Z/N^2\Z.$$ We therefore conclude that the ramified points of the quotient $Y_0(N) \to Y_0^+(N)$, i.e. the fixed points of the Fricke involution, give elliptic curves with complex multiplication, though not all CM curves arise in this way. 

Let $N_1N_2$ be a coprime factorization of $N$, i.e. with $N_1$ and $N_2$ coprime. Let $P \in Y_0(N)$ correspond to a cyclic $N$-isogeny $\psi: E \to E'$ with cyclic kernel $C$, so that $E' \cong E/C$. We may factor this isogeny by first forming the quotient by the unique subgroup $C_1$ of order $N_1$ of $C$, and then forming the quotient by the image of $C$ in $E/C_1$. Alternatively, we may first form the quotient by the unique subgroup $C_2$ of order $N_2$, and then form the quotient by the image of $C$ in $E/C_2$. Both of these compositions yield $\psi$:
\begin{center}
	\begin{equation}\begin{tikzcd} \label{atkin-lehner diagram}
		& E/C_1 \arrow[dr, "\phi_2"]& \\
		E \arrow[ur, "\phi_1"] \arrow[rr, "\psi"]\arrow[dr, "\varphi_2"] & & E' \cong E/C\\
		&E/C_2 \arrow[ur, "\varphi_1"] &
	\end{tikzcd}
	\end{equation}
\end{center}
Here we have labeled isogenies so that $\phi_i$ and $\varphi_i$ are cyclic $N_i$-isogenies. Using the dual isogeny, we obtain a new $N$-isogeny
\begin{align*}
	\varphi_2 \circ \hat{\phi}_1: E/C_1 \to E/C_2.
\end{align*}
The kernel of this isogeny is isomorphic to $\Z/N_1 \Z \times \Z/N_2\Z$, which is isomorphic to $\Z/N\Z$ since $N_1$ and $N_2$ are coprime. Therefore, $\varphi_2 \circ \hat{\phi}_1$ is itself a cyclic $N$-isogeny, so for each such factorization $N_1N_2 = N$ we may associate to $P$ a unique point $P(N_1)$ corresponding to the isogeny $\varphi_2 \circ \hat{\phi}_1$. This association is an involution, since applying the construction a second time transforms $\varphi_2 \circ \hat{\phi_1}$ into $\phi_2 \circ \phi_1$. We have $P(1) = P$ and $P(N) = w(P)$, but we also obtain a larger class of involutions on $Y_0(N)$. 

\defn{Let $N_1N_2$ be a coprime factorization of $N$. Then the \emph{Atkin-Lehner involution} $w_{N_1}$ associated to $N_1$ on $Y_{0}(N)$ is the map
\begin{align*}
	Y_{0}(N) \to Y_0(N): P \mapsto P(N_1)
\end{align*}
as described above. We let $W(N)$ denote the group generated by the Atkin-Lehner involutions, and we let and $Y^*(N)$ denote the quotient of $Y_0(N)$ by the action of $W(N)$.}

We cite some results about the Atkin-Lehner involutions from \cite{lmfdb}. Generalizing the realization of the Fricke involution as the map $\tau \mapsto -1/N\tau$, we also have explicit representations of the Atkin-Lehner involutions as M\"obius transformations. 
\begin{proposition}
	The Atkin-Lehner involution $w_{N_1}$ is given as a M\"obius transformation by any matrix of form
	\begin{align*}
	 	\begin{pmatrix}
	 		aN_1 & b\\
	 		cN & dN_1
	 	\end{pmatrix}
	 \end{align*} 
	 with determinant $N_1$. 
\end{proposition}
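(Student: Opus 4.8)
The plan is to establish the proposition in two movements. First, I would exhibit a \emph{single} matrix of the stated shape whose induced M\"obius transformation realizes $w_{N_1}$ at the level of the moduli interpretation of Theorem \ref{modular-correspondence}(a). Second, I would show that \emph{every} matrix of that shape with determinant $N_1$ induces the same transformation on $Y_0(N)$, so that the choice of representative is immaterial. Combining the two yields the claim.

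For the first movement I would unwind the factorization construction that defines $w_{N_1}$. Write $\tau \in \BH$ for the point $(\C/\Lambda_\tau, \langle 1/N\rangle)$ and decompose $C = \langle 1/N\rangle$ as $C_1 + C_2$ with $C_1 = \langle 1/N_1\rangle$ of order $N_1$ and $C_2 = \langle 1/N_2\rangle$ of order $N_2$; this is legitimate since $\gcd(N_1,N_2)=1$. Then $E/C_1 = \C/\Lambda_1$ and $E/C_2 = \C/\Lambda_2$, where $\Lambda_1 = \tfrac1{N_1}\Z + \tau\Z$ and $\Lambda_2 = \tfrac1{N_2}\Z + \tau\Z$ and the quotient maps $\phi_1,\varphi_2$ are induced by the identity on $\C$. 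Computing the dual $\hat\phi_1\colon \C/\Lambda_1 \to \C/\Lambda_\tau$ as $z \mapsto N_1 z$, the isogeny $\varphi_2 \circ \hat\phi_1\colon \C/\Lambda_1 \to \C/\Lambda_2$ is also $z \mapsto N_1 z$, with kernel $\tfrac1{N_1}\Lambda_2 \big/ \Lambda_1$ and $\tfrac1{N_1}\Lambda_2 = \tfrac1N\Z + \tfrac{\tau}{N_1}\Z$. The substance of the step is to rewrite the enhanced curve $(\C/\Lambda_1, \ker)$ in normalized form $(\C/\Lambda_{\tau'}, \langle 1/N\rangle)$: since the kernel is cyclic of order $N$, I would choose a basis of $\Lambda_1$ adapted to the overlattice $\tfrac1{N_1}\Lambda_2$ (a $2\times2$ Smith-normal-form reduction, again using $\gcd(N_1,N_2)=1$) and read off $\tau'$ as the ratio of basis vectors. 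Carrying this out, and fixing signs so that the representative has positive determinant, yields $w_{N_1}(\tau) = \frac{N_1\tau + 1}{Ny\,\tau + N_1 x}$ for any integers $x,y$ with $N_1 x - N_2 y = 1$; that is, $w_{N_1}$ is induced by $\begin{pmatrix} N_1 & 1 \\ Ny & N_1 x\end{pmatrix}$, a matrix of the claimed shape with determinant $N_1$. As sanity checks, $N_1 = N$ recovers the Fricke matrix and $N_1 = 1$ gives the identity.

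For the second movement, let $W_1, W_2$ be two matrices of the form $\begin{pmatrix} aN_1 & b \\ cN & dN_1\end{pmatrix}$ with determinant $N_1$; expanding, the determinant is $N_1(adN_1 - bcN_2)$, so the condition reads $adN_1 - bcN_2 = 1$, solvable precisely because $\gcd(N_1,N_2)=1$. I would compute $\gamma := W_2 W_1^{-1}$ directly from $W_1^{-1} = \tfrac1{N_1}\,\mathrm{adj}(W_1)$. The factors of $N_1$ on the diagonal and of $N$ off the diagonal conspire so that every entry of $\gamma$ is an integer, its lower-left entry is divisible by $N$, and $\det\gamma = \det W_2/\det W_1 = 1$; hence $\gamma \in \Gamma_0(N)$ and $W_2 = \gamma W_1$. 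Since $\gamma$ acts trivially on $Y_0(N) = \BH/\Gamma_0(N)$, the two matrices induce the same map $[\tau] \mapsto [W_1\tau] = [W_2\tau]$. Together with the first movement, this shows every matrix of the stated shape induces $w_{N_1}$, and simultaneously that the transformation descends to $Y_0(N)$.

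The main obstacle is the normalization in the first movement: everything hinges on tracking $\varphi_2 \circ \hat\phi_1$ correctly through the scalings and, above all, on producing the basis of $\Lambda_1$ adapted to its cyclic overlattice so that $\tau'$ emerges as a genuine M\"obius image of $\tau$ rather than merely a homothety class. The coset bookkeeping of the second movement is routine once the adjugate is written out; the only delicate point there is keeping the factors $N_1$ and $N = N_1 N_2$ straight so that the divisibility $N \mid \gamma_{21}$ is manifest.
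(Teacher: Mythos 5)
The paper offers no proof of this proposition: it is quoted as a known result from the LMFDB (``We cite some results about the Atkin-Lehner involutions\dots''), so there is nothing internal to compare your argument against, and what you have written fills a genuine gap. On its own merits, your proof is correct. The second movement is complete as written: expanding $\gamma = W_2 W_1^{-1} = \tfrac{1}{N_1} W_2\,\mathrm{adj}(W_1)$ does give integer entries, a lower-left entry divisible by $N$, and $\det\gamma = 1$, so $W_2 = \gamma W_1$ with $\gamma \in \Gamma_0(N)$. The first movement's key claim also checks out: with $N_1 x - N_2 y = 1$, scaling $\Lambda_{W\tau}$ by $c = (Ny\tau + N_1 x)/N_1$ carries it onto $\Lambda_1 = \tfrac{1}{N_1}\Z + \tau\Z$, since the images of $1$ and $W\tau$ are $x + N_2 y\tau$ and $\tau + \tfrac{1}{N_1}$, whose change-of-basis matrix relative to $(\tfrac{1}{N_1}, \tau)$ has determinant $N_1 x - N_2 y = 1$; moreover the marked subgroup $\langle 1/N\rangle$ maps to $\langle \tfrac{x}{N} + \tfrac{y\tau}{N_1}\rangle$, which generates the kernel $\tfrac{1}{N_1}\Lambda_2/\Lambda_1$ precisely because $\gcd(x, N_2) = \gcd(y, N_1) = 1$. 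So $W\tau$ really is the moduli point of $\varphi_2 \circ \hat\phi_1$, and your two sanity checks ($N_1 = N$ recovering Fricke up to a translation in $\Gamma_0(N)$, $N_1 = 1$ giving the identity) are consistent. One point you should make explicit rather than fold into ``simultaneously'': descent to $Y_0(N)$ requires knowing that for $\delta \in \Gamma_0(N)$ the product $W\delta$ is again a matrix of the stated shape with determinant $N_1$ (a one-line check), after which your second movement yields $W\delta = \gamma' W$ and hence well-definedness on $\Gamma_0(N)$-orbits; comparing two fixed matrices alone does not give this.
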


This representation allows us to define an action of $W(N)$ on $X_0(N)$ as well, and thus allows us to define the quotient space $X^*(N) = X_0(N)/W(N)$.

\begin{proposition}
	If $N_1, M_1$ are coprime divisors of $N$, then $$w_{N_1} w_{M_1} = w_{N_1M_1}.$$ In particular, the Atkin-Lehner involutions on $X_0(N)$ commute.
\end{proposition}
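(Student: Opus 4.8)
The plan is to sidestep the moduli-theoretic definition and instead exploit the Möbius-matrix representation furnished by the preceding proposition, reducing the identity $w_{N_1}w_{M_1} = w_{N_1 M_1}$ to a single matrix multiplication. First I would record the setup: for $w_{N_1}$ and $w_{M_1}$ to be defined, $N_1$ and $M_1$ must each be coprime to their cofactor in $N$, and when they are moreover coprime to each other it is an elementary check that $N_1 M_1$ is again coprime to $N/(N_1 M_1)$, so that $w_{N_1 M_1}$ is defined and $N_1 M_1 \mid N$. I would then invoke the proposition to choose representatives
\[
W_{N_1} = \begin{pmatrix} a N_1 & b \\ c N & d N_1 \end{pmatrix}, \qquad
W_{M_1} = \begin{pmatrix} a' M_1 & b' \\ c' N & d' M_1 \end{pmatrix}
\]
of determinants $N_1$ and $M_1$ respectively.

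The core step is to multiply these and observe that the product again has the shape prescribed by the proposition, now for the divisor $N_1 M_1$. Carrying out the product,
\[
W_{N_1} W_{M_1} = \begin{pmatrix} a a' N_1 M_1 + b c' N & a N_1 b' + b d' M_1 \\ (c a' M_1 + d c' N_1)N & c b' N + d d' N_1 M_1 \end{pmatrix},
\]
and its determinant is $N_1 M_1$ by multiplicativity. Because $N_1 M_1 \mid N$, each of the two diagonal entries is a multiple of $N_1 M_1$ and the lower-left entry is a multiple of $N$; the upper-right entry is unconstrained. Hence the product is a matrix of the form $\begin{pmatrix} (N_1 M_1)\ast & \ast \\ N \ast & (N_1 M_1)\ast \end{pmatrix}$ of determinant $N_1 M_1$, which by the cited proposition represents $w_{N_1 M_1}$. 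Since composition of Möbius transformations corresponds to matrix multiplication in the same order, this yields $w_{N_1} w_{M_1} = w_{N_1 M_1}$ as maps on $X_0(N)$.

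For the ``in particular'' assertion I would argue that the whole group $W(N)$ is abelian by exhibiting commuting generators. Every divisor $Q$ coprime to its cofactor factors as the product of the prime-power pieces $p^{v_p(N)}$ over the primes $p \mid Q$, and repeated application of the coprime formula just proved gives $w_Q$ as the product of the corresponding involutions $w_{p^{v_p(N)}}$. These prime-power involutions pairwise commute, since for distinct primes $p,q$ the formula gives $w_{p^a} w_{q^b} = w_{p^a q^b} = w_{q^b} w_{p^a}$; as they generate $W(N)$, every pair of Atkin--Lehner involutions commutes.

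The computation itself is routine, so the only real care is conceptual rather than arithmetic. The decisive input—and the mild obstacle to making the argument airtight—is that the preceding proposition pins down $w_{N_1}$ only up to the action of $\Gamma_0(N)$ and scaling; it is precisely the proposition's claim that \emph{every} matrix of the stated form and determinant represents the \emph{same} involution that makes the choice of representatives $W_{N_1}, W_{M_1}$ irrelevant and lets ``having the right form and determinant'' suffice to identify the product with $w_{N_1 M_1}$. I would also double-check the orientation convention, namely that $W_{N_1} W_{M_1}$ realizes $w_{N_1}\circ w_{M_1}$ and not its reverse, though by the symmetry of the coprimality hypothesis this cannot affect the final identity.
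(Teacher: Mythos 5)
Your proof is correct, and it is worth noting that the paper itself gives no proof to compare against: both this proposition and the matrix-representation statement you invoke are quoted from the LMFDB reference without argument, so your derivation is a legitimate filling of that gap. The matrix product and its determinant are computed correctly; the divisibility checks (the diagonal entries are multiples of $N_1M_1$ precisely because $N_1M_1 \mid N$, and the lower-left entry is a multiple of $N$) are exactly what is needed; and you correctly locate the one load-bearing point, namely that the preceding proposition asserts that \emph{every} integral matrix of the prescribed shape and determinant induces the same map on $X_0(N)$ (equivalently, any two such matrices differ by left multiplication by an element of $\Gamma_0(N)$, so the induced map on the quotient is independent of the chosen representative). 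Your preliminary check that $N_1M_1$ is again coprime to its cofactor in $N$, and your reduction of general commutativity to the prime-power generators $w_{p^{v_p(N)}}$ (distinct primes commute by the symmetry of the coprime formula, while coincident factors are literally the same map), are both sound. An alternative route, closer in spirit to the paper's actual definition of $w_{N_1}$ via factoring the cyclic kernel and dualizing the $N_1$-part, would be to decompose the kernel of the isogeny into its coprime primary pieces and observe that applying $w_{N_1}$ and then $w_{M_1}$ dualizes exactly the $N_1M_1$-part; that argument avoids the matrix proposition entirely and stays within the moduli-theoretic language, but requires more bookkeeping with dual isogenies. Given that the paper states the matrix representation immediately beforehand, your proof is the more economical of the two.
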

\begin{corollary}
	Let $\omega(N)$ denote the number of distinct prime divisors of $N$. Then $W(N) \cong (\Z/2\Z)^{\omega(N)}$, with each element of this group corresponding to an ordered coprime factorization $N_1N_2 = N$. 
\end{corollary}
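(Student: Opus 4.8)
The plan is to exhibit an explicit isomorphism from the elementary abelian group $(\Z/2\Z)^{\omega(N)}$, realized as the collection of subsets of the set of primes dividing $N$ under symmetric difference, onto $W(N)$. Write $p_1, \dots, p_{\omega(N)}$ for the distinct primes dividing $N$ and $p_i^{e_i}$ for the exact power of $p_i$ dividing $N$. For a subset $S \subseteq \{p_1, \dots, p_{\omega(N)}\}$ set $N_S = \prod_{p_i \in S} p_i^{e_i}$. First I would observe that $N_S \cdot (N/N_S) = N$ is a coprime factorization, and conversely every ordered coprime factorization $N = N_1 N_2$ arises this way from the subset $S$ of primes dividing $N_1$, since each prime power $p_i^{e_i}$ must lie entirely in $N_1$ or entirely in $N_2$. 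This sets up the claimed bijection between group elements and ordered coprime factorizations and shows there are exactly $2^{\omega(N)}$ of the latter.

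Next I would define $\Phi(S) = w_{N_S}$ (with $w_1 = \mathrm{id}$) and check it is a group homomorphism. For disjoint $S, T$ the relation $w_{N_1} w_{M_1} = w_{N_1 M_1}$ from the previous proposition applies directly, since $N_S, N_T$ are then coprime, giving $\Phi(S)\Phi(T) = w_{N_S N_T} = w_{N_{S \cup T}} = \Phi(S \triangle T)$. For general $S, T$ I would decompose into $A = S \setminus T$, $B = T \setminus S$, $C = S \cap T$, factor $w_{N_S} = w_{N_A} w_{N_C}$ and $w_{N_T} = w_{N_B} w_{N_C}$, and then use that the Atkin-Lehner involutions commute together with $w_{N_C}^2 = \mathrm{id}$ to collapse the product to $w_{N_A} w_{N_B} = w_{N_{A \sqcup B}} = \Phi(S \triangle T)$. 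Surjectivity of $\Phi$ is then immediate, since its image already contains every generator $w_{N_1}$ of $W(N)$.

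The crux is injectivity, which amounts to showing $w_{N_1} \ne \mathrm{id}$ whenever $N_1 > 1$; granting this, if $\Phi(S) = \Phi(T)$ then $\Phi(S \triangle T) = w_{N_{S\triangle T}} = \mathrm{id}$ forces $S \triangle T = \emptyset$, so $S = T$. To see $w_{N_1}$ is nontrivial for $N_1 > 1$, I would argue via the moduli interpretation rather than the matrix formula. Recall from the construction of $w_{N_1}$ that a point $P = (E, C)$ is sent to an isogeny whose source is $E/C_1$, where $C_1 \subseteq C$ is the subgroup of order $N_1$. If $w_{N_1}$ fixed $P$, then in particular $E/C_1 \cong E$, yielding a cyclic $N_1$-isogeny $E \to E$; its kernel is cyclic of order $N_1$, whereas every endomorphism of a non-CM curve is some $[m]$ with kernel $(\Z/m\Z)^2$, which is cyclic only when $m = 1$. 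Choosing $E$ without complex multiplication (a generic choice of $j$-invariant), I conclude $w_{N_1}$ moves $P$ unless $N_1 = 1$.

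The main obstacle is this last injectivity step: the homomorphism property and surjectivity are formal consequences of the two cited propositions, but ruling out unexpected coincidences $w_{N_1} = \mathrm{id}$ requires actually producing a point that is moved, and the cleanest way to do so is the non-CM moduli argument above. One must take some care that a cyclic self-isogeny of degree greater than $1$ genuinely forces CM, i.e. that it cannot be realized as an isomorphism composed with a multiplication map. With injectivity in hand, $\Phi$ is a bijective homomorphism, hence the desired isomorphism $W(N) \cong (\Z/2\Z)^{\omega(N)}$, and the bijection between group elements and ordered coprime factorizations established at the outset supplies the final clause of the statement.
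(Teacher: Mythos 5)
Your proof is correct. Note that the paper itself supplies no argument for this corollary: it is stated as an immediate consequence of the two propositions cited from the literature (multiplicativity $w_{N_1}w_{M_1} = w_{N_1M_1}$ for coprime divisors, and commutativity), so there is no step-by-step comparison to make. What your write-up adds, and correctly identifies as the crux, is that the isomorphism is \emph{not} a purely formal consequence of those propositions: the homomorphism property and surjectivity of your map $\Phi$ are formal (your symmetric-difference bookkeeping, using $w_{N_C}^2 = \mathrm{id}$, which the paper justifies when it observes that applying the Atkin--Lehner construction twice returns the original isogeny), but injectivity requires ruling out an accidental collapse $w_{N_1} = \mathrm{id}$ for some $N_1 > 1$. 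Your moduli-theoretic argument for this---if $w_{N_1}$ fixed a point $(E, C)$ then $E \cong E/C_1$ would give a cyclic self-isogeny of degree $N_1 > 1$, impossible for non-CM $E$ since every endomorphism is then multiplication by some integer $m$, whose kernel $(\Z/m\Z)^2$ is non-cyclic for $m > 1$---is exactly the mechanism the paper itself uses in Section 2.2 to show that fixed points of the Fricke involution correspond to CM curves, so your argument sits squarely within the paper's framework while filling a gap the paper leaves to the cited reference. The only point deserving extra care, which you flag yourself, is that fixing a point of $Y_0(N)$ means an isomorphism of \emph{enhanced} curves, but since this in particular forces an isomorphism of the underlying source curves, your deduction goes through.
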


\subsection{Modular functions and modular forms}
In this section we cite several standard results on modular forms. For more details, see \cite{MR2112196}. Throughout, we let $q(\tau) := e^{2\pi i \tau}$.

\defn{For even $k \geq 4$, the \emph{normalized Eisenstein series of weight $k$} is
\begin{align*}
	\mathsf{E}_k(\tau) := 1 - \frac{2k}{B_{k}} \sum_{n = 1}^\infty \sigma_{k- 1}(n)q^n,
\end{align*}
where $\sigma_{k}(n) = \sum_{d\mid n} d^k$ denotes the sum-of-divisors function and $B_k$ denotes the $k$-th Bernoulli number: $$B_0 = 1, \; B_{1} = -\frac{1}{2}, \; B_2 = \frac{1}{6}, \; B_4 = -\frac{1}{30}, \; B_6 = \frac{1}{42}, \; B_8 = -\frac{1}{30}, \; B_{10} = \frac{5}{66}, \; B_{12} = -\frac{691}{2730}, \dots$$ (The odd-indexed Bernoulli numbers $B_{2n + 1}$ vanish for $n > 0$.) We also define a second normalization $G_k$ of the Eisenstein series given by
\begin{align*}
	G_{k}(\tau) := 2\zeta(k) \mathsf{E}_{k}(\tau)
\end{align*}
where $\zeta$ is the Riemann zeta function.}

Both normalizations of the Eisenstein series are useful: the $q$-expansion of $\mathsf{E}_k$ has rational coefficients involving the arithmetic function $\sigma_{k - 1}$, while the series $G_{k}$ are involved in the correspondence between complex elliptic curves $\C/\Lambda$ and their affine models:  

\begin{theorem}\label{Uniformization}
	 \emph{(Uniformization.)} Let $\C/\Lambda_\tau$ be a complex elliptic curve. Then 
	\begin{align*}
		E: y^2 = 4x^3 - 60G_4(\tau)x - 140G_6(\tau)
	\end{align*}
	is an affine elliptic curve, and the map $z \mapsto (\wp(z), \wp'(z))$ is an isomorphism of complex Lie groups. Moreover, for every affine elliptic curve $F/\C$ defined by a Weierstrass equation as above, there exists a unique lattice $\Lambda_\tau$ up to homothety such that the curve $E$ given above is isomorphic to $F$.
\end{theorem}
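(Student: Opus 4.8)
The plan is to realize the isomorphism concretely through the Weierstrass $\wp$-function of the lattice $\Lambda_\tau$, namely
\[
\wp(z) = \frac{1}{z^2} + \sum_{\omega \in \Lambda_\tau \setminus \{0\}} \left( \frac{1}{(z - \omega)^2} - \frac{1}{\omega^2}\right).
\]
First I would check that this series converges absolutely and locally uniformly off the lattice, defining a meromorphic, $\Lambda_\tau$-periodic function with double poles exactly at the lattice points; differentiating termwise gives the odd, periodic function $\wp'$. Expanding each summand as a geometric series about $z = 0$ yields the Laurent expansion $\wp(z) = z^{-2} + 3 G_4 z^2 + 5 G_6 z^4 + \cdots$, whose coefficients are exactly the Eisenstein sums $G_{2k}(\tau)$ defined above. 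Comparing the principal parts of $(\wp')^2$ and of $4\wp^3 - 60 G_4 \wp - 140 G_6$ shows their difference is a holomorphic periodic function vanishing at the origin, hence identically zero by Liouville's theorem. This establishes the differential equation $(\wp')^2 = 4\wp^3 - 60 G_4 \wp - 140 G_6$, so that $z \mapsto (\wp(z), \wp'(z))$ indeed lands on the affine curve $E$.

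Next I would verify that $E$ is genuinely an elliptic curve, i.e.\ that the cubic $4x^3 - 60 G_4 x - 140 G_6$ has distinct roots. Since $\wp'$ is odd and periodic it vanishes at the three nonzero half-periods $\omega/2$, and the corresponding values $\wp(\omega/2)$ are precisely the three roots; they are pairwise distinct because $\wp - c$ has a single double pole in a fundamental parallelogram and hence, by the argument principle, takes each value exactly twice, so no value can be attained at two distinct half-periods. The same counting shows the map $\Phi \colon \C/\Lambda_\tau \to E$, $z \mapsto (\wp(z), \wp'(z))$ with $0 \mapsto \infty$, is a bijection: for generic $(x_0, y_0)$ the two preimages $\pm z_0$ of $x_0$ under $\wp$ are distinguished by the sign of $\wp'$, and the finitely many ramification values correspond to the half-periods. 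As $\Phi$ is holomorphic with nonvanishing derivative, it is a biholomorphism of compact Riemann surfaces.

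To upgrade $\Phi$ to an isomorphism of complex Lie groups I would prove the addition law. Given $z_1, z_2$ with $z_1 + z_2 \notin \Lambda_\tau$, I would integrate $z\,\mathrm{d}\log\!\left(\wp'(z) - a\wp(z) - b\right)$ around the boundary of a fundamental parallelogram, where $y = ax + b$ is the line through $\Phi(z_1)$ and $\Phi(z_2)$; periodicity forces the boundary integral into $\Lambda_\tau$, and the residue theorem identifies it with the sum of the $z$-values at the three intersection points, corrected by the triple pole at the origin. This yields $z_1 + z_2 + z_3 \equiv 0 \pmod{\Lambda_\tau}$ exactly when $\Phi(z_1), \Phi(z_2), \Phi(z_3)$ are collinear, which is the chord-and-tangent group law on $E$. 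Hence $\Phi$ is a homomorphism, and being bijective it is an isomorphism of Lie groups.

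Finally, for the ``moreover'' clause I would use the $j$-invariant as the complete isomorphism invariant on both sides. A homothety $\Lambda_\tau \mapsto c\Lambda_\tau$ rescales $G_4 \mapsto c^{-4} G_4$ and $G_6 \mapsto c^{-6} G_6$, while a Weierstrass isomorphism $(x,y) \mapsto (u^2 x, u^3 y)$ of target curves rescales the coefficients by the same weights; in both cases $j = 1728\,(60 G_4)^3/\Delta$, with $\Delta$ the nonvanishing discriminant, is invariant. Given a target curve $F$, one computes its invariant, locates $\tau$ with matching $j(\tau)$, and then chooses $c$ so that the coefficients $60G_4$ and $140G_6$ agree exactly, the special curves $j = 0, 1728$ being handled by a direct rescaling. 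I expect the genuine obstacle to be the \emph{surjectivity} of $j \colon \mathbb{H}/\SL_2(\Z) \to \C$: this is not formal and requires the valence argument (contour integration) counting the zeros and poles of the weight-$12$ form $\Delta$ and the weight-$0$ function $j$ inside the standard fundamental domain. The remaining ingredients---convergence of $\wp$, the differential equation, and the counting arguments---are routine once this analytic input is secured.
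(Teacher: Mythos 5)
The paper does not actually prove this theorem: its ``proof'' is a citation to [\cite{MR2514094}, Proposition VI.3.6 and Corollary VI.5.1.1]. Your sketch reconstructs precisely the standard argument contained in that reference, and it is sound: convergence of the $\wp$-series, the Laurent expansion $\wp(z) = z^{-2} + 3G_4(\tau)z^2 + 5G_6(\tau)z^4 + \cdots$ (which correctly matches the paper's normalization $G_k = 2\zeta(k)\mathsf{E}_k$, i.e.\ the classical lattice sums), the differential equation via Liouville, distinctness of the roots $\wp(\omega/2)$ by the two-to-one valence count, the addition law via the residue computation for $z\,d\log(\wp' - a\wp - b)$, and, for the existence of the lattice, the valence-formula argument for surjectivity of $j \colon \mathbb{H}/\SL_2(\Z) \to \C$. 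Two small points to tighten. First, at the half-periods $\wp'$ vanishes, so the claim that $\Phi$ ``has nonvanishing derivative'' must be read on the curve $E$ in the chart where $y$ serves as local coordinate; there one needs $\wp''(\omega/2) \neq 0$, which follows from the simplicity of the roots you just established. Second, the uniqueness clause requires \emph{injectivity} of $j$ on $\mathbb{H}/\SL_2(\Z)$ (two homothety classes of lattices with isomorphic curves have equal $j$, hence must coincide), not only surjectivity; fortunately this comes from the same valence argument you flagged as the genuine analytic input, so your identification of where the real work lies is correct, but the uniqueness direction deserves to be stated explicitly rather than left implicit in the phrase ``complete isomorphism invariant.''
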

\begin{proof}
	See [\cite{MR2514094}, Proposition VI.3.6 and Corollary VI.5.1.1]. 
\end{proof}
\begin{proposition}
	The normalized Eisenstein series $\mathsf{E}_k$ is a modular form of weight $k$ on $X(1)$; that is,
	\begin{align*}
		\mathsf{E}_k\left(\frac{a\tau + b}{c\tau + d}\right) = (c\tau + d)^k \mathsf{E}_k(\tau)
	\end{align*}
	for any $\begin{pmatrix}
		a & b \\ c & d
	\end{pmatrix} \in \SL_2(\Z)$.
\end{proposition}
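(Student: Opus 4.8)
The plan is to pass from the $q$-expansion defining $\mathsf{E}_k$ to the classical lattice-sum form of the Eisenstein series, for which the weight-$k$ transformation law is essentially a reindexing. Concretely, I would work with
\[
	\mathcal{G}_k(\tau) = \sum_{(m,n) \in \Z^2 \setminus \{(0,0)\}} \frac{1}{(m\tau + n)^k},
\]
which for even $k \geq 4$ converges absolutely and locally uniformly on $\mathbb{H}$. The first task is to identify $\mathcal{G}_k$ with the series $G_k = 2\zeta(k)\mathsf{E}_k$ of the definition, i.e.\ to compute the $q$-expansion of $\mathcal{G}_k$. One isolates the $m = 0$ terms, which contribute $\sum_{n \neq 0} n^{-k} = 2\zeta(k)$ since $k$ is even, and evaluates each inner sum $\sum_{n \in \Z} (m\tau + n)^{-k}$ for $m \neq 0$ via the Lipschitz summation formula (equivalently, by differentiating the cotangent partial-fraction identity $\pi \cot \pi z = \frac{1}{z} + \sum_{n \geq 1} \frac{2z}{z^2 - n^2}$ a total of $k-1$ times). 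Collecting the resulting geometric-type sums by the divisor function yields exactly $2\zeta(k)\bigl(1 - \frac{2k}{B_k}\sum_{n \geq 1}\sigma_{k-1}(n)q^n\bigr)$, so $\mathcal{G}_k = G_k$ and hence $\mathcal{G}_k = 2\zeta(k)\mathsf{E}_k$.

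Once this identification is in hand, the transformation law is immediate. For $\gamma = \begin{pmatrix} a & b \\ c & d\end{pmatrix} \in \SL_2(\Z)$ I would write
\[
	m \cdot \frac{a\tau+b}{c\tau+d} + n = \frac{(ma+nc)\tau + (mb+nd)}{c\tau+d},
\]
factor the common $(c\tau+d)^{-k}$ out of every term, and observe that the substitution $(m,n) \mapsto (ma+nc,\, mb+nd) = (m,n)\gamma$ is a bijection of $\Z^2 \setminus \{(0,0)\}$ because $\det \gamma = 1$. Absolute convergence justifies the rearrangement, so the reindexed sum is again $\mathcal{G}_k(\tau)$, giving $\mathcal{G}_k(\gamma\tau) = (c\tau+d)^k \mathcal{G}_k(\tau)$. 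Dividing by $2\zeta(k)$ transfers the identity verbatim to $\mathsf{E}_k$.

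If one prefers to minimize the analytic input, the same conclusion follows by checking the law only on the generators $S = \begin{pmatrix}0 & -1\\ 1 & 0\end{pmatrix}$ and $T = \begin{pmatrix}1 & 1\\ 0 & 1\end{pmatrix}$ of $\SL_2(\Z)$, then propagating it through the cocycle relation $j(\gamma_1\gamma_2, \tau) = j(\gamma_1, \gamma_2\tau)\,j(\gamma_2,\tau)$ for the automorphy factor $j(\gamma,\tau) = c\tau + d$. Invariance under $T$, namely $\mathsf{E}_k(\tau+1) = \mathsf{E}_k(\tau)$, is read off directly from the $q$-expansion since $q(\tau+1) = q(\tau)$ and $(c\tau+d)^k = 1$ in this case. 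I expect the genuine obstacle to be concentrated entirely in the $S$-transformation $\mathsf{E}_k(-1/\tau) = \tau^k \mathsf{E}_k(\tau)$: this is the one place where the $q$-expansion form is inadequate and the lattice-sum representation (or an equivalent Poisson-summation argument) is indispensable. Accordingly, the heart of the proof — and the step I would treat most carefully — is the passage from the $q$-expansion to $\mathcal{G}_k$ by Lipschitz summation, after which both the $S$-relation and the general case are routine.
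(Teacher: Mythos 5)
The paper does not actually prove this proposition: it sits in a section whose stated purpose is to cite standard results on modular forms, deferring to Diamond and Shurman for details. So there is no internal argument to compare against; what you have written is a self-contained proof of a fact the paper only quotes, and it is correct. Your route is the classical one: identify $\mathsf{E}_k$, up to the factor $2\zeta(k)$, with the lattice sum $\mathcal{G}_k(\tau) = \sum_{(m,n)\neq(0,0)}(m\tau+n)^{-k}$ via the Lipschitz/cotangent expansion --- this is where the hypothesis that $k \geq 4$ is even enters, both for absolute convergence and for pairing the terms $(m,n)$ and $(-m,-n)$ so that the inner sums for $m<0$ reduce to those for $m>0$ --- and then deduce the weight-$k$ law from the substitution $(m,n)\mapsto(ma+nc,\,mb+nd)$, which is a bijection of $\Z^2\setminus\{(0,0)\}$ because $\gamma$ is invertible over $\Z$, with absolute convergence licensing the rearrangement. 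The constant matching also works out: comparing the Lipschitz output with the definition $\mathsf{E}_k = 1 - \frac{2k}{B_k}\sum_{n\geq 1}\sigma_{k-1}(n)q^n$ uses exactly the evaluation $\zeta(k) = -\tfrac{(2\pi i)^k B_k}{2\,k!}$, so the identification $\mathcal{G}_k = 2\zeta(k)\mathsf{E}_k$ is consistent with the paper's definition $G_k = 2\zeta(k)\mathsf{E}_k$. Your closing remark is also accurate: the generator-based variant (checking $S$ and $T$ and propagating through the cocycle identity) does not avoid the analytic input, since $T$-invariance is visible on the $q$-expansion but the $S$-relation still needs the lattice-sum or a Poisson-summation argument. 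One point to make explicit in a full write-up: being a modular form also requires holomorphy on $\mathbb{H}$ and at $\infty$; both come for free from your identification, since $\mathcal{G}_k$ converges locally uniformly and the resulting $q$-expansion has no negative powers.
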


\defn{The \emph{Dedekind eta function} is 
\begin{align*}
	\eta(\tau) := q^{1/24} \prod_{n = 1}^\infty (1 - q^n).
\end{align*}
Here $q^{1/24}$ denotes $\exp(\pi i \tau/12)$.} 

\begin{proposition}
	The eta function satisfies the following functional equations:
\begin{align*}
	\eta(\tau + 1) &= \exp(\pi i/12)\; \eta(\tau),\\
	\eta(-1/\tau) &= \sqrt{-i\tau} \; \eta(\tau),
\end{align*}
where we take the branch of the square root that agrees with the square root on the positive real numbers. 
\end{proposition}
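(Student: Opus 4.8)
The first functional equation I would dispatch directly from the definition. Substituting $\tau+1$ for $\tau$ leaves $q = e^{2\pi i\tau}$ unchanged, so the infinite product $\prod_{n\geq 1}(1-q^n)$ is invariant, while the prefactor transforms as $\exp(\pi i(\tau+1)/12) = \exp(\pi i/12)\exp(\pi i\tau/12)$; multiplying through gives $\eta(\tau+1) = \exp(\pi i/12)\,\eta(\tau)$.

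For the second and substantive identity, my plan is to pass to logarithmic derivatives so as to reduce the claim to a transformation law for the weight-$2$ Eisenstein series. Since $|q|<1$ on $\mathbb{H}$, the product $\prod_{n\geq1}(1-q^n)$ converges to a nonvanishing value, so $\eta$ has no zeros on $\mathbb{H}$ and $\log\eta$ is locally well defined. Differentiating $\log\eta(\tau) = \frac{\pi i\tau}{12} + \sum_{n\geq1}\log(1-q^n)$ term by term and collecting the resulting Lambert series yields
\[ \frac{\eta'(\tau)}{\eta(\tau)} = \frac{\pi i}{12}\Big(1 - 24\sum_{n\geq1}\sigma_1(n)q^n\Big) = \frac{\pi i}{12}E_2(\tau), \]
where $E_2(\tau) := 1 - 24\sum_{n\geq1}\sigma_1(n)q^n$ is the (non-modular) weight-$2$ analogue of the series $\mathsf{E}_k$ from earlier.

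The key input is then the quasi-modular transformation law $E_2(-1/\tau) = \tau^2 E_2(\tau) - \frac{6i\tau}{\pi}$. Granting this, I would compute the logarithmic derivative of the ratio $R(\tau) := \eta(-1/\tau)/\eta(\tau)$: by the chain rule $\frac{d}{d\tau}\log\eta(-1/\tau) = \tau^{-2}\cdot\frac{\pi i}{12}E_2(-1/\tau)$, and substituting the transformation law the $E_2(\tau)$ contributions cancel against $\frac{\eta'}{\eta}(\tau)$, leaving $\frac{d}{d\tau}\log R(\tau) = \frac{1}{2\tau}$. Integrating over the simply connected region $\mathbb{H}$ gives $R(\tau) = C\sqrt{\tau}$ for a constant $C$ and a fixed branch of the square root. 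To pin down $C$, I would evaluate at $\tau = i$, which satisfies $-1/\tau = \tau$: there $R(i) = \eta(i)/\eta(i) = 1$, forcing $C = 1/\sqrt{i} = \sqrt{-i}$ and hence $R(\tau) = \sqrt{-i\tau}$. Checking that the chosen branch is the one agreeing with the positive real square root—note $-i\tau = 1$ at $\tau = i$—completes the identification $\eta(-1/\tau) = \sqrt{-i\tau}\,\eta(\tau)$.

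The main obstacle is establishing the transformation law for $E_2$, which is the only genuinely nontrivial analytic content: because the defining Eisenstein sum $\sum_{(c,d)}(c\tau+d)^{-2}$ is only conditionally convergent, the naive modularity argument fails and the anomalous term $-6i\tau/\pi$ appears. I would supply this either by Hecke's trick—introducing a convergence factor $|c\tau+d|^{-s}$, summing for $\operatorname{Re}(s)$ large, and analytically continuing to $s=0$ while tracking the non-holomorphic correction—or by deriving it from the transformation of a theta function via Poisson summation, which is also the natural source of the square-root factor. Everything else in the argument is routine bookkeeping together with a single special-value evaluation at $\tau = i$.
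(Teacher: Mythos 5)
Your proposal is mathematically sound. Note first that the paper itself offers no proof of this proposition: it appears in a block of results explicitly cited to a standard reference (\cite{MR2112196}), so there is no internal argument to compare against; your write-up supplies what the paper omits. The route you take is the classical one: the translation law is immediate from the definition, and the inversion law follows from the identity $\frac{\eta'}{\eta} = \frac{\pi i}{12}E_2$ together with the quasi-modular law $E_2(-1/\tau) = \tau^2 E_2(\tau) - \frac{6i\tau}{\pi}$, after which integrating $\frac{d}{d\tau}\log R(\tau) = \frac{1}{2\tau}$ over the simply connected domain $\mathbb{H}$ and evaluating at the fixed point $\tau = i$ pins down both the constant and the branch. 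Two points deserve explicit care in a full write-up. First, the $E_2$ transformation law must be proved by a method that does not itself presuppose the eta transformation, since several textbook treatments derive the former from the latter; both of your proposed methods --- Hecke's convergence-factor trick and Poisson summation applied to a theta series --- are independent of it, so there is no circularity, but this should be stated. Second, when you pass from $R(\tau) = C\sqrt{\tau}$ with $C = \sqrt{-i}$ to the packaged form $\sqrt{-i\tau}$, you are using that $-i\tau$ has positive real part for every $\tau \in \mathbb{H}$ (if $\tau = x + iy$ with $y > 0$ then $\operatorname{Re}(-i\tau) = y > 0$), so the principal square root is defined there and the identity $\sqrt{-i}\,\sqrt{\tau} = \sqrt{-i\tau}$ holds without crossing a branch cut; one sentence verifying this makes the branch bookkeeping airtight and confirms agreement with the positive real square root, as the statement requires.
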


\begin{lemma}{\label{nu-lemma}}
	Let $\nu(\tau) := \eta(N\tau)/\eta(\tau)$.
	\begin{enumerate}[(a)]
		\item The function $\nu(\tau)$ transforms under the Fricke involution as
		\begin{align*}
		w_N^* \nu(\tau) = \nu(-1/N\tau) = \frac{1}{\sqrt{N} \nu(\tau)}.
		\end{align*}
		\item The derivative $\nu'(\tau)$ transforms under the Fricke involution as
		\begin{align*}
			w_N^* \nu(\tau) = \nu'(-1/N\tau) = -\sqrt{N}\tau^2 \cdot \frac{\nu'(\tau)}{\nu(\tau)^2}.
		\end{align*}
	\end{enumerate}

\end{lemma}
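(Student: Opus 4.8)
The plan is to prove part (a) by direct substitution into the definition of $\nu$, using the two functional equations for $\eta$ recorded above, and then to obtain part (b) simply by differentiating the identity of part (a) in $\tau$.

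For part (a), I would write $\nu(-1/N\tau) = \eta\big(N\cdot(-1/N\tau)\big)/\eta(-1/N\tau) = \eta(-1/\tau)/\eta(-1/(N\tau))$. The numerator is handled directly by the functional equation $\eta(-1/\tau) = \sqrt{-i\tau}\,\eta(\tau)$. For the denominator I would apply the same functional equation with $\tau$ replaced by $N\tau$, giving $\eta(-1/(N\tau)) = \sqrt{-iN\tau}\,\eta(N\tau)$. Taking the quotient, the eta factors assemble into $\eta(\tau)/\eta(N\tau) = \nu(\tau)^{-1}$, while the square-root prefactors combine to $\sqrt{-i\tau}/\sqrt{-iN\tau} = N^{-1/2}$, yielding exactly $\nu(-1/N\tau) = 1/(\sqrt{N}\,\nu(\tau))$.

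The one point requiring care---and the main (mild) obstacle---is the branch of the square root, since the transformation law for $\eta$ is only stated for the principal branch agreeing with the positive real square root. I would note that for $\tau = x + iy \in \BH$ we have $-i\tau = y - ix$ with positive real part $y > 0$, and likewise $-iN\tau$ has positive real part, so both radicands lie in the right half-plane on which the principal square root is holomorphic; consequently $\sqrt{-i\tau}/\sqrt{-iN\tau} = \sqrt{(-i\tau)/(-iN\tau)} = \sqrt{1/N}$ with no spurious sign, and this equals the positive real $N^{-1/2}$ as required.

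For part (b), I would differentiate the identity of part (a). The left-hand side gives, by the chain rule with $\frac{d}{d\tau}(-1/N\tau) = 1/(N\tau^2)$, the expression $\nu'(-1/N\tau)\cdot \frac{1}{N\tau^2}$; the right-hand side gives $-N^{-1/2}\,\nu'(\tau)/\nu(\tau)^2$. Equating and clearing the factor $1/(N\tau^2)$ produces $\nu'(-1/N\tau) = -\sqrt{N}\,\tau^2\,\nu'(\tau)/\nu(\tau)^2$, using $N/\sqrt{N} = \sqrt{N}$. No analytic input is needed beyond part (a) and the holomorphy of $\nu$ on $\BH$, so this step is purely formal.
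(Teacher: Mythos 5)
Your proposal is correct and follows essentially the same route as the paper: part (a) by direct application of the functional equation $\eta(-1/\tau) = \sqrt{-i\tau}\,\eta(\tau)$ to numerator and denominator, and part (b) by differentiating the identity of part (a) via the chain rule. The only difference is that you spell out the details the paper calls ``immediate,'' including the check that both radicands lie in the right half-plane so no branch ambiguity arises---a point the paper's proof leaves implicit.
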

\begin{proof}
	Part (a) follows immediately from the functional equation $\eta(-1/\tau) = \sqrt{-i\tau}\eta(\tau)$. For part (b), apply the chain rule to write
	\begin{align*}
		\nu'(-1/N\tau) \cdot \frac{1}{N\tau^2} &= \frac{d}{d\tau}\nu(-1/N\tau).
	\end{align*}
	Substituting using part (a), this gives
	\begin{align*}
		\nu'(-1/N\tau) &= N\tau^2 \frac{d}{d\tau}\left( \frac{1}{\sqrt{N} \nu(\tau)}\right)\\
		&= -\sqrt{N} \tau^2 \cdot \frac{\nu'(\tau)}{\nu(\tau)^2}.
	\end{align*}
\end{proof}
\defn{
	The \emph{$j$-invariant} is
	\begin{align*}
		j(\tau) := 1728 \cdot \frac{\mathsf{E}_2(\tau)^3}{\mathsf{E}_2(\tau)^3 - 27\mathsf{E}_3(\tau)^2}.
	\end{align*}
}
\defn{\label{haupt-def} A \emph{Hauptmodul} $h$ (plural \emph{Hauptmoduln}) of a modular curve $X$ of genus $0$ is a generator of the function field $\C(X) = \C(h)$ normalized to have $q$-expansion $\frac{1}{q} + O(1)$. In particular, $h$ has its pole at the cusp at $\infty$.}
\begin{proposition}
	The $j$-invariant is a modular function on $X(1)$. Moreover, it is a Hauptmodul for $X(1)$, so that it parameterizes $\C$-isomorphism classes of elliptic curves. 
\end{proposition}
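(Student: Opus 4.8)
The plan is to verify, in turn, the two requirements of Definition \ref{haupt-def}: that $j$ is a genuine meromorphic function on the compact surface $X(1)$ whose $q$-expansion has the form $1/q + O(1)$, and that $j$ generates the field $\C(X(1))$. I would begin with $\SL_2(\Z)$-invariance. Reading the numerator as the cube of the weight-$4$ Eisenstein series and the denominator as the discriminant $\Delta$ (the weight-$12$ form $\mathsf{E}_4^3 - \mathsf{E}_6^2$ up to a constant), both are modular forms of weight $12$, so by the transformation law of $\mathsf{E}_k$ the weights cancel and the quotient is invariant under $\SL_2(\Z)$. Next I would show $j$ is holomorphic on $\BH$, which reduces to the nonvanishing of $\Delta$ on $\BH$; the cleanest route is the product expansion $\Delta = \eta^{24}$, since $\eta$ is the nonvanishing factor $q^{1/24}$ times terms $1 - q^n$ that cannot vanish when $|q| < 1$. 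Finally, computing leading terms from $\mathsf{E}_4 = 1 + 240q + \cdots$ and $\mathsf{E}_6 = 1 - 504q + \cdots$ gives $\mathsf{E}_4^3 = 1 + 720q + \cdots$ and $\Delta = q + O(q^2)$, whence $j = 1/q + 744 + O(q)$. This simultaneously exhibits a simple pole at the cusp, so that $j$ is meromorphic on $X(1)$, and confirms the normalization $1/q + O(1)$.

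The substantive step, and the one I expect to be the main obstacle, is proving $\C(X(1)) = \C(j)$. My approach is to show $j$ defines a degree-one map $X(1) \to \BP^1$, from which the equality of function fields is immediate. Since $j$ is holomorphic on $\BH$ with a single simple pole at the cusp, I would apply the valence formula for $\SL_2(\Z)$ to $j - c$ for an arbitrary constant $c$: this function has weight $0$ and satisfies $v_\infty(j - c) = -1$, so the formula forces its zeros in the fundamental domain to have total weighted order exactly $1$. Hence $j$ attains every value on $X(1)$ exactly once, so $j \colon X(1) \to \BP^1$ is a holomorphic bijection, and a degree-one map of compact Riemann surfaces is an isomorphism. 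Pulling back the coordinate on $\BP^1$ then yields $\C(X(1)) = \C(j)$. The delicate points here are the correct bookkeeping of the elliptic fixed points $i$ and $\rho$, carrying orbifold weights $1/2$ and $1/3$, in the valence formula, and the fact that a degree-one map is an isomorphism.

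With these in hand, the parametrization of $\C$-isomorphism classes follows by combining the isomorphism $X(1) \cong \BP^1$ with the moduli interpretation. By the Corollary to Theorem \ref{modular-correspondence}, the non-cuspidal points of $X(1)$ are in bijection with $\C$-isomorphism classes of elliptic curves via $\tau \leftrightarrow \C/\Lambda_\tau$, and by Theorem \ref{Uniformization} every such class is represented by a lattice $\Lambda_\tau$. Since $j$ is constant on $\SL_2(\Z)$-orbits and separates the points of $X(1)$, it assigns to each isomorphism class a single complex number, and conversely each value of $j$ determines the class uniquely. This gives the desired rational parametrization of $\C$-isomorphism classes of elliptic curves by the $j$-invariant.
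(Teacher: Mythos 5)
Your proposal is correct, and on the easy half (invariance and the $q$-expansion) it matches the paper; but on the substantive half it takes a genuinely different route. The paper's proof is two sentences: invariance because $j$ is a quotient of weight-$12$ forms, and then the Hauptmodul property is obtained by combining the expansion $q^{-1} + 744 + 196884q + \cdots$ with the fact that $j$ classifies $\C$-isomorphism classes of elliptic curves, which it takes as known by citing [\cite{MR2514094}, Proposition III.1.4]. That is, the paper uses the algebraic classification of curves by their $j$-invariant as an \emph{input} in order to conclude that $j$ separates points of $X(1)$ and hence generates $\C(X(1))$. You instead prove the generation statement analytically: the valence formula applied to $j - c$, which has $v_\infty = -1$ and no other poles, forces each value $c$ to be attained with total weighted multiplicity one (a simple zero at a generic point, a double zero at the orbit of $i$, or a triple zero at the orbit of $e^{2\pi i/3}$, each of which is a single point of $X(1)$), so $j \colon X(1) \to \mathbb{P}^1$ is a degree-one holomorphic map and hence an isomorphism of compact Riemann surfaces; the moduli parameterization then comes out as a \emph{consequence}, by combining this with the Corollary to Theorem \ref{modular-correspondence}, rather than serving as an ingredient. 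Your route buys a self-contained argument within the theory of modular forms, whose only external ingredient is the valence formula, and it correctly flags the orbifold bookkeeping at the elliptic points as the delicate step; the paper's route is shorter but outsources injectivity and surjectivity of $j$ to the algebraic theory of elliptic curves. Both are standard and both are valid; note only that the displayed definition of $j$ in the paper (with $\mathsf{E}_2$, $\mathsf{E}_3$) is a typo for the weight-$4$ and weight-$6$ series, which your reading silently and correctly repairs.
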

\begin{proof}
	The $j$-invariant is a modular function on $X(1)$ since it is the quotient of two weight $12$ modular forms and thus has trivial transformation law under $\SL_2(\Z)$. The fact that it is a Hauptmodul follows from computing its $q$-expansion 
	$$j(q) = q^{-1} + 744 + 196884q + \dots$$
	and from the fact that $j$ parameterizes $\C$-isomorphism classes of elliptic curves; see [\cite{MR2514094}, Proposition III.1.4]. 
\end{proof}

\defn{The \emph{Eisenstein series of weight $2$ and level $N$} is
\begin{align*}
	\mathsf{E}_2^{(N)}(\tau) := q \cdot \frac{d}{dq} \log \left(\frac{\eta(q^N)}{\eta(q)}\right) = \frac{N - 1}{24} + \sum_{n = 1}^\infty \sigma_1(n)(q^n - Nq^{Nn}).
\end{align*}
}
\begin{proposition}
	The Eisenstein series $\mathsf{E}_2^{(N)}(\tau)$ is a modular form of weight $2$ on $X_0(N)$. 
\end{proposition}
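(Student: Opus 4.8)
The plan is to recognize $\mathsf{E}_2^{(N)}$ as a specific linear combination of the classical (quasimodular) weight-$2$ Eisenstein series and to exploit the fact that the quasimodular anomaly cancels precisely over $\Gamma_0(N)$. First I would translate the defining eta-quotient expression into this form. Writing $q\,\frac{d}{dq} = \frac{1}{2\pi i}\frac{d}{d\tau}$ and differentiating $\log\eta$ termwise, the product formula for $\eta$ gives the logarithmic-derivative identity
\begin{align*}
	\frac{1}{2\pi i}\frac{d}{d\tau}\log\eta(\tau) = \frac{1}{24}E_2(\tau), \qquad E_2(\tau) := 1 - 24\sum_{n=1}^\infty \sigma_1(n)q^n.
\end{align*}
Applying this to $\log\nu(\tau) = \log\eta(N\tau) - \log\eta(\tau)$, where $\nu$ is as in Lemma \ref{nu-lemma}, and using the chain rule yields
\begin{align*}
	\mathsf{E}_2^{(N)}(\tau) = \tfrac{1}{24}\bigl(N\,E_2(N\tau) - E_2(\tau)\bigr),
\end{align*}
which I would confirm matches the stated $q$-expansion.

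Next I would record the quasimodular transformation law of $E_2$. This can be made self-contained: differentiating the functional equation $\eta(-1/\tau)=\sqrt{-i\tau}\,\eta(\tau)$ as above produces $E_2(-1/\tau) = \tau^2 E_2(\tau) + \frac{12\tau}{2\pi i}$, and together with the obvious invariance $E_2(\tau+1)=E_2(\tau)$ and the fact that the inversion $S$ and translation $T$ generate $\SL_2(\Z)$, this gives
\begin{align*}
	E_2\!\left(\frac{a\tau+b}{c\tau+d}\right) = (c\tau+d)^2 E_2(\tau) + \frac{12\,c\,(c\tau+d)}{2\pi i}
\end{align*}
for every $\left(\begin{smallmatrix} a & b\\ c & d\end{smallmatrix}\right)\in\SL_2(\Z)$.

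The heart of the argument is the cancellation of this anomaly over $\Gamma_0(N)$. Fix $\gamma = \left(\begin{smallmatrix} a & b\\ c & d\end{smallmatrix}\right)\in\Gamma_0(N)$ and write $c = Nc'$. The key observation is that
\begin{align*}
	N\gamma\tau = \gamma'(N\tau), \qquad \gamma' := \begin{pmatrix} a & Nb\\ c' & d\end{pmatrix}\in\SL_2(\Z), \qquad c'(N\tau)+d = c\tau+d.
\end{align*}
Applying the quasimodular law to both $E_2(\gamma\tau)$ and $E_2(N\gamma\tau)=E_2(\gamma'(N\tau))$ and forming $N\,E_2(N\gamma\tau)-E_2(\gamma\tau)$, the two anomaly terms become $N\cdot\frac{12c'(c\tau+d)}{2\pi i}$ and $\frac{12c(c\tau+d)}{2\pi i}$, which cancel exactly because $Nc' = c$. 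Hence $\mathsf{E}_2^{(N)}(\gamma\tau) = (c\tau+d)^2\mathsf{E}_2^{(N)}(\tau)$ for all $\gamma\in\Gamma_0(N)$, the desired weight-$2$ transformation. As a consistency check, Lemma \ref{nu-lemma} applied to $\nu'/\nu$ recovers the companion relation $\mathsf{E}_2^{(N)}(-1/N\tau) = -N\tau^2\,\mathsf{E}_2^{(N)}(\tau)$ under the Fricke involution.

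It remains to verify holomorphy, which I expect to be the most technical step. Holomorphy on $\BH$ is immediate, since $E_2(\tau)$ and $E_2(N\tau)$ are given by everywhere-convergent $q$-series. For the cusps I would use the weight-$2$ slash operator: at $\infty$ the expansion $\frac{N-1}{24}+\sum_n\sigma_1(n)(q^n-Nq^{Nn})$ has no negative powers, and for an arbitrary cusp $\gamma\infty$ I would show $\mathsf{E}_2^{(N)}\big|_2\gamma$ stays bounded as $\operatorname{Im}\tau\to\infty$. Writing $N\,E_2(N\tau)=E_2|_2 M$ with $M=\operatorname{diag}(N,1)$, decomposing $M\gamma=\gamma_1 T_1$ into $\gamma_1\in\SL_2(\Z)$ times an upper-triangular matrix, and noting that each slashed copy $E_2|_2\gamma_1$ is bounded at $\infty$ (its anomaly term $\frac{12c}{2\pi i(c\tau+d)}$ decays) while $|_2 T_1$ merely rescales the argument, shows that $\mathsf{E}_2^{(N)}$ is holomorphic at every cusp. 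The main obstacle is bookkeeping this coset and Hermite-form decomposition correctly across all cusps; the conceptual content, namely the anomaly cancellation of the third step, is the clean part.
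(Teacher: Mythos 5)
Your argument is correct, but there is nothing in the paper to compare it against: this proposition is stated without proof, in a section that opens by declaring that its results are cited from \cite{MR2112196}. Your proof is the standard self-contained argument, and its key steps all check out: the logarithmic derivative of $\eta(N\tau)/\eta(\tau)$ does give $\mathsf{E}_2^{(N)}(\tau) = \tfrac{1}{24}\bigl(N E_2(N\tau)-E_2(\tau)\bigr)$, matching the paper's $q$-expansion $\tfrac{N-1}{24}+\sum_{n}\sigma_1(n)(q^n - Nq^{Nn})$; the conjugation identity $N\gamma\tau = \gamma'(N\tau)$ with $\gamma' = \begin{pmatrix} a & Nb \\ c' & d\end{pmatrix} \in \SL_2(\Z)$, $c = Nc'$, and $c'(N\tau)+d = c\tau+d$ is exactly what makes the two quasimodular anomaly terms cancel; and the decomposition $M\gamma = \gamma_1 T_1$ with $T_1$ upper triangular handles holomorphy at the cusps. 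Two spots deserve slightly more care. First, deducing the anomaly law for all of $\SL_2(\Z)$ from the $S$ and $T$ cases is not automatic from generation alone: you must verify that the set of matrices satisfying the law is closed under products and inverses (a routine cocycle computation), or alternatively observe that $E_2(\tau) - \tfrac{3}{\pi\operatorname{Im}\tau}$ transforms with weight $2$ and inherits the law for every $\gamma$. Second, at a cusp you should record that $\mathsf{E}_2^{(N)}\big|_2\gamma$ is invariant under some translation $\tau \mapsto \tau + w$ (because $\gamma^{-1}\Gamma_0(N)\gamma$ contains one), so that boundedness as $\operatorname{Im}\tau\to\infty$ genuinely yields a $q$-expansion with no negative-power terms. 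Finally, your closing consistency check is apt: the Fricke relation $\mathsf{E}_2^{(N)}(-1/N\tau) = -N\tau^2\,\mathsf{E}_2^{(N)}(\tau)$ is the one neighboring statement the paper does prove (Proposition \ref{E2-antiinvariant}), and its proof there runs on the same logarithmic-derivative mechanism, via Lemma \ref{nu-lemma}, that you use to set up $\mathsf{E}_2^{(N)}$ in the first place.
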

\begin{proposition}\label{E2-antiinvariant}
	The modular form $\mathsf{E}_2^{(N)}(\tau)$ is anti-invariant under the Fricke involution. That is, it transforms under the Fricke involution by the functional equation 
	\begin{align*}
		\mathsf{E}_2^{(N)}(-1/N\tau) = -N\tau^2E_2^{(N)}(\tau).
	\end{align*}
\end{proposition}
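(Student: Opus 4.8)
The plan is to reduce the claimed functional equation to the transformation laws for $\nu(\tau) = \eta(N\tau)/\eta(\tau)$ already recorded in Lemma \ref{nu-lemma}, since $\mathsf{E}_2^{(N)}$ is essentially the logarithmic derivative of $\nu$. First I would rewrite the defining expression in the variable $\tau$ rather than $q$. Because $q = e^{2\pi i \tau}$ gives $q\,\frac{d}{dq} = \frac{1}{2\pi i}\frac{d}{d\tau}$, and since the symbol $\eta(q^N)/\eta(q)$ in the definition is shorthand for $\nu(\tau) = \eta(N\tau)/\eta(\tau)$, the definition becomes
\[
\mathsf{E}_2^{(N)}(\tau) = \frac{1}{2\pi i}\,\frac{d}{d\tau}\log\nu(\tau) = \frac{1}{2\pi i}\,\frac{\nu'(\tau)}{\nu(\tau)}.
\]
This identity holds with $\tau$ replaced by any argument, so in particular $\mathsf{E}_2^{(N)}(-1/N\tau) = \frac{1}{2\pi i}\,\frac{\nu'(-1/N\tau)}{\nu(-1/N\tau)}$.

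Next I would substitute the two formulas from Lemma \ref{nu-lemma}. Part (a) gives $\nu(-1/N\tau) = \tfrac{1}{\sqrt{N}\,\nu(\tau)}$, and part (b) gives $\nu'(-1/N\tau) = -\sqrt{N}\,\tau^2\,\nu'(\tau)/\nu(\tau)^2$. Forming the quotient, the two factors of $\sqrt{N}$ combine to $N$ and one power of $\nu(\tau)$ cancels, leaving
\[
\frac{\nu'(-1/N\tau)}{\nu(-1/N\tau)} = -N\tau^2\,\frac{\nu'(\tau)}{\nu(\tau)}.
\]
Dividing by $2\pi i$ then yields exactly $-N\tau^2\,\mathsf{E}_2^{(N)}(\tau)$, as desired. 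Once Lemma \ref{nu-lemma} is in hand the whole computation is a single division.

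There is no serious obstacle here: the genuine analytic content, namely the appearance of $\sqrt{N}$ and of the factor $\tau^2$, was already isolated in the proof of Lemma \ref{nu-lemma}, which in turn rests only on the functional equation $\eta(-1/\tau)=\sqrt{-i\tau}\,\eta(\tau)$. The only points demanding care are bookkeeping ones: one must track the constant $\frac{1}{2\pi i}$ and confirm it cancels cleanly (it does, since it multiplies both sides identically), and one must read $\eta(q^N)$ as $\eta(N\tau)$ rather than as a literal substitution into $\eta$. As a sanity check one could instead write $\mathsf{E}_2^{(N)}(\tau) = \tfrac{1}{24}\bigl(N E_2(N\tau) - E_2(\tau)\bigr)$ in terms of the quasimodular weight-$2$ Eisenstein series $E_2$ and verify that the non-holomorphic anomalies in the transformations of $E_2(\tau)$ and $E_2(N\tau)$ cancel in this particular combination; but the logarithmic-derivative route via Lemma \ref{nu-lemma} is shorter and sidesteps the quasimodular correction term entirely.
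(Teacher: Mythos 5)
Your proposal is correct and follows essentially the same route as the paper's own proof: both rewrite $\mathsf{E}_2^{(N)}$ as $\frac{1}{2\pi i}\,\nu'(\tau)/\nu(\tau)$ via the chain rule and then apply parts (a) and (b) of Lemma \ref{nu-lemma} to the quotient, obtaining the factor $-N\tau^2$. The only difference is cosmetic: you also sketch an alternative sanity check via the quasimodular series $E_2$, which the paper does not include.
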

\begin{proof}
	Write $\tau = \frac{\log q}{2\pi i}$. Then by the chain rule $\frac{d}{dq} = \frac{1}{2\pi i q} \frac{d}{d\tau}$ as differential operators. We use this to define $\mathsf{E}_2^{(N)}(\tau)$ entirely in terms of $\tau$, eliminating the variable $q$:
	\begin{align*}
		\mathsf{E}_2^{(N)}(\tau) &= \frac{1}{2\pi i} \frac{d}{d\tau} \log\left(\frac{\eta(N\tau)}{\eta(\tau)}\right).
	\end{align*}
	Write $\nu(\tau) = \frac{\eta(N\tau)}{\eta(\tau)}$. Taking logarithmic derivatives, this is
	\begin{align*}
		\mathsf{E}_2^{(N)}(\tau) &= \frac{1}{2\pi i} \frac{\nu'(\tau)}{\nu(\tau)}.
	\end{align*}
Apply Lemma \ref{nu-lemma} to get
	\begin{align*}
		w_N^*\frac{\nu'(\tau)}{\nu(\tau)} &= -\frac{\sqrt{N}\tau^2\frac{\nu'(\tau)}{\nu(\tau)^2}}{\frac{1}{\sqrt{N} \nu(\tau)}}\\
		&= -N \tau^2 \cdot \frac{\nu'(\tau)}{\nu(\tau)},
	\end{align*}
	and we conclude
	\begin{align*}
		w_N^* \mathsf{E}_2^{(N)}(\tau) = -N\tau^2 \mathsf{E}_2^{(N)}.
	\end{align*}

\end{proof}

\subsection{$K$-rational points on modular curves} \label{K-rational}

When defining elliptic curves and isogenies algebraically, we have a notion of a field of definition $K$. Since modular curves parameterize elliptic curves, we would like to similarly have a notion of $K$-rational functions and $K$-rational points that correspond to enhanced elliptic curves defined over $K$. We focus only on the modular curves $X_0(N)$, since these are our primary interest.

\defn{The set of \emph{$K$-rational functions} $K(X_0(N))$ on $X_0(N)$ consists of the modular functions on $X_0(N)$ having $q$-expansions with coefficients in $K$.}

\defn{The set of \emph{$K$-rational points $X_0(N)(K)$} on $X_0(N)$ are those corresponding to cyclic $N$-isogenies defined over $K$.}

It is not obvious that these two definitions are compatible; we would rather treat $X_0(N)$ as a projective curve in order to translate the transcendental language of $q$-expansions to the rational language of function fields in algebraic geometry. Fortunately, this is always possible.   
\begin{theorem}{\emph{[\cite{stepanov}, Theorem 8.10].}}
There exists an irreducible integer polynomial $\Phi_N(x, y)$, known as the modular polynomial, with vanishing set parameterized by $x = j(\tau), y = j(N\tau)$ over $\tau \in X_0(N)$.
\end{theorem}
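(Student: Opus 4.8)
The plan is to construct $\Phi_N$ as the minimal polynomial of the modular function $j(N\tau)$ over the field $\C(j(\tau))$ and then to control its coefficients. The guiding observation is that $j(\tau)$ is invariant under all of $\SL_2(\Z)$, whereas $j(N\tau)$ is invariant only under $\Gamma_0(N)$; thus both descend to rational functions on $X_0(N)$, and the forgetful projection $X_0(N) \to X(1)$ has degree $d = [\SL_2(\Z) : \Gamma_0(N)]$. First I would fix a set of representatives for the left action of $\SL_2(\Z)$ on the primitive integer matrices of determinant $N$; in Hermite normal form these are the matrices $\alpha = \left(\begin{smallmatrix} a & b \\ 0 & d \end{smallmatrix}\right)$ with $ad = N$, $0 \le b < d$, and $\gcd(a, b, d) = 1$, and a standard count shows there are exactly $d$ of them. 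Crucially, $N\tau = \left(\begin{smallmatrix} N & 0 \\ 0 & 1 \end{smallmatrix}\right)\tau$ is one of these.

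Next I would form the monic polynomial
$$F(X, \tau) = \prod_\alpha \bigl(X - j(\alpha\tau)\bigr),$$
whose coefficients are the elementary symmetric functions of the $j(\alpha\tau)$. Because left multiplication by any $\gamma \in \SL_2(\Z)$ merely permutes the cosets $\SL_2(\Z)\alpha$ and $j$ is $\SL_2(\Z)$-invariant, these coefficients are $\SL_2(\Z)$-invariant modular functions, holomorphic on $\mathbb{H}$ and meromorphic at the cusp. Since $j(\tau)$ is a Hauptmodul for $X(1)$, each such coefficient is a polynomial in $j(\tau)$, so $F(X, \tau) = \Phi_N(X, j(\tau))$ for a polynomial $\Phi_N(X, Y) \in \C[X, Y]$. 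Because $\left(\begin{smallmatrix} N & 0 \\ 0 & 1 \end{smallmatrix}\right)$ is among the $\alpha$, substituting $X = j(N\tau)$ kills the product, giving the sought relation $\Phi_N(j(N\tau), j(\tau)) = 0$; since $\Phi_N$ turns out to be symmetric in its two arguments, the roles of $j(\tau)$ and $j(N\tau)$ are interchangeable, matching the parametrization $x = j(\tau),\, y = j(N\tau)$.

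It remains to establish integrality and irreducibility. For integrality I would compute the fractional $q$-expansions $j(\alpha\tau) = \zeta_N^{-ab} q^{-a^2/N} + \cdots$ from $j(q) = q^{-1} + 744 + \cdots$; their coefficients lie in $\Z[\zeta_N]$, hence so do the symmetric functions forming the coefficients of $F$. Since $\Gal(\Q(\zeta_N)/\Q)$ only permutes the $j(\alpha\tau)$ among themselves, these coefficients are Galois-fixed, hence rational; being simultaneously algebraic integers, they lie in $\Z$, and matching $q$-expansions forces $\Phi_N \in \Z[X, Y]$. For irreducibility I would use that $X_0(N) = \mathbb{H}^*/\Gamma_0(N)$ is connected, so the cover $X_0(N) \to X(1)$ is connected and its monodromy acts transitively on the $d$ sheets; equivalently, the roots $j(\alpha\tau)$ form a single Galois orbit over $\C(j(\tau))$, so $F$ is the irreducible minimal polynomial of $j(N\tau)$ and $\Phi_N$ is irreducible.

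The step I expect to be the main obstacle is integrality: it demands careful bookkeeping of the branched $q$-expansions of the $j(\alpha\tau)$ on the ramified cover, together with a clean argument that the symmetric-function coefficients are both stable under $\Gal(\Q(\zeta_N)/\Q)$ and integral, rather than merely algebraic. By contrast, irreducibility is essentially repackaged connectedness of $X_0(N)$, which the moduli interpretation of Theorem \ref{modular-correspondence} already furnishes, and the passage from symmetric functions to a polynomial in $j$ is immediate once one invokes the Hauptmodul property.
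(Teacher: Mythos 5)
The paper offers no proof of this statement at all: the bracketed citation to Stepanov, Theorem 8.10, \emph{is} the entire proof, so your proposal can only be measured against the standard argument in the literature, and it matches it. The Hermite-form representatives $\alpha$ for the left $\SL_2(\Z)$-classes of primitive integral matrices of determinant $N$, the symmetric-function/Hauptmodul step producing $\prod_\alpha \bigl(X - j(\alpha\tau)\bigr) = \Phi_N(X, j(\tau))$, the $\Z[\zeta_N]$-coefficient and Galois-fixedness argument for integrality, and transitivity of the sheet-permutation action (connectedness of $X_0(N)$) for irreducibility are precisely the classical ingredients, and each step you sketch is sound. Two touch-ups. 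First, the clause ``since $\Phi_N$ turns out to be symmetric'' is doing illegitimate work: symmetry of $\Phi_N$ in its two variables is a further theorem you have not proved, and you do not need it. Your construction gives $\Phi_N(j(N\tau), j(\tau)) = 0$ for all $\tau$; substituting $\tau \mapsto -1/(N\tau)$ and using $\SL_2(\Z)$-invariance of $j$ (so that $j(N \cdot (-1/N\tau)) = j(\tau)$ and $j(-1/N\tau) = j(N\tau)$) turns this identity into $\Phi_N(j(\tau), j(N\tau)) = 0$, which is exactly the stated parametrization with no appeal to symmetry. Second, ``vanishing set parameterized by $x = j(\tau),\, y = j(N\tau)$'' asserts that the zero locus of $\Phi_N$ \emph{equals} the closure of the image of $\tau \mapsto (j(\tau), j(N\tau))$, not merely that it contains it; this does follow from your irreducibility step, since an irreducible plane curve containing a one-dimensional irreducible image must coincide with that image's Zariski closure, but it deserves one explicit sentence rather than being left implicit.
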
 

This is similar to the map between a complex elliptic curve $\C/\Lambda$ to an algebraic elliptic curve in the sense that it translates between a Riemann surface and an affine model. The affine version of the modular curve $X_0(N)$ is sometimes called the ``classical modular curve.'' Every point $\tau \in Y_0(N)$ corresponding to a $K$-rational cyclic $N$-isogeny $E \to E'$ maps to a $K$-rational point on the classical modular curve, since $j(\tau) = j(E) \in K$ and $j(N\tau) = j(-1/N\tau) = j(E') \in K$. 

Note that the Fricke involution $w_N^*$ swaps the coordinates $x$ and $y$, so that it is always a rational automorphism of order $2$ on the classical modular curve over any field of definition. It can be shown that the other Atkin-Lehner involutions are also rational automorphisms of order $2$. Thus, when forming the spaces $X_0^+(N) = X_0(N)/\{1, w_N\}$ and $X^*(N) = X_0(N)/W(N)$, we may treat these quotient spaces algebraically. This verifies that the natural maps $X_0(N) \to X_0^+$ and $X_0(N) \to X^*(N)$ are $K$-rational for any field $K$, since the automorphisms defining the quotient are $K$-rational. A $K$-rational point on $X_0^+(N)$ must have fiber $\mathcal{P} \subset X_0(N)$ stable under $\Gal(\conj{K}/K)$. Since the map $X_0(N) \to X_0^+$ has degree $2$, we conclude that either $\mathcal{P}$ consists of one or two points defined over $K$, or it consists of two points $P, P'$ defined over some quadratic extension $L/K$ with $P, P'$ related by Galois conjugation. Similarly, the fiber in $X_0(N)$ of a $K$-rational point on $X^*(N)$ consists of points lying in some multi-quadratic extension of $K$ since $W(N) \cong (\Z/2\Z)^{\omega(N)}$, and this fiber is stable under the Galois action $\Gal(\conj{K}/K)$; we will return to this idea in our discussion of $K$-curves. 

One issue with treating $X_0(N)$ as a curve over $K$ is that points of $X_0(N)$ correspond to cyclic $N$-isogenies up to $\C$-isomorphism, \emph{not} up to $K$-isomorphism. In general, a $\C$-isomorphism class of isogenies will split into infinitely many $K$-isomorphism classes of isogenies related by twists; as long as we avoid $j \in \{0, 1728\}$, these will be quadratic twists. Therefore, to a given $K$-rational point $\tau \in X_0(N)$, there is no canonical association with a particular $K$-isomorphism class of affine elliptic curves $E \to E'$ related by a cyclic $N$-isogeny in the sense of Theorem \ref{modular-correspondence}. However, once a choice of twist for $E$ is fixed, this determines the twist of $E'$ and the isogeny. 

\subsection{The Tate curve}\label{Tate-curve-section}

Some computations involving modular curves and modular forms can be reduced to linear algebra on $q$-expansions. For example, this is one way to prove the Uniformization Theorem \ref{Uniformization}. It is therefore useful when considering the model of an affine elliptic curve to treat it as a curve with coefficients given by $q$-series; the Tate curve does precisely this. 

Let $K$ be a number field. Instead of treating an elliptic curve $E/K$ as a projective plane curve with coefficients in $K$, we may instead apply the Uniformization Theorem to treat all elliptic curves over number fields as specializations of a plane curve defined over $\C((q))$, known as the \emph{Tate curve}: 
\begin{align*}
 	\mathbb{G}/q^{\Z}: y^2 = 4x^3 - 60G_4(q) x - 140G_6(q).
\end{align*}
We may apply a change of variables to transform the Eisenstein series $G_4(q), G_6(q)$ into their normalized versions $\mathsf{E}_4(q), \mathsf{E}_6(q)$ to show that $\mathbb{G}/q^{\Z}$ is actually defined over $\Q((q))$, with model
\begin{align*}
	\mathbb{G}/q^{\Z}: y^2 = x^3 - \frac{\mathsf{E}_4}{48}x + \frac{\mathsf{E}_6}{864}. 
\end{align*}
See [\cite{MR1486831}, Equation (32)].

One benefit of viewing elliptic curves from the perspective of the Tate curve is that the Weierstrass coefficients can be treated as modular functions. For any modular curve $X$, the coefficients $-\frac{\mathsf{E}_4}{48}$ and $\frac{\mathsf{E}_6}{864}$ are modular forms on $X$ of weight $4$ and $6$ respectively. However, if $\lambda$ is a modular form of weight $2$ on $X$, then the change of variables $(x, y) \mapsto (\lambda x, \lambda^{3/2}y)$ twists the Tate curve into the curve 
\begin{align}\label{critical-interpretation}
	\mathbb{G}/q^{\Z}: y^2 = x^3 - \frac{\mathsf{E}_4}{48\lambda^2}x + \frac{\mathsf{E}_6}{864 \lambda^3}.
\end{align}
 Now the coefficients
\begin{align*}
 	a_4 = -\frac{\mathsf{E}_4}{48\lambda^2}, \;\;\;\;\;\;\; a_6 = \frac{\mathsf{E}_6}{864\lambda^3}
 \end{align*} 
may be treated as modular \emph{functions} on $X$. In this model, the coordinate functions $x, y$ may be given as functions of $\tau$ and $z \in \C/\Lambda_\tau$ based on the Weierstrass $\wp$-function, and indeed as functions of $q = \exp(2\pi i \tau)$ and $q_z = \exp(2\pi i z)$. More precisely, we have
\begin{align}\label{x-formula}
	x &= \lambda^{-1} \left[\frac{1}{2} - 2\sum_{n = 1}^\infty \frac{q^n}{(1 - q^n)^2} + \sum_{n = -\infty}^\infty \frac{q^n q_z}{(1 - q^nq_z)^2}\right],\\
	\label{y-formula}
	y &= \frac{1}{2} \lambda^{-3/2} \sum_{n = -\infty}^\infty \frac{(q^nq_z)^2 + q^nq_z}{(1 - q^nq_z)^3}.
\end{align}
Again, see [\cite{MR1486831}, Equation (31)]. 

 We will discuss explicit parameterizations of the function field $K(X_0(N))$ in Section \ref{section-3} when the modular curve $X_0(N)$ has genus $0$, and so we will be able to explicitly identify the coefficients $a_4, a_6$ in the model (\ref{critical-interpretation}) in terms of this parameterization. Taking $\lambda = \mathsf{E}_2^{(N)}$ will be a useful choice of weight $2$ modular form on $X_0(N)$ since $\mathsf{E}_2^{(N)}$ is anti-invariant under the Fricke involution by Proposition \ref{E2-antiinvariant}. This model will play a critical role in the proof of the Main Theorem \ref{main-theorem} on strict $K$-curves over fibers of $X_0^+(N)$. 

\section{Rational parameterizations of genus 0 modular curves}\label{section-3}
\subsection{A Hauptmodul of $X_0(N)$ as an eta product}
Modular curves have parameterizations that give rational expressions for various quantities of interest, such as the $j$-invariant or the coefficients of the corresponding elliptic curve. This is especially true for modular curves of genus $0$: rational functions involving Hauptmoduln are effective computational tools. 

\begin{proposition}\label{N-genus-0}
	The modular curve $X_0(N)$ has genus $0$ if and only if $N$ is one of the following $15$ integers: $$1, 2, 3, 4, 5, 6, 7, 8, 9, 10, 12, 13, 16, 18, 25.$$ 
\end{proposition}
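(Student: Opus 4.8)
The plan is to compute the genus of $X_0(N)$ as a function of $N$ using the standard genus formula derived from the Riemann--Hurwitz theorem applied to the covering map $X_0(N) \to X(1)$, and then to determine exactly which $N$ make this genus zero. Since $X(1)$ has genus $0$, the genus of $X_0(N)$ is controlled by the degree of the covering together with the ramification data over the three special points of $X(1)$: the elliptic points of order $2$ and $3$ (corresponding to $j = 1728$ and $j = 0$) and the cusp at $\infty$.

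First I would record the relevant arithmetic quantities. The degree of the map $X_0(N) \to X(1)$ equals the index $d = [\SL_2(\Z) : \Gamma_0(N)] = N \prod_{p \mid N}(1 + 1/p)$. I would then write down the standard counts: the number $\varepsilon_2$ of elliptic points of order $2$, given by $0$ if $4 \mid N$ and otherwise $\prod_{p \mid N}\left(1 + \left(\frac{-1}{p}\right)\right)$; the number $\varepsilon_3$ of elliptic points of order $3$, given by $0$ if $9 \mid N$ and otherwise $\prod_{p \mid N}\left(1 + \left(\frac{-3}{p}\right)\right)$; and the number $\varepsilon_\infty$ of cusps, given by $\sum_{d \mid N} \phi(\gcd(d, N/d))$. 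With these in hand, Riemann--Hurwitz yields the closed form
\begin{align*}
g(X_0(N)) = 1 + \frac{d}{12} - \frac{\varepsilon_2}{4} - \frac{\varepsilon_3}{3} - \frac{\varepsilon_\infty}{2}.
\end{align*}
I would cite this formula from a standard reference (for instance \cite{MR2112196}) rather than rederiving the ramification counts, since each count is itself a separate lemma about the action of $\Gamma_0(N)$.

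The remaining task is purely computational: evaluate $g(X_0(N))$ and isolate the $N$ with $g = 0$. The key structural observation that makes this finite is that $d/12$ grows roughly linearly in $N$ while the correction terms $\varepsilon_2/4 + \varepsilon_3/3 + \varepsilon_\infty/2$ grow much more slowly (the $\varepsilon$'s are bounded by small powers related to the number of prime divisors and the number of cusps is $O(d^{1/2}\log\log d)$-ish but in any case $o(d)$), so $g(X_0(N)) \to \infty$. Hence only finitely many $N$ can give genus $0$, and an explicit bound on $N$ reduces the verification to a finite check. I would establish a crude inequality showing $g \geq 1$ for all $N$ beyond some explicit cutoff, then tabulate $g(X_0(N))$ for $N$ below the cutoff and read off the fifteen values $1,2,3,4,5,6,7,8,9,10,12,13,16,18,25$.

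The main obstacle is not any single deep idea but rather making the finiteness argument rigorous and the finite check clean. Specifically, the delicate point is proving a sharp enough lower bound on $d/12 - \varepsilon_2/4 - \varepsilon_3/3 - \varepsilon_\infty/2$ to guarantee no genus-$0$ curves are missed beyond the cutoff; the cusp term $\varepsilon_\infty$ is the most irregular and requires care to bound uniformly. I would handle this by bounding $\varepsilon_2, \varepsilon_3 \leq 2^{\omega(N)}$ and controlling $\varepsilon_\infty$ in terms of the divisor function, then checking that the dominant $d/12$ term overwhelms these for all $N$ above a modest threshold. Given how classical this proposition is, I would most likely simply cite the complete tabulation (it appears in \cite{MR2112196} and is standard) and present the genus formula as the conceptual heart of the argument, relegating the arithmetic verification to the stated references.
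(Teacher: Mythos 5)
Your proposal is correct and takes essentially the same route as the paper: both invoke the genus formula $g = 1 + \frac{d}{12} - \frac{\varepsilon_2}{4} - \frac{\varepsilon_3}{3} - \frac{\varepsilon_\infty}{2}$ with the standard counts of elliptic points and cusps for $\Gamma_0(N)$, cite \cite{MR2112196} for these, and reduce the statement to a finite verification. Your additional sketch of an explicit growth bound forcing $g \geq 1$ for large $N$ is a welcome rigorization of the finiteness step, which the paper leaves implicit in its citation of the tabulated data.
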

\begin{proof}
	One can prove this by using the formula [\cite{MR2112196}, Theorem 3.1.1] for the genus of a modular curve $X(\Gamma)$
	\begin{align*}
		g = 1 + \frac{d}{12} - \frac{\varepsilon_2}{4} - \frac{\varepsilon_3}{3} - \frac{\varepsilon_\infty}{2},
	\end{align*}
	where $d$ is the degree of the map $X(\Gamma) \to X(1)$, the quantities $\varepsilon_2$ and $\varepsilon_3$ are the number of elliptic points of period $2$ and $3$ on $X(\Gamma)$, and $\varepsilon_\infty$ is the number of cusps on $X(\Gamma)$. In the case $X(\Gamma) = X_0(N)$, we have
	\begin{align*}
		d &= N\prod_{p \mid N} \left(1 + \frac{1}{p}\right),\\
		\varepsilon_2 &= \begin{cases}
			\prod_{p \mid N} \left(1 + \left(\frac{-1}{p}\right)\right): &4\nmid N\\
			0 : &4\mid N,
		\end{cases}\\
		\varepsilon_3 &= \begin{cases}
			\prod_{p \mid N} \left(1 + \left(\frac{-3}{p}\right)\right): &9\nmid N\\
			0: &9 \mid N,
		\end{cases}\\
		\varepsilon_\infty &= \sum_{\delta \mid N} \varphi(\gcd(\delta, N/\delta)),
	\end{align*}
	where 
	\begin{itemize}
		\item $p$ denotes a prime divisor of $N$;
		\item  $\delta$ denotes an arbitrary positive divisor of $N$;
		\item $\left(\frac{-1}{p}\right)$ and $\left(\frac{-3}{p}\right)$ are Legendre symbols, with $ \left(\frac{-3}{3}\right)$ defined to be $0$; and
		\item $\varphi$ is the Euler totient function.
	\end{itemize}
	See [\cite{MR2112196}, Figure 3.3]. 
\end{proof}

The curve $X_0(1) = X(1)$ has the $j$-invariant as a Hauptmodul. We would like to explicitly compute a Hauptmodul for the other $14$ values of $N$. Eta products provide a convenient method of doing so; these are products of the form
\begin{align*}
	 \prod_{\delta \mid N} \eta(\delta \tau)^{r_\delta}
\end{align*}
with each $r_\delta$ an integer. In order for an eta product to be a Hauptmodul, we recall the normalization conditions from Definition \ref{haupt-def} and require that
\begin{enumerate}
 	\item The Hauptmodul $h_N$ has a simple zero at the cusp $\tau = 0$ and a simple pole at the cusp $\tau = \infty$; and
 	\item The $q$-expansion of $h_N$ is of the form $$1 \cdot q^{-1} + O(1).$$
\end{enumerate}

Note that the first condition is possible because $0$ is always inequivalent to $\infty$ under $\Gamma_0(N)$ as long as $N > 1$. Some authors, such as \cite{MR2514149}, swap our $h_N$ with $w_N^*h_{N}$; under our definitions, the function $w_N^*h_{N}$ is not a Hauptmodul since it does not have its pole at $\infty$, but it is an alternative rational coordinate on $X_0(N)$. Some authors also refer to ``eta products'' as ``eta quotients.''

We regard the eta function as vanishing to order $1/24$ at $\infty$ since $\eta^{24}$ is a constant multiple of the modular discriminant $\Delta$, which has a simple zero at $\infty$. Additionally, since $\Delta$ has no zeros or poles on $\mathbb{H}$, neither does the eta function. This leads us to the \emph{Ligozat conditions}, which give number-theoretic conditions on the exponents $r_\delta$ to ensure that the eta product is a modular function:

\begin{proposition}
	\emph{[\cite{MR0422158}, Proposition 3.2.1].} The function $\prod_{\delta \mid N} \eta(\delta \tau)^{r_\delta}$ defines a modular function on $X_0(N)$ if and only if the following four conditions hold:
	\begin{enumerate}[(a)]
		\item $\sum_{\delta \mid N} r_\delta \cdot \delta \equiv 0\mod 24,$
		\item $\sum_{\delta \mid N} r_\delta \cdot (N/\delta) \equiv 0 \mod 24,$
		\item $\sum_{\delta \mid N} r_\delta = 0,$
		\item $\prod_{\delta \mid N} (N/\delta)^{r_\delta} \in (\Q^\times)^2$.
	\end{enumerate}

\end{proposition}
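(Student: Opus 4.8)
The plan is to reduce the statement to the transformation behavior of the Dedekind eta function, treating the four conditions as the exact arithmetic constraints needed to make an eta product weight $0$, invariant under $\Gamma_0(N)$, and meromorphic at the cusps. A modular function on $X_0(N)$ is a meromorphic function on $\mathbb{H}$ invariant under $\Gamma_0(N)$ and meromorphic at every cusp. Since $\eta$ is holomorphic and nowhere vanishing on $\mathbb{H}$ (because $\Delta = \eta^{24}$ is), the product $f(\tau) = \prod_{\delta \mid N}\eta(\delta\tau)^{r_\delta}$ is automatically holomorphic and nonvanishing there, so all the content lies in the transformation law and the cusps. I would work throughout from the general eta transformation $\eta(\gamma\tau) = \varepsilon(\gamma)(c\tau+d)^{1/2}\eta(\tau)$ for $\gamma = \begin{pmatrix} a&b\\c&d\end{pmatrix}\in\SL_2(\Z)$, where $\varepsilon(\gamma)$ is a $24$th root of unity given through a Dedekind sum; this formula follows from the two functional equations for $\eta(\tau+1)$ and $\eta(-1/\tau)$ stated above, since the corresponding matrices $T$ and $S$ generate $\SL_2(\Z)$.

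First I would pin down the weight and the cusp exponents, which account for conditions (a), (b), and (c). Each factor contributes weight $1/2$, so $f$ transforms with automorphy factor $(c\tau+d)^k$ for $k = \tfrac12\sum_\delta r_\delta$; requiring weight $0$ forces $\sum_\delta r_\delta = 0$, which is condition (c). For the cusp at $\infty$, the functional equation $\eta(\tau+1) = \exp(\pi i/12)\,\eta(\tau)$ gives $f(\tau+1) = \exp\!\big(\tfrac{\pi i}{12}\sum_\delta \delta r_\delta\big)f(\tau)$, and since $T \in \Gamma_0(N)$ this is invariance exactly when $\sum_\delta \delta r_\delta \equiv 0 \pmod{24}$, which is also the condition that the leading $q$-exponent $\tfrac{1}{24}\sum_\delta \delta r_\delta$ of $f$ be integral; this is condition (a). Transporting the cusp $0$ to $\infty$ with $\eta(-1/\tau) = \sqrt{-i\tau}\,\eta(\tau)$ and rescaling by $N$ yields leading exponent $\tfrac1{24}\sum_\delta (N/\delta)r_\delta$ in the local parameter there, whose integrality is condition (b). Thus (a), (b), (c) are precisely weight $0$ together with meromorphy at the two rational cusps.

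Next I would handle invariance under a general $\gamma = \begin{pmatrix} a&b\\c&d\end{pmatrix}\in\Gamma_0(N)$ with $c\neq 0$, which is where condition (d) is born. Since $N\mid c$ and $\delta\mid N$, one has $\delta\mid c$, so $\delta\gamma\tau = \gamma'(\delta\tau)$ with $\gamma' = \begin{pmatrix} a & b\delta\\ c/\delta & d\end{pmatrix}\in\SL_2(\Z)$; applying the eta transformation to each factor gives $f(\gamma\tau) = (c\tau+d)^k\big(\prod_\delta \varepsilon(\gamma')^{r_\delta}\big)f(\tau)$. Granting (a), (b), (c), the weight factor is trivial and the product of root-of-unity multipliers collapses, after the Dedekind-sum terms cancel modulo $24$, to a single quadratic character $\chi(d) = \left(\tfrac{s}{d}\right)$ given by a Kronecker symbol with $s = \prod_\delta \delta^{\,r_\delta}$. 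Genuine $\Gamma_0(N)$-invariance then means $\chi$ is trivial, i.e. $s$ is a square in $\Q^\times$; and since $\sum_\delta r_\delta = 0$ gives $\prod_\delta (N/\delta)^{r_\delta} = N^{\sum_\delta r_\delta}\,s^{-1} = s^{-1}$, this is equivalent to $\prod_\delta (N/\delta)^{r_\delta}\in(\Q^\times)^2$, which is condition (d). Each condition is also necessary, since dropping it reinstates a nontrivial weight, a fractional cusp exponent, or a nontrivial character.

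I expect the main obstacle to be the multiplier bookkeeping in the last step: verifying that, under conditions (a) and (b), the Dedekind-sum contributions in $\prod_\delta \varepsilon(\gamma')^{r_\delta}$ cancel modulo $24$ and leave exactly the Kronecker symbol $\left(\tfrac{s}{d}\right)$, independent of $\tau$ and of the choice of $\gamma'$ lifts. This is the genuinely computational heart of the argument and the point at which following \cite{MR0422158} (or Newman's classical treatment) most closely would be worthwhile.
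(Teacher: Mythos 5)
The first thing to say is that the paper does not prove this proposition at all: it is quoted verbatim from Ligozat [\cite{MR0422158}, Proposition 3.2.1] and used as a black box, so there is no internal proof to compare yours against. Judged on its own, your outline follows the standard route (the one Ligozat and Newman themselves take): condition (c) is weight $0$; condition (a) is invariance under $T$, equivalently integrality of the $q$-exponent at $\infty$; condition (b) is the corresponding integrality at the cusp $0$; and condition (d) is triviality of the residual quadratic character, where your identity $\prod_{\delta}(N/\delta)^{r_\delta} = s^{-1}$ under (c) is correct. The conjugation step $\delta\gamma\tau = \gamma'(\delta\tau)$ with $\gamma' = \begin{pmatrix} a & b\delta\\ c/\delta & d\end{pmatrix} \in \SL_2(\Z)$ is also correct, and it is the right way to reduce everything to the eta multiplier.

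There is nevertheless a genuine gap, and you have in effect named it yourself: the claim that under (a) and (b) the product $\prod_{\delta}\varepsilon(\gamma'_\delta)^{r_\delta}$ of $24$th-root-of-unity multipliers collapses to a single Kronecker symbol $\left(\tfrac{s}{d}\right)$ is not incidental bookkeeping --- it \emph{is} the content of Ligozat's proposition. The weight count, the two cusp-exponent conditions, and the passage from ``trivial character'' to condition (d) are elementary; all of the difficulty lives in the Dedekind-sum manipulation (reciprocity for $s(d,c)$ and identification of the resulting root of unity with a quadratic residue symbol), which you defer to the very reference being proved. As written, the argument establishes the proposition modulo the proposition. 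Two smaller points to address if you carry the computation out: (i) a modular function on $X_0(N)$ must be meromorphic at \emph{every} cusp, not only at $\infty$ and $0$; you should note that once full $\Gamma_0(N)$-invariance is in hand, meromorphy at the remaining cusps is automatic, since an eta product has finite rational order at each cusp and invariance under the cusp's stabilizer forces its transported expansion into integer powers of the local parameter. (ii) For the necessity of (d) you need that the character $d \mapsto \left(\tfrac{s}{d}\right)$ is trivial on the admissible values of $d$ only when $s$ is a square; this requires Dirichlet's theorem on primes in progressions or an explicit nonresidue argument, and deserves a sentence rather than being absorbed into ``dropping it reinstates a nontrivial character.''
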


Condition (a) corresponds to integrality of the order of vanishing at $\infty$, and condition (b) corresponds to integrality at $0$. However, we require more than integrality: we need the order at $\infty$ to be exactly $-1$ and the order at $0$ to be exactly $+1$. We also require that the eta product have zero order of vanishing at all other cusps. Let $\frac{a}{d} \in \Q$ be in lowest form. To find the order of vanishing of $\eta(\delta \tau)$ at $\frac{a}{d}$, we apply some ``cusp-normalizing map'' $\gamma \in \SL_2(\Z)$ that sends $\infty \mapsto \frac{a}{d}$, and we may use the functional equation of $\eta$ to verify that $\eta(\delta \tau)$ has order $\frac{1}{24} (\delta, d)^2/\delta$ at $\frac{a}{d}$. Here $(\cdot, \cdot)$ denotes the greatest common divisor.

To compute the order at the cusp $\left[\frac{a}{d}\right] \in X_0(N)$, we must multiply by the width $(N/d) \cdot (d, N/d)$ of this cusp. All cusps of $X_0(N)$ have a representative $\frac{a}{d}$ with $d$ a divisor of $N$, so this means that it suffices to impose the conditions
 \begin{align*}
 	(N/d) (d, N/d) \cdot \sum_{\delta \mid N}  r_\delta \cdot (\delta, d)^2/\delta = 0
 \end{align*}
 for each divisor $d \mid N$ other than $d = 1, N$. We include the factor of $(N/d)(d, N/d)$ so that the coefficient of each $r_\delta$ is an integer, but omitting it does not affect the condition. The divisors $1$ and $N$ correspond to the cusps at $0 \in \left[\frac{1}{1}\right]$ and $\infty \in \left[\frac{1}{N}\right]$ respectively, for which we have the conditions
 \begin{align*}
 	\sum_{\delta \mid N}  r_\delta \cdot (N/\delta) &= 24,\\
 	 \sum_{\delta \mid N}  r_\delta \cdot \delta &= -24.
 \end{align*} 
for order of vanishing $1$ and $-1$ respectively. We summarize this discussion as a proposition: 
\begin{proposition}\label{Hauptmodul-condition}
	An eta product $\prod_{\delta \mid N} \eta(\delta \tau)^{r_\delta}$ defines a Hauptmodul on the modular curve $X_0(N)$ if and only if the following conditions hold on the integer exponents $r_\delta$:
	\begin{enumerate}[(a)]
		\item $\sum_{\delta \mid N} r_\delta \cdot \delta = -24,$
		\item $
		\sum_{\delta \mid N} r_\delta \cdot (N/\delta) = 24,$
		\item $
		\sum_{\delta \mid N} r_\delta = 0,$
		\item 
		$\prod_{\delta \mid N} (N/\delta)^{r_\delta} \in (\Q^\times)^2,$
	\end{enumerate}
	and for all divisors $d \mid N$ besides $d = 1, N$, we have
	\begin{enumerate}[(e)]
		\item $
		(N/d) (d, N/d) \sum_{\delta \mid N}  r_\delta \cdot (\delta, d)^2/\delta = 0.$
	\end{enumerate}
\end{proposition}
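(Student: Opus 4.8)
The plan is to prove the equivalence by translating each requirement in the definition of a Hauptmodul (Definition \ref{haupt-def}) into the numerical conditions (a)--(e): that $h_N := \prod_{\delta \mid N} \eta(\delta\tau)^{r_\delta}$ be a modular function on $X_0(N)$, that its divisor be $(0) - (\infty)$ (a single simple zero at the cusp $0$ and a single simple pole at the cusp $\infty$), and that its $q$-expansion begin $q^{-1} + O(1)$. The ``modular function'' requirement is handled immediately by the preceding (Ligozat) proposition: our condition (a) is the exact equality $\sum_\delta r_\delta \delta = -24$, which in particular gives $\sum_\delta r_\delta \delta \equiv 0 \pmod{24}$; our (b) likewise implies the mod-$24$ congruence $\sum_\delta r_\delta (N/\delta) \equiv 0$; and our (c), (d) are literally conditions (c), (d) of that proposition. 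Hence (a)--(d) imply the four Ligozat conditions, so $h_N$ is a modular function on $X_0(N)$. In the converse direction, being a modular function forces exactly (c), (d) and the congruence forms of (a), (b), which are subsumed by the exact equalities once the orders are computed.

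Next I would pin down the divisor. Since $\eta$ is holomorphic and nonvanishing on $\BH$, the eta product $h_N$ has no zeros or poles in the interior, so its divisor is supported entirely on the cusps, each of which is represented by some $a/d$ with $d \mid N$. Using a cusp-normalizing matrix in $\SL_2(\Z)$ and the functional equations of $\eta$, the order of vanishing of $\eta(\delta\tau)$ at $a/d$ is $\tfrac{1}{24}(\delta,d)^2/\delta$; multiplying by the width $(N/d)(d, N/d)$ of the cusp $[a/d]$ shows that the order of $h_N$ at $[a/d]$ equals $\tfrac{1}{24}(N/d)(d,N/d)\sum_{\delta \mid N} r_\delta (\delta,d)^2/\delta$. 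I would then specialize $d$. For $d = N$ the width is $1$ and $(\delta,N)^2/\delta = \delta$, so the order at $\infty$ is $\tfrac{1}{24}\sum_\delta r_\delta\delta$, which equals $-1$ exactly when (a) holds. For $d = 1$ the width is $N$ and $(\delta,1)^2/\delta = 1/\delta$, so the order at $0$ is $\tfrac{1}{24}\sum_\delta r_\delta(N/\delta)$, which equals $+1$ exactly when (b) holds. For every remaining divisor $d$, the requirement that the order vanish is precisely condition (e), since the positive width factor may be dropped.

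With the divisor of $h_N$ equal to $(0) - (\infty)$, the function has a single simple pole, hence defines a degree-$1$ map $X_0(N) \to \mathbb{P}^1$; because $X_0(N)$ has genus $0$, this means $h_N$ generates the function field $\C(X_0(N)) = \C(h_N)$. The $q$-expansion normalization is then automatic: expanding $\eta(\delta\tau)^{r_\delta} = q^{\delta r_\delta/24}\prod_{n \geq 1}(1 - q^{\delta n})^{r_\delta}$ and taking the product over $\delta$ gives leading behavior $q^{\frac{1}{24}\sum_\delta r_\delta\delta}(1 + O(q)) = q^{-1} + O(1)$ by condition (a), with leading coefficient exactly $1$. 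This completes sufficiency, and reading the order computations backwards gives necessity, up to one caveat addressed below.

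I expect the main obstacle to be the order-of-vanishing computation for $\eta(\delta\tau)$ at a general cusp $a/d$: this requires choosing $\gamma \in \SL_2(\Z)$ with $\gamma\infty = a/d$ and carefully tracking the transformation of $\eta(\delta\tau)$ under $\gamma$ via the full $\SL_2(\Z)$ functional equation, whose multiplier is a root of unity and therefore affects only the leading constant and not the order, together with correctly inserting the cusp width $(N/d)(d,N/d)$ so that orders are measured on $X_0(N)$ rather than on $\BH^*$. I would also verify that every cusp of $X_0(N)$ admits a representative $a/d$ with $d \mid N$, so that conditions (a), (b), (e) genuinely exhaust all cusps. The remaining subtlety is that Definition \ref{haupt-def} fixes only the pole at $\infty$ and the normalization, so for necessity one must additionally invoke the setup's requirement that the simple zero sit at the cusp $0$ (rather than at some other cusp of a composite-level $X_0(N)$); granting this, the order formulas run in reverse to yield (a), (b), (e), and everything else is routine substitution.
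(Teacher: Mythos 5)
Your proposal is correct and takes essentially the same route as the paper: Ligozat's criterion to handle modularity, the order computation $\tfrac{1}{24}(\delta,d)^2/\delta$ for $\eta(\delta\tau)$ at a cusp $a/d$ scaled by the width $(N/d)(d,N/d)$, and specialization to $d = N$, $d = 1$, and the remaining divisors to obtain (a), (b), and (e) respectively. Your closing caveat about Definition \ref{haupt-def} fixing only the pole at $\infty$ is precisely the convention the paper builds in beforehand (its explicit requirement that the Hauptmodul have its simple zero at the cusp $0$), so your treatment of necessity is, if anything, slightly more careful than the paper's.
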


The linear conditions (a), (b), (c), and (e) in Proposition \ref{Hauptmodul-condition} are enough to uniquely specify the exponents $r_\delta$ when $N > 1$ is in the list given in Proposition \ref{N-genus-0}, and we may check that these solutions also satisfy condition (d). We denote the corresponding eta product by $h_N$, or by $h$ when the value of $N$ is clear. These eta products are listed in Table \ref{hauptmoduln}. In this table, we also describe the image of these Hauptmoduln under the Fricke involution. We always have $w_N^*h_N = \kappa_N/h_N$ for some constant $\kappa_N$, which can be readily verified using the functional equation for $\eta$.
\renewcommand{\arraystretch}{2.5}
\begin{figure}[h]
\begin{tabular}{|c||c|c|c|c|c|c|c|c|}
	\hline $N$ & 2 & 3 & 4 & 5 & 6 & 7 & 8 & 9\\
	\hline
	$h_N$ & $\left(\frac{\eta(\tau)}{\eta(2\tau)}\right)^{24}$ & $\left(\frac{\eta(\tau)}{\eta(3\tau)}\right)^{12}$ & $\left(\frac{\eta(\tau)}{\eta(4\tau)}\right)^8$ &  $ \left(\frac{\eta(\tau)}{\eta(5\tau)}\right)^{6}$ & $\frac{\eta^5(\tau) \eta(3\tau)}{\eta(2\tau)\eta^5(6\tau)}$ & $\left(\frac{\eta(\tau)}{\eta(7\tau)}\right)^4$ & $\frac{\eta^4(\tau)\eta^2(4\tau)}{\eta^2(2\tau)\eta^4(8\tau)}$ &$\left(\frac{\eta(\tau)}{\eta(9\tau)}\right)^3$\\
	\hline
	$w_N^*h_N$ & $2^{12}/h_2$ & $3^{6}/h_3$ & $2^{8}/h_4$ & $5^{3}/h_5$ & $2^3 \cdot 3^2 /h_6$ & $7^2/h_7$ & $2^5/h_8$ & $3^3/h_9$\\
	\hline
\end{tabular}

\begin{tabular}{|c||c|c|c|c|c|c|}
	\hline$N$ & 10 & 12 & 13 & 16 & 18 & 25 \\
	\hline
	$h_N$ & $\frac{\eta^3(\tau)\eta(5\tau)}{\eta(2\tau)\eta^3(10\tau)}$ & $\frac{\eta^3(\tau)\eta(4\tau) \eta^2(6\tau)}{\eta^2(2\tau)\eta(3\tau)\eta^3(12\tau)}$ & $\left(\frac{\eta(\tau)}{\eta(13\tau)}\right)^{2}$ & $\frac{\eta^2(\tau)\eta(8\tau)}{\eta(2\tau)\eta^2(16\tau)}$ & $\frac{\eta^2(\tau)\eta(6\tau)\eta(9\tau)}{\eta(2\tau)\eta(3\tau)\eta^2(18\tau)}$ & $\frac{\eta(\tau)}{\eta(25\tau)}$\\
	\hline $w^*_Nh_N$ & $20/h_{10}$ & $12/h_{12}$ & $13/h_{13}$ & $8/h_{16}$ & $6/h_{18}$ & $5/h_{25}$\\
	\hline
\end{tabular}
\caption{The Hauptmodul $h_N$ of $X_0(N)$ as an eta product and its image $w_N^*h_N$ under the Fricke involution.}
\label{hauptmoduln}
\end{figure}

\subsection{The $j$-invariant as a rational function of each Hauptmodul}

The degree of the map $X_0(N) \to X(1)$ is equal to the index of $\Gamma_0(N)/\{\pm I\}$ in $\SL_{2}(\Z)/\{\pm I\}$, which is
\begin{align*}
	\psi(N) := N \prod_{p | N} \left(1 + \frac{1}{p}\right). 
\end{align*}
Therefore, when $X_0(N)$ has genus $0$, there exists a degree $\psi(N)$ rational function giving the $j$-invariant in terms of the Hauptmodul for $X_0(N)$. Since the degree of the rational function is known, this expression may be found by using linear algebra to compare $q$-expansions. Explicitly, we have the following algorithm:
\renewcommand{\arraystretch}{1}
\begin{enumerate}
	\item Let $n = \psi(N)$, let $h = q^{-1} + O(1)$ be the Hauptmodul of $X_0(N)$, and let $H = 1/h$. Compute the inverse $$Q(H) + O(H^{2n + 3}) = q(H)$$ of the $q$-expansion of $H$ to at least $2n + 2$ terms. 
	\item Compute $$J(H) + O(H^{2n + 2}) := j(Q(H))$$ to express $j$ as a Laurent series $J(H) = H^{-1} + O(1)$ in $H$ correct to a precision of $2n + 1$ terms. Let the coefficient of $H^i$ in $J(H)$ be $a_i$. 
	\item Find a generator $(b_{n}, b_{n - 1}, \dots, b_0)$ of the $1$-dimensional kernel of the $(n + 2)\times (n + 2)$ matrix
	\begin{align*}
	\begin{pmatrix}
		1 & a_0 & a_1 &\dots & a_{n}\\
		a_0 & a_1 & a_2 &\dots & a_{n + 1}\\
		a_1 & a_2 & a_3 &\dots & a_{n + 2}\\
		\vdots & \vdots &\vdots & \ddots & \vdots\\
		a_{n} & a_{n + 1} &  a_{n + 2} & \dots & a_{2n + 1}
	\end{pmatrix}
		\end{align*}
	\item Compute 
	$$P(H) = J(H) \cdot (b_{n}H^{n} + b_{n - 1}H^{n + 1} + \dots + b_0).$$
	Then 
	\begin{align*}
		j = \frac{P(H)}{b_{n}H^{n} + b_{n - 1}H^{n + 1} + \dots + b_0}.
	\end{align*}
	 Substitute $1/h  = H$ to obtain $j$ as a rational function of $h$.
\end{enumerate}
As a ``sanity check,'' in practice we compute the series in the algorithm to a few more terms than required. We list in Table \ref{j-invar} the results up to $N = 5$; see [\cite{MR2514149}, \S 3.2] for a full table.
\renewcommand{\arraystretch}{2.5}
\begin{figure}[h]
\begin{center}
	\begin{tabular}{|c || c | c|}
		\hline $N$ & $j$ & $j - 1728$\\
		\hline $2$ & $\frac{(h + 256)^3}{h^2}$ & $\frac{(h + 64)(h - 512)^2}{h^2}$ \\
		\hline $3$ & $\frac{(h + 27)(h + 243)^3}{h^3}$ & $\frac{(h^2 - 486h - 19683)^2}{h^3}$ \\
		\hline $4$ & $\frac{(h^2 + 256h + 4096)^3}{h^4(h + 16)}$ & $\frac{(h + 32)^2 (h^2 - 512 - 8192)^2}{h^4(h + 16)}$\\
		\hline $5$ & $\frac{(h^2 + 250 h + 3125)^3}{h^5}$ & $\frac{(h^2 + 22h + 125)(h^2 - 500h - 15625)^2}{h^5}$ \\
		\hline
		\vdots & \vdots & \vdots
	\end{tabular}
	\end{center}
	\caption{Expressions of $j$ and $j - 1728$ in terms of the Hauptmodul $h = h_N$ up to $N = 5$.}
	\label{j-invar}
\end{figure}

The group $\SL_2(\Z)/\{\pm I\}$ acts freely on $\mathbb{H}^*$ except on the cusps and the \emph{elliptic points} given by the orbits of $e^{2\pi i/3}$ and $i$ corresponding to $j = 0$ and $j = 1728$ respectively. This implies that the only ramified values of the map $X_0(N) \to X(1)$ are given by $j = 0, 1728,$ and $\infty$. One benefit of computing the rational functions giving $j$ in terms of $h_N$ is that we may view the ramification behavior at $j = 0$ and $\infty$ at a glance. Similarly, we may view the ramification behavior at $j = 1728$ and $\infty$ from the rational function expressing $j - 1728$ in terms of $h_N$. For example, the expressions for $j$ and $j - 1728$ in terms of $h_2$ show that the ramification divisor of $X_0(2) \to X(1)$ is
\begin{align*}
	2[-256] + [512] + [0].
\end{align*}
with coordinates given by $h_2$. This agrees with the Riemann-Hurwitz formula for a degree $3$ map between genus $0$ Riemann surfaces.

As noted in Section \ref{K-rational}, the fixed points of the Fricke involution yield elliptic curves with complex multiplication. For each of the curves $X_0(N)$ of genus $0$, we found that $w_N^*h_N = \kappa_N/h_N$ for some constant $\kappa_N$ coming from the functional equation of the eta function, so we may immediately find the values of the Hauptmodul at the two fixed points. However, if we can independently determine these fixed points as elements $\tau \in \mathbb{H}$, we obtain the value of $h_N(\tau)$ explicitly at such $\tau$. In conjunction with the rational expression for the $j$-invariant, this allows us to find many special values of $j$ at certain CM points. 

The point $\tau = i/\sqrt{N}$ is always a fixed point of the Fricke involution $w_N$. We also know that $h_N \cdot w_N^*h_N = \kappa_N$ for some constant $\kappa_N$, so the fixed points in $h$-coordinates are given by $\pm \sqrt{\kappa_N}$.  When $\tau$ lies on the positive imaginary axis, the variable $q = \exp(2\pi i \tau)$ lies in the interval $(0, 1)$. Therefore the product $\prod_{n = 1}^\infty (1 - q^n)$ used in the definition of the eta function must be positive for $\tau$ on the positive imaginary axis. When writing a Hauptmodul as an eta product, the factors of $q^{1/24}$ from the eta function must cancel, so we conclude that $h_N$ is also positive when $\tau$ lies on the positive imaginary axis. Combining this with the expressions for the $j$-invariant from Table \ref{j-invar} (and its completion in [\cite{MR2514149}, \S 3.2]), we obtain $14$ special values of $j(\tau)$, which we list in Table \ref{special-j} in Appendix \ref{j-appendix}.

\subsection{Coefficients of elliptic curves as rational functions of Hauptmoduln}\label{section3.3}
Recall from our discussion of the Tate curve in Section \ref{Tate-curve-section} that we may explicitly find an affine model of the elliptic curve $E$ corresponding to $\tau \in X_0(N)$:
\begin{align} \label{E-model}
	E: y^2 = x^3 - \frac{\mathsf{E}_4(\tau)}{48 \lambda^2} + \frac{\mathsf{E}_6(\tau)}{864 \lambda^2}.
\end{align}
Here $\lambda$ is an arbitrary weight 2 modular form on $X_0(N)$. However, if we want to find the isogenous curve $E'$ associated to $\tau$, we must exercise caution. If $E$ is defined over $K$, then we would like the isogenous curve $E'$ to be isogenous to $E$ \emph{over} $K$ rather than merely over $\C$. Since $w_N(\tau) = -1/N\tau$, and $j(-1/N\tau) = j(N\tau)$, the isogenous curve $E'$ will have some model isomorphic to 
\begin{align} \label{Isogenous-model}
	E' : y^2 = x^3 - \frac{\mathsf{E}_4(N\tau)}{48 \lambda^2} + \frac{\mathsf{E}_6(N\tau)}{864 \lambda^3}
\end{align}
where we replace $\tau$ with $N\tau$ in the arguments of the normalized Eisenstein series, but not in the modular form $\lambda$. To prove that this is the correct twist corresponding to the chosen model (\ref{E-model}) of $E$, we note that by Theorem \ref{modular-correspondence}, the isogeny of complex curves corresponding to $\tau \in Y_0(N)$ is given explicitly by \begin{align*}\C/\Lambda_\tau &\to \C/\Lambda_{N\tau}\\
 z &\mapsto Nz.\end{align*} 
Therefore, on the Tate curve, the isogeny given on the coordinates $x, y$ by replacing $q$ with $q^N$ and $q_z$ with $q^{N}_z$ in formulas (\ref{x-formula}) and (\ref{y-formula}), and indeed these new coordinates lie on the model of $E'$ given by (\ref{Isogenous-model}). 

Taking $\lambda = \mathsf{E}_2^{(N)}(\tau)$, we can find an expression for the modular functions 
\begin{align*}
a_4 &:= -\frac{\mathsf{E}_4(\tau)}{48 \mathsf{E}_2^{(N)}(\tau)^2}, & a_6 &:= \frac{\mathsf{E}_3(\tau)}{864 \mathsf{E}_2^{(N)}(\tau)^3},\\
a'_4 &:= -\frac{\mathsf{E}_4(N\tau)}{48 \mathsf{E}_2^{(N)}(\tau)^2}, & a'_6 &:= \frac{\mathsf{E}_3(N\tau)}{864 \mathsf{E}_2^{(N)}(\tau)^3}
\end{align*}
in terms of the Hauptmodul $h_N$ when $X_0(N)$ has genus zero by comparing $q$-expansions, similar to how we found the rational functions giving $j$ in terms of each $h_N$. These give explicit expressions for the coefficients of the isogenous curves
\begin{align*}
	E: y^2 &= x^3 + a_4x + a_6\\
	E':y^2 &= x^3 + a_4'x + a_6'
\end{align*} in terms of the coordinate $h_N$. 
We give a complete set of results for all $$N \in\{ 2, 3, 4, 5, 6, 7, 8, 9, 10, 12, 13, 16, 18, 25\}$$ in Tables \ref{elliptic-coefficients-start} to \ref{eiliptic-coefficient-last} in the appendix. 

Finally, we include here some additional formulas for curves in special Weierstrass forms familiar from $2$- and $3$-isogeny descent. The curves 
\begin{align*}
 	E_2: y^2 = x^3 + a_2x^2 + a_4x, && E'_2 : y^2 = x^3 - 2a_2x^2 + (a_2^2 - 4a_4)x
 \end{align*} 
 are always $2$-isogenous, and the curves
 
 \begin{align*}
 	E_3: y^2 = x^3 + d(ax + b)^2 && E'_3: y^2 = x^3 -3d(ax + (27b -4a^3d)/9)^2
 \end{align*}
 are always $3$-isogenous in characteristic $0$; see [\cite{MR2514094}, Chapter X.1], and \cite{3iso} for more details on the role of these models in $2$- and $3$-isogeny descent, respectively. We have
 \begin{align*}
 	j(E_2) &= \frac{256(a_2^2 - 3a_4)^3}{(a_2^2a_4^2 - 4a_4^3)},\\
 	j(E_3) &= \frac{256(d^2a^4 - 6dab)^3}{4d^3a^3b^3 - 27d^2b^4}.
 \end{align*}
 Equating these $j$-invariants with the formula for the $j$-invariant in terms of Hauptmoduln from Table \ref{j-invar}
 \begin{align*}
 	j &= \frac{(h_2 + 256)^3}{h_2^2},\\
 	j &= \frac{(h_3 + 27)(h_3 + 243)^3}{h_3^3},
 \end{align*}
 we search for rational solutions for $h$. In both cases, we find only one rational solution given by
 \begin{align}\label{2-isogeny formula}
 	h_2 &= \frac{256 a_4}{a_2^2 - 4 a_4},\\
 	h_3 &= \frac{729b}{4a^3d - 27b}.\label{3-isogeny formula}
 \end{align}
 so in these models we explicitly find the points in $Y_0(2)$ and $Y_0(3)$ corresponding to the isogenies $E_2 \to E_2'$ and $E_3 \to E_3'$ respectively. Finally, we may invert expressions (\ref{2-isogeny formula}) and (\ref{3-isogeny formula}) to find that $E_2$ and $E_3$ correspond to the same points on $Y_0(2)$ and $Y_0(3)$ as
 \begin{align*}
 	\tilde{E}_2: y^2 &= x^3 + (4h_2 + 256)x^2 + h_2(4h_2 + 256)x,\\
 	\tilde{E}_3: y^2 &= x^3 + \frac{1}{4}((27h_3 + 729)x + h_3(27h_3 + 729)^2)^2,
 \end{align*}
 respectively, though $\tilde{E}_2$ might be a twist of $E_2$ and $\tilde{E}_3$ might be a twist of $E_3$.

\section{Elliptic $K$-curves}

\subsection{Introduction to $K$-curves}
Elliptic curves defined over $\Q$ have been extensively studied, and results such as Mazur's Theorem and the Modularity Theorem are some of the most celebrated in modern number theory. Therefore, one reasonable direction of study would be to find some generalization of elliptic curves over $\Q$ that would allow the extension of such known results. This has led to the study of elliptic $\Q$-curves, or more generally elliptic $K$-curves for any field $K$. 

\defn{Let $K$ be a field with separable closure $M$, and let $G = \Gal(M/K)$. An elliptic curve $E$ defined over $M$ is said to be a an \emph{elliptic $K$-curve}, or $K$-curve, if all $G$-conjugates of $E$ are isogenous to $E$ over $M$.
We let $L$ denote the field of definition of the $K$-curve $E$, so that $K \subseteq L \subseteq M$.}

\begin{example}\label{K-curve-ex}
	Let $E$ be the curve 
	\begin{align*}
	 	E: y^2 = x^3 - 2\sqrt{-2} \cdot x^2 + (-1 + 2\sqrt{-2})x.
	 \end{align*}
	 defined over $\Q(\sqrt{-2})$.
	 The curve $E$ is $2$-isogenous over $\Q(\sqrt{-2})$ to 
	 \begin{align*}
	 	E': y^2 = x^3 + 4\sqrt{-2} \cdot x^2 - (4 + 8\sqrt{-2})x
	 \end{align*}
	 Scale $x$ by $2$ and $y$ by $2\sqrt{2}$ to find that $E'$ is isomorphic over $\Q(i, \sqrt{2})$ to 
	 \begin{align*}
	 	\conj{E}: y^2 = x^3 + 2\sqrt{-2}\cdot x^2 - (1 + \sqrt{-2})x.
	 \end{align*}
	 Since $\conj{E}$ is the only conjugate of $E$ under $\Gal(\conj{\Q}/\Q)$ other than itself, we conclude that $E$ is a $\Q$-curve via the isogeny $E \to E' \to \conj{E}$. 
\end{example}

Notice that in this example the isomorphism between $E'$ and $\conj{E}$ had to be taken over an extension $\Q(i, \sqrt{2})$ of the field of definition $\Q(\sqrt{-2})$ of $E$. In the absence of complex multiplication, isogenies are unique up to composition with $[-1]$, so we cannot force this isogeny $E \to \conj{E}$ to be defined over the field of definition. Therefore, one natural question is:

\begin{question}\label{General-Q}
When can the isogeny $E \to E^\sigma$ between a $K$-curve $E$ and its conjugate be defined over the field of definition $L$ of $E$?
\end{question}
 We will examine this question shortly as one of the main results of this thesis. 

Galois representations coming from the Tate module $T_\ell E$ illustrate one reason why elliptic $\Q$-curves can be considered to generalize curves defined over $\Q$. We have a representation
\begin{align*}
	\rho_{C, \ell}: \Gal(\conj{\Q}/\Q) \to \End(T_\ell C)
\end{align*}
for any elliptic curve $C$ defined over $\Q$, and the isomorphism class of this representation depends only on the isogeny class of $C$. Similarly, letting $E$ be a $K$-curve, we may derive a Galois representation
\begin{align*}
	\rho_{E, \ell} :\Gal(\conj{\Q}/\Q)\to \conj{\Q}_\ell^*\GL_2(\Q_\ell).
\end{align*}

We omit further motivational discussion, since it would involve introducing Galois cohomology and would take us too far afield; see \cite{ellenberg} for further details. 
 
\subsection{Constructing and classifying $K$-curves}
Elliptic curves with complex multiplication provide the first examples of $K$-curves. Indeed, some authors, such as \cite{gross}, originally restricted the definition only to CM curves. We prove that CM curves are $K$-curves in the case that $K$ is a number field.
\begin{proposition}
	Let $K$ be a number field, and let $E/\conj{\Q}$ have complex multiplication. Then $E$ is a $K$-curve. 
\end{proposition}
\begin{proof}
	Since $E$ is CM, the endomorphism ring $\End_{\C}(E)$ must be an imaginary quadratic order $A$ [\cite{MR2514094}, Corollary III.9.4]. For any $\sigma \in \Gal(\conj{\Q}/K)$, the rings $\End_{\C}(E)$ and $\End_{\C}(E^\sigma)$ are isomorphic, since an endomorphism $\varphi$ of $E$ corresponds to the endomorphism $\varphi^\sigma$ of $E^\sigma$.  

	Now treat $E$ and $E^\sigma$ as complex elliptic curves $\C/\Lambda$ and $\C/\Lambda'$. Since endomorphisms of $E$ are in correspondence with homotheties of $\Lambda$ [\cite{MR2514094}, Corollary 4.1.1], we conclude that $\End_\C(\Lambda) \cong \End_{\C}(\Lambda') \cong A$. But the only complex lattices equipped with an endomorphism ring isomorphic to the imaginary quadratic order $A$ are those of the form $cI$ for some ideal $I$ of $A$ and $c \in \C$. Here, we are embedding $A \subset \C$ as a lattice itself. 

	We may therefore write $\Lambda = cI$ and $\Lambda = c'I$ for some ideals $I, I' \subseteq A \subset \C$ and $c, c' \in \C$. The ideal $I'$ has finite index $N'$, so it contains $N'A$. Therefore, $\Lambda' = c'I'$ contains the scalar multiple $\frac{c'N'}{c} I$ of $\Lambda$ with finite index, yielding a map $\C/\Lambda \to \C/\Lambda'$ of finite degree, which corresponds to an isogeny $E \to E'$.
\end{proof}

The set of $K$-curves is closed under isogeny, so that we may speak of isogeny classes of $K$-curves rather than isomorphism classes. 
\begin{proposition}
	If $E$ is a $K$-curve, then so is any curve $E'$ isogenous to $E$ over $M$. 
\end{proposition}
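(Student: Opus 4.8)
The plan is to unwind the definition of a $K$-curve and chain together isogenies, using the fact that the composition and dual of isogenies are again isogenies over $M$. Recall that $E$ being a $K$-curve means that for every $\sigma \in G = \Gal(M/K)$, there is an isogeny $E^\sigma \to E$ defined over $M$. Our goal is to produce, for every such $\sigma$, an isogeny between $(E')^\sigma$ and $E'$ over $M$. The only structure I need is that $E'$ is linked to $E$ by a fixed isogeny $\psi\colon E \to E'$ defined over $M$.

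First I would fix the isogeny $\psi\colon E \to E'$ over $M$ and apply the Galois automorphism $\sigma$ to it. Since $\psi$ is defined over $M$ and $\sigma$ fixes $K$ but acts on the coefficients of $\psi$, the conjugate $\psi^\sigma\colon E^\sigma \to (E')^\sigma$ is again an isogeny defined over $M$. This gives me a way to pass between the conjugates of $E$ and the conjugates of $E'$. Next, I would invoke the dual isogeny: the dual $\widehat{\psi^\sigma}\colon (E')^\sigma \to E^\sigma$ is an isogeny over $M$, as is the hypothesized isogeny $E^\sigma \to E$ coming from the $K$-curve property of $E$.

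The key step is then to assemble the composite
\begin{align*}
	(E')^\sigma \xrightarrow{\ \widehat{\psi^\sigma}\ } E^\sigma \longrightarrow E \xrightarrow{\ \psi\ } E',
\end{align*}
where the middle arrow is the isogeny $E^\sigma \to E$ guaranteed by the hypothesis that $E$ is a $K$-curve. Each arrow is an isogeny defined over $M$, and a composite of isogenies is again an isogeny; since $\sigma$ was arbitrary, this exhibits $(E')^\sigma$ as isogenous to $E'$ over $M$ for every $\sigma \in G$, which is precisely the statement that $E'$ is a $K$-curve. One should also remark that $E'$ is itself defined over some finite extension $L'$ of $K$ inside $M$ (take $L'$ large enough to contain a field of definition of $E'$ and the coefficients of $\psi$), so the setup of the definition is satisfied.

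The main obstacle here is essentially bookkeeping rather than mathematical depth: I must be careful that conjugation by $\sigma$ genuinely sends an isogeny defined over $M$ to another isogeny defined over $M$, which holds because $M$ is Galois over $K$ and hence stable under $G$, and that the dual isogeny construction is compatible with the field of definition. Neither of these is hard, but they are the points where one could slip. No complex-multiplication subtleties arise, since I only need existence of isogenies, not their uniqueness or any ring structure.
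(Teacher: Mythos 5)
Your proof is correct and is essentially the paper's own argument: both conjugate the fixed isogeny $\psi$ by $\sigma$, take a dual, and splice in the isogeny guaranteed by the $K$-curve hypothesis; the paper builds $\psi^\sigma \circ \varphi \circ \hat{\psi}\colon E' \to (E')^\sigma$ while you build the mirror-image composite $(E')^\sigma \to E'$, which is the same construction read in the opposite direction.
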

\begin{proof}
	Let $\sigma \in \Gal(M/K)$, and let $\varphi: E \to E^\sigma$ and $\psi: E \to E'$ be isogenies. Then we obtain an isogeny $\psi^\sigma: E^\sigma \to (E')^\sigma$, and so we can construct an isogeny $\psi^\sigma \circ \varphi \circ \hat{\psi}: E' \to (E')^\sigma$:
	\begin{center}
		\begin{tikzcd}
			E  \arrow[r, "\varphi"] & E^\sigma \arrow[d, "\psi^\sigma"] \\
			E'  \arrow[u, "\hat{\psi}"]  \arrow[r] & (E')^\sigma
		\end{tikzcd}
	\end{center}
\end{proof}

We may obtain $K$-curves from $K$-rational points on a modular curve $Y^*(N)$---recall that this modular curve is the the quotient of $Y_0(N)$ by the action of the group of Atkin-Lehner involutions $W(N)$. We may also obtain them similarly from $K$-rational points on $Y_0^+(N)$, the quotient of $Y_0(N)$ by the action of the Fricke involution.
\begin{proposition}{\label{k-construct}}
	Let $P$ be a $K$-rational point on $Y^*(N)$. Then every point in the fiber $\mathcal{P}$ of $P$ in $Y_0(N)$ corresponds to a $K$-curve. Similarly, the fibers of $K$-rational points on $Y^+_0(N)$ correspond to $K$-curves.
\end{proposition}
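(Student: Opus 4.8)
The plan is to reduce the statement to two facts that are essentially already available: that the fiber $\mathcal{P}$ is stable under the Galois action, and that any two points lying in a common $W(N)$-orbit correspond to isogenous elliptic curves. Since a point of $Y_0(N)$ only determines a $\C$-isomorphism class---equivalently, a $j$-invariant---I would first check that the property of being a $K$-curve is well-defined at the level of $j$-invariants. Two elliptic curves over $M$ sharing a $j$-invariant are isomorphic over $M$, hence isogenous over $M$; combined with the fact that $K$-curves are closed under isogeny (proved above), this shows that whether a curve is a $K$-curve depends only on its $M$-isomorphism class, and therefore only on its $j$-invariant. Thus it is meaningful to say that a point of $Y_0(N)$ ``corresponds to a $K$-curve,'' and the twist ambiguity in passing from a point of the modular curve to an actual Weierstrass model becomes harmless.

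Next I would assemble the two geometric inputs. First, because the quotient map $Y_0(N) \to Y^*(N)$ is $K$-rational (Section \ref{K-rational}), the fiber $\mathcal{P}$ of the $K$-rational point $P$ is stable under $G = \Gal(M/K)$, and it is precisely a single $W(N)$-orbit. Second, I would record that the Atkin--Lehner and Fricke involutions relate isogenous curves: by the construction in Section \ref{fricke} (diagram \ref{atkin-lehner diagram}), applying $w_{N_1}$ to the cyclic $N$-isogeny $E \to E/C$ produces an isogeny whose source curve is the quotient $E/C_1$, which is isogenous to $E$ via the quotient map. Running over the generators $w_{N_1}$ of $W(N)$ shows that all source curves attached to the points of a single $W(N)$-orbit are mutually isogenous over $M$.

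With these in hand the main argument is short. Fix any $Q \in \mathcal{P}$ and let $E/M$ be a curve with $j(E) = j(Q)$. For each $\sigma \in G$ the conjugate $E^\sigma$ has $j$-invariant $\sigma(j(Q)) = j(Q^\sigma)$, while $Q^\sigma \in \mathcal{P}$ by Galois-stability. Since $Q$ and $Q^\sigma$ lie in the same $W(N)$-orbit, any curve with $j$-invariant $j(Q^\sigma)$ is isogenous over $M$ to $E$; as $E^\sigma$ has exactly this $j$-invariant, it is $M$-isomorphic to such a curve and hence isogenous to $E$ over $M$. As $\sigma$ was arbitrary, $E$ is a $K$-curve, and since $Q$ was an arbitrary point of $\mathcal{P}$, every point of $\mathcal{P}$ corresponds to a $K$-curve. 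The assertion for $Y_0^+(N)$ is identical, with the two-element group $\{1, w_N\}$ and the Fricke involution (which sends an isogeny to its dual, whose source is again isogenous to $E$) playing the role of $W(N)$.

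I expect the main obstacle to be careful bookkeeping rather than any deep difficulty: one must match the Galois action on the modular curve with the Galois action on the associated elliptic curve, that is, justify the identification $\sigma(j(Q)) = j(E^\sigma) = j(Q^\sigma)$, and consistently separate a point of $Y_0(N)$ from a chosen twist of the curve it parameterizes. Once the $K$-curve property is confirmed to depend only on the $j$-invariant, these twist ambiguities dissolve and the two inputs---Galois-stability of the fiber and $W(N)$-orbits consisting of isogenous curves---combine immediately to give the result.
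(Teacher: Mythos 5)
Your proof is correct and follows essentially the same route as the paper's (two-sentence) proof: Galois-stability of the fiber $\mathcal{P}$, coming from the $K$-rationality of the quotient map, combined with the fact that points related by Atkin--Lehner (or Fricke) involutions correspond to curves isogenous over $M$. The extra care you take in checking that the $K$-curve property depends only on the $j$-invariant---so that the twist ambiguity is harmless---is a worthwhile detail that the paper leaves implicit, but it does not change the argument.
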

\begin{proof}
 	The set $\mathcal{P}$ is stable under $\Gal(M/K)$ because it is the fiber of a $K$-rational point under a $K$-rational map. However, since the points of $\mathcal{P}$ are all related by Atkin-Lehner involutions, their corresponding elliptic curves are all isogenous. 
	The same reasoning applies for the $K$-rational fibers of $Y_0^+(N)$.
 \end{proof} 

The previous proposition classifies all non-CM $K$-curves by a theorem of Elkies:
\begin{theorem}\label{elkies-theorem}
\emph{\cite{k-curves}.}  Let $K$ be any field with separable closure $M$, and let $E$ be a $K$-curve without complex multiplication. Then there exists a squarefree integer $N$, depending only on the $M$-isogeny class of $E$, such that:
\begin{enumerate}
 	\item $E$ is isogenous over $M$ with a $K$-curve arising from a $K$-rational point on $Y^*(N)$ in the sense of Proposition \ref{k-construct}; and
 	\item If for some $N'$ there is a $K$-rational point on $Y^*(N')$ that parameterizes $K$-curves isogenous with $E$, then $N \mid N'$. 
 \end{enumerate}

\end{theorem}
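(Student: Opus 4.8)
The plan is to distill the non-CM hypothesis into a single numerical invariant of the $M$-isogeny class and then match it against the Atkin--Lehner structure $W(N)\cong(\Z/2\Z)^{\omega(N)}$. Because $E$ has no complex multiplication, for any curve $E'$ that is $M$-isogenous to $E$ the group $\on{Hom}(E,E')\otimes\Q$ is one-dimensional over $\Q$; hence any two isogenies $E\to E'$ differ by a nonzero rational scalar, and the degree is a well-defined element of $\Q^\times/(\Q^\times)^2$ (replacing $\phi$ by $n\phi$ multiplies $\deg\phi$ by $n^2$). For each $\sigma\in G=\Gal(M/K)$ the curve $E^\sigma$ is isogenous to $E$, so I define $n_\sigma\in\Q^\times/(\Q^\times)^2$ to be the degree class of any isogeny $E\to E^\sigma$, represented by a positive squarefree integer. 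First I would check that $\sigma\mapsto n_\sigma$ is a homomorphism $G\to\Q^\times/(\Q^\times)^2$: composing $\phi_\sigma\colon E\to E^\sigma$ with $\phi_\tau^\sigma\colon E^\sigma\to E^{\sigma\tau}$ produces an isogeny $E\to E^{\sigma\tau}$, and since degree is Galois-invariant and multiplicative its class is $n_\sigma n_\tau$, which must equal $n_{\sigma\tau}$ by uniqueness up to scalars. The same bookkeeping, applied to $\psi^\sigma\circ\phi_\sigma\circ\hat\psi$ for a connecting isogeny $\psi\colon E\to E'$, shows $n_\sigma$ is unchanged under replacing $E$ by an $M$-isogenous curve, so the homomorphism depends only on the $M$-isogeny class.

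I then set $N$ to be the radical (product of distinct primes) of the subgroup $\{n_\sigma\}\subseteq\Q^\times/(\Q^\times)^2$ (finite, as discussed below), a squarefree integer depending only on the isogeny class. To produce the point on $Y^*(N)$, I would choose a convenient curve $E_0$ in the isogeny class and, for each prime $p\mid N$, use a conjugate realizing $p\mid n_\sigma$ to pin down a cyclic $p$-isogeny direction; since the primes dividing $N$ are distinct, the Chinese Remainder Theorem lets me assemble these into a single cyclic $N$-isogeny, i.e.\ a point $[E_0,C]\in Y_0(N)$ lying over $E_0$. The key claim is that the Galois conjugates $[E_0,C]^\sigma$ are exactly the Atkin--Lehner translates $w_{n_\sigma}[E_0,C]$: the involution $w_d$ replaces $E_0$ by $E_0/C_d$ (the quotient by the order-$d$ part of $C$), so the $W(N)$-orbit of $[E_0,C]$ consists precisely of the configurations obtained from $E_0$ by cyclic isogenies of squarefree degree dividing $N$, which is where the conjugates $E^\sigma$ land. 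Granting this, the $W(N)$-orbit of $[E_0,C]$ is stable under $G$, so its image in $Y^*(N)=Y_0(N)/W(N)$ is a $K$-rational point, and by Proposition \ref{k-construct} its fiber consists of $K$-curves, one of which is $M$-isogenous to $E$. This proves (1).

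For the minimality statement (2), suppose a $K$-rational point $P'\in Y^*(N')(K)$ parameterizes curves $M$-isogenous to $E$. Its fiber in $Y_0(N')$ is a $G$-stable $W(N')$-orbit containing some curve $E_1$ in the isogeny class of $E$. For each $\sigma$ the conjugate $E_1^\sigma$ lies in the same fiber, hence is obtained from $E_1$ by a single Atkin--Lehner involution $w_{N_1}$ with $N_1$ a unitary divisor of $N'$; this is a cyclic isogeny $E_1\to E_1^\sigma$ of degree $N_1$, so its degree class is $n_\sigma$ and every prime of $n_\sigma$ divides $N_1\mid N'$. Since $n_\sigma$ is an isogeny invariant (computed above using $E$ or $E_1$ alike) and $N$ is the product of all primes occurring in the various $n_\sigma$, every prime of $N$ divides $N'$; as $N$ is squarefree, $N\mid N'$.

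The step I expect to be the main obstacle is the coherence claim in the second paragraph---verifying that the Galois action on the chosen level-$N$ point agrees with the combinatorial Atkin--Lehner action, uniformly in $\sigma$. Individually, each $E^\sigma$ is reached from $E_0$ by a cyclic isogeny of the correct squarefree degree, but one must check that the \emph{cyclic subgroup} $C^\sigma$ is the Atkin--Lehner translate of $C$ and that these identifications are consistent across composition, i.e.\ that the cocycle $\{n_\sigma\}$ genuinely exponentiates to a well-defined $G$-action on a single $W(N)$-orbit. Here the non-CM uniqueness of isogenies (forcing the $\C$-isomorphism class, hence the point of $Y_0(N)$, to be determined by the degree alone) is what makes the argument go through, and one must also confirm that $\{n_\sigma\}$ has finite image so that $N$ is a genuine integer; both points use that the conjugates $E^\sigma$ exhaust only finitely many classes modulo squares, each isogeny being defined over a finite extension. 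Assembling the per-prime data by the Chinese Remainder Theorem and tracking field-of-definition subtleties for the cyclic structure is routine by comparison.
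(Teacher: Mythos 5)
The paper itself gives no proof of this theorem --- it is quoted from Elkies \cite{k-curves} --- so your proposal can only be judged on its own merits. Much of it is sound and does follow the actual skeleton of Elkies' argument: since $E$ has no CM, $\mathrm{Hom}_M(E,E^\sigma)$ is free of rank one, so the degree class $n_\sigma\in\Q^\times/(\Q^\times)^2$ is well defined; $\sigma\mapsto n_\sigma$ is a homomorphism invariant under replacing $E$ by an $M$-isogenous curve; its image is finite because it is trivial on the finite-index subgroup of $\Gal(M/K)$ fixing a field of definition of $E$ (there $E^\sigma=E$ and a non-CM endomorphism has square degree), and a homomorphism to an abelian group killing a finite-index subgroup has finite image. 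Your minimality argument for (2) is also essentially correct: a Galois-stable $W(N')$-orbit forces each $E_1^\sigma$ to be an Atkin--Lehner translate $w_{N_1}[E_1,C]$ with $N_1$ a unitary divisor of $N'$, so $n_\sigma$ is the class of $N_1$ and every prime of $n_\sigma$ divides $N'$.

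The genuine gap is exactly the step you flagged, and it is not a coherence verification that can be patched by bookkeeping --- it is the central difficulty of the theorem, and your proposed mechanism for it fails. Knowing $n_\sigma$ controls $\deg(E_0\to E_0^\sigma)$ only modulo squares: the actual (essentially unique) isogeny can have degree $n_\sigma m^2$ with $m>1$, in which case $E_0^\sigma$ is \emph{not} isomorphic to $E_0/C_d$ for any subgroup $C_d$ of a cyclic $N$-subgroup $C$ of $E_0$, so $[E_0,C]^\sigma$ lies outside the $W(N)$-orbit of $[E_0,C]$. Thus your key claim, that the $W(N)$-orbit ``is where the conjugates land,'' is false for an arbitrary curve in the isogeny class, and ``choose a convenient $E_0$'' plus the Chinese Remainder Theorem supplies no procedure for finding the right one. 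The missing idea is a centering (fixed-point) argument: for each prime $p$, the graph of $p$-power isogenies on the isogeny class of a non-CM curve is a tree, the Galois group acts on it by graph automorphisms with finite orbits, and a group acting on a tree with bounded orbits fixes either a vertex or the midpoint of an edge. One takes the fixed vertex at the primes where $\sigma\mapsto v_p(n_\sigma)\bmod 2$ is trivial (these are the $p\nmid N$) and the fixed, Galois-inverted edge --- a $p$-isogeny pair --- at the primes $p\mid N$; since the whole isogeny class decomposes compatibly as a product of its $p$-trees, these choices assemble into a single pair $(E_0,C)$ whose $W(N)$-orbit is Galois-stable, giving the $K$-rational point of $Y^*(N)$. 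Without this tree argument (or an equivalent substitute), part (1) of the proof does not go through.
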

\begin{corollary}
	Any non-$CM$ $K$-curve $E$ is isogenous to a curve defined over a finite multi-quadratic extension $L/K$. That is, $L$ is of the form $K(\sqrt{a_1}, \sqrt{a_2}, \dots, \sqrt{a_n})$ with $a_i \in K$.
\end{corollary}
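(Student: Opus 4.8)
The plan is to combine Elkies' classification (Theorem \ref{elkies-theorem}) with the structural observation about the Atkin--Lehner cover already recorded in Section \ref{K-rational}. Since $E$ is a non-CM $K$-curve, Theorem \ref{elkies-theorem} furnishes a squarefree integer $N$ and a $K$-rational point $P \in Y^*(N)(K)$ whose fiber in $Y_0(N)$ parameterizes $K$-curves isogenous to $E$ over $M$, exactly as in Proposition \ref{k-construct}. Thus it suffices to exhibit a single curve in this fiber that is isogenous to $E$ over $M$ and admits a model over a multi-quadratic extension of $K$.

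First I would analyze the quotient map $X_0(N) \to X^*(N)$. This map is $K$-rational and is a Galois cover whose deck group is $W(N) \cong (\Z/2\Z)^{\omega(N)}$, an elementary abelian $2$-group, so at the level of function fields $K(X_0(N))$ is a Galois extension of $K(X^*(N))$ with group $(\Z/2\Z)^{\omega(N)}$. Because $-1$ lies in any field of characteristic $0$, Kummer theory identifies such an extension as multi-quadratic: one may write $K(X_0(N)) = K(X^*(N))(\sqrt{g_1}, \dots, \sqrt{g_r})$ for finitely many functions $g_i \in K(X^*(N))^\times$, with $r = \omega(N)$. Specializing at $P$, each $g_i(P) =: a_i$ lies in $K$ (provided $P$ avoids the zeros and poles of the $g_i$), so every point $Q$ in the fiber of $P$ has its coordinates in the multi-quadratic field $L := K(\sqrt{a_1}, \dots, \sqrt{a_r})$. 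This is precisely the statement anticipated in Section \ref{K-rational}.

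It then remains to pass from the fiber point $Q$ to an actual isogenous elliptic curve defined over $L$. The point $Q \in Y_0(N)(L)$ only determines the $\C$-isomorphism class of its isogeny, so $j(E_0) = j(Q) \in L$ while no model is canonical --- this is the twist ambiguity emphasized throughout Section \ref{K-rational}. Since we need only produce \emph{some} curve isogenous to $E$, I would simply choose an elliptic curve $E_0''$ defined over $L$ with $j$-invariant equal to $j(E_0)$, which always exists over $L$. Then $E_0''$ is $\conj{K}$-isomorphic, hence isogenous over $M$, to the fiber curve $E_0$, which by construction is isogenous to $E$ over $M$; composing these isogenies gives the desired conclusion.

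The main obstacle I anticipate is making the specialization step rigorous at points lying over the ramification locus or over the zeros and poles of the $g_i$, where the clean Kummer description degenerates: there one must either choose the generators $g_i$ to be units at $P$, or argue directly with the decomposition group of the cover, showing that the residue extension at $Q$ is a subextension of a $(\Z/2\Z)^{\omega(N)}$-Galois extension and hence still multi-quadratic. A secondary point requiring care is the twist argument: I must confirm that realizing the $j$-invariant over $L$ yields a curve genuinely in the $M$-isogeny class of $E$, which follows because any two curves with equal $j$-invariant are isomorphic over $\conj{K}$ and isogeny classes are stable under $\Gal(M/K)$.
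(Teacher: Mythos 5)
Your proposal is correct and takes essentially the same route as the paper, which proves the corollary in one line by combining Elkies' Theorem \ref{elkies-theorem} with the fact that $W(N) \cong (\Z/2\Z)^{\omega(N)}$, so that the fiber of a $K$-rational point of $Y^*(N)$ lies in a multi-quadratic extension (as asserted in Section \ref{K-rational}). Your Kummer-theoretic specialization argument and the final step of choosing a model over $L$ with the prescribed $j$-invariant are simply a careful, rigorous expansion of what the paper leaves implicit.
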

\begin{proof}
	This follows from the fact that the group $W(N)$ of Atkin-Lehner involutions is isomorphic to $(\Z/2\Z)^{\omega(N)}$. 
\end{proof}

\subsection{Main theorem on strict $K$-curves given by fibers over $Y_0^+(N)$}

We now revisit Question \ref{General-Q}. If $E$ is a $K$-curve defined over $L$, when may we define the isogeny $E \to E^\sigma$ over $L$? As discussed in Section \ref{K-rational}, the points on the modular curve $Y_0(N)$ correspond to $\C$-isomorphism classes of elliptic curves, but these split into many $K$-isomorphism classes related by quadratic twists as long as we avoid $j \in \{0, 1728\}$. We may ignore these two cases in our discussion because they correspond to CM curves. There is no canonical way to associate a $K$-isomorphism class of elliptic curves to a point $P \in Y_0(N)(K)$, yet this ambiguity also allows some freedom that we may leverage to our advantage. To illustrate, we reconsider Example 
\ref{K-curve-ex}.
\vspace{10pt}

\noindent\textbf{Example 4.1.2 revisited.}
	We found that the curve 
	$$E: y^2 = x^3 - 2\sqrt{-2}\cdot x^2 + (-1 + 2\sqrt{-2})x$$
	is isomorphic to its conjugate only over $\Q(i, \sqrt{2})$. However, let us instead consider the twist 
	\begin{align*}
		F: y^2 = x^3 + 4x^2 + (2 - 4\sqrt{-2})x,
	\end{align*}
	which is isomorphic to $E$ over $\C$ via the change of variables $(x, y) \mapsto \left(\sqrt{-2}\cdot x, \sqrt{-2\sqrt{-2}} \cdot y\right)$. Then $F$ is $2$-isogenous over $\Q(\sqrt{-2})$ to 
	\begin{align*}
		F': y^2 = x^3 - 8x^2 + 8(1 + 2\sqrt{-2})x.
	\end{align*}
	But $F'$ is isomorphic to 
	\begin{align*}
		\conj{F}: y^2 = x^3 + 4x^2 + (2 + 4\sqrt{-2})x
	\end{align*}
	via the change of variables $(x, y) \mapsto (-2x, -2\sqrt{-2} \cdot y)$. Thus, we obtain an isogeny $F \to \conj{F}$ that is defined \emph{over the field of definition} $L = \BQ(\sqrt{-2})$ of $F$, which was impossible for the twist $E$.

This example illustrates that by taking an appropriate twist of a $K$-curve, it may be possible to find a $K$-curve for which the isogeny between Galois conjugates is defined over the field of definition $L/K$ rather than requiring some extension of $L$.

\defn{Let $E$ be a $K$-curve defined over the extension $L/K$. We say that $E$ is a \emph{strict} $K$-curve if $E$ is isogenous to its $\Gal(L/K)$ conjugates over $L$ rather than some larger extension.}

Thus, we refine our main question to:
\begin{question}
	Let $K$ be a number field, and let $P$ be an $L$-rational point on $Y_0(N)$ corresponding to a $\C$-isomorphism class of $K$-curves. When can we \emph{choose} a twist giving a strict $K$-curve $E/L$ corresponding to $P$? When this is possible, can we determine exactly which twists of $E$ are strict $K$-curves?
\end{question}

 Let $h, h'$ denote $L$-rational coordinates on $X_0(N)$, even when $X_0(N)$ does not have genus $0$. We use bar notation, e.g. $\conj{h}$, to denote Galois conjugation over the quadratic extension $L/K$, which is not to be confused with complex conjugation. The fiber of any unramified point $H$ on $X^+(N)$ contains two curves $E, E'$; the ramified points correspond to CM curves, we which we ignore in our discussion. If $H$ is $K$-rational, then its two preimages $h, h'$ are either $K$-rational themselves and thus are automatically strict $K$-curves, else they are $L$-rational for some quadratic extension of $K$. In the latter case, the points $h$ and $h'$ must be conjugates in $L/K$. That is, we must have $$w^*h = \conj{h} = h',$$ where $\conj{h}$ denotes the unique Galois conjugate of $h$ in $L/K$.

With this setup, we present the main theorem of this thesis, which gives a precise constructive description of strict $K$-curves that arise from $K$-rational points on $X_0^+(N)$. This description involves a Diophantine condition on a pair of conics that is independent of the coordinate $h$, only depending on the field extension $L$. 

\begin{theorem}\label{main-theorem}
	 Let $L/K$ be a quadratic extension, and let $\{h, h'\} \subset Y_0(N)(L)\setminus Y_0(N)(K)$ be the fiber of a non-CM point in $Y_0^+(N)(K)$. Let $\tau, \tau' \in \mathbb{H}$ correspond to $h, h'$, and likewise let $E, E'$ be the isogenous elliptic curves corresponding to $h, h'$. 
	\begin{enumerate}[(a)]
		\item The following are equivalent:
		\begin{enumerate}[(i)]
			\item 
			There exists a choice of twist of cyclic $N$-isogeny $E \to E'$ defined over $L$ such that $E'$ is isomorphic over $L$ to the conjugate curve $\conj{E}$ of $E$.
		  \item At least one of the integers $N$ or $-N$ lies in the image of the Galois norm map $L^\times \to K^\times$.
		  \end{enumerate}
		\item The twists described by (a) are precisely those isomorphic to the models
		\begin{align*}
			E: y^2 = x^3 - \frac{\alpha^2 A_4}{48}x + \frac{\alpha^3A_6}{864}, && E': y^2 = x^3 - \frac{\alpha^2 A_4'}{48}x + \frac{\alpha^3 A_6'}{864},
		\end{align*}
		such that $\alpha \in L^\times$ has Galois norm $\alpha \conj{\alpha}$ lying in $-N\cdot (L^\times)^2$. Here, the coefficients $A_4, A_6, A'_4, A'_6$ are given by
		\begin{align*}
			A_4 &:= \frac{\mathsf{E}_4(\tau)}{\left(\mathsf{E}_2^{(N)}(\tau)\right)^2}, & A_6 &:= \frac{\mathsf{E}_6(\tau)}{\left(\mathsf{E}_2^{(N)}(\tau)\right)^3}, \\
			 A_4' &:= \frac{\mathsf{E}_4(N \tau)}{\left(\mathsf{E}_2^{(N)}(\tau)\right)^2},& A_6' &:= \frac{\mathsf{E}_6(N \tau)}{\left(\mathsf{E}_2^{(N)}(\tau)\right)^3}
		\end{align*}
		as in Section \ref{section3.3}. 
	\end{enumerate}	
\end{theorem}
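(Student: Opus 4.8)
The plan is to work entirely inside the explicit Weierstrass models furnished by the Tate-curve construction of Section~\ref{section3.3} and to reduce both parts to a single computation: expressing the Galois conjugate $\conj{E}$ as an explicit twist of $E'$. Since the underlying point is non-CM we have $j(E)=j(E')\notin\{0,1728\}$, so $\operatorname{Aut}(E)=\{\pm 1\}$ and every twist in sight is quadratic. Twisting the isogeny $E\to E'$ over $L$ then amounts to twisting \emph{both} curves by a common scalar $\alpha\in L^\times$, which produces exactly the models $E_\alpha,E_\alpha'$ displayed in part~(b); the isogeny persists because it is conjugated by the twisting isomorphisms on source and target. I would prove part~(b) first and then deduce part~(a) from it.

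The heart of the matter is to locate $\conj{E}$ among the twists of $E'$. The coefficients $A_4,A_6,A_4',A_6'$ are $K$-rational---indeed $\Q$-rational---modular functions on $X_0(N)$, their defining $q$-expansions having rational coefficients, so Galois conjugation over $L/K$ commutes with their evaluation and carries the value at the point $\{h\}$ to the value at the conjugate point $\{h'\}$. Concretely $\conj{A_4}=A_4(\tau')$ and $\conj{A_6}=A_6(\tau')$, where $\tau'=w_N\tau=-1/N\tau$. I would then evaluate the right-hand sides by modular transformation laws. As $\mathsf{E}_4,\mathsf{E}_6$ have weights $4,6$ on $\SL_2(\Z)$ and $\tau'=-1/(N\tau)$,
\begin{align*}
 \mathsf{E}_4(\tau')=(N\tau)^4\,\mathsf{E}_4(N\tau), && \mathsf{E}_6(\tau')=(N\tau)^6\,\mathsf{E}_6(N\tau),
\end{align*}
while Proposition~\ref{E2-antiinvariant} gives $\mathsf{E}_2^{(N)}(\tau')=-N\tau^2\,\mathsf{E}_2^{(N)}(\tau)$. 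Forming the quotients that define $A_4$ and $A_6$, the powers of $\tau$ cancel and I obtain the clean relations
\begin{align*}
 \conj{A_4}=N^2 A_4', && \conj{A_6}=-N^3 A_6'.
\end{align*}

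Twisting by $\alpha$ multiplies the coefficients of $E_\alpha$ by $\alpha^2,\alpha^3$ and those of $\conj{E_\alpha}$ by $\conj\alpha^2,\conj\alpha^3$, so $\conj{E_\alpha}$ has linear coefficient $-N^2\conj\alpha^2 A_4'/48$ and constant term $-N^3\conj\alpha^3 A_6'/864$. Now I would invoke the standard criterion that, for curves of common $j\notin\{0,1728\}$, the short Weierstrass models $y^2=x^3+ax+b$ and $y^2=x^3+a'x+b'$ are isomorphic over $L$ exactly when $b'a/(ba')\in(L^\times)^2$. Applying this to $E_\alpha'$ and $\conj{E_\alpha}$, the factors $A_4',A_6',\alpha,\conj\alpha$ collapse to give the twisting parameter $-N\conj\alpha/\alpha$, so $E_\alpha'$ is isomorphic to $\conj{E_\alpha}$ over $L$ iff $-N\conj\alpha/\alpha\in(L^\times)^2$. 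Multiplying by the square $\alpha^2$, this reads $\alpha\conj\alpha\in -N\,(L^\times)^2$, which is precisely the description in part~(b).

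For part~(a) I would translate ``some such $\alpha$ exists'' into a norm condition. Writing $L=K(\sqrt D)$, an element of $K^\times$ is a square in $L$ iff it lies in $(K^\times)^2\cup D\,(K^\times)^2$, and the identity $\sqrt D\cdot\conj{\sqrt D}=-D$ shows $-D$ is always a norm from $L^\times$. Splitting the requirement $\alpha\conj\alpha\in -N(L^\times)^2$ according to whether $-N\,\alpha\conj\alpha$ lands in $(K^\times)^2$ or in $D(K^\times)^2$ shows that a suitable $\alpha$ exists precisely when $-N$ is a norm from $L^\times$, respectively when $N$ is; together these are exactly condition~(ii). The step I expect to be most delicate is the conjugate computation of the second paragraph, specifically the bookkeeping of the powers and signs of $N$: the cube of the anti-invariance factor $-N\tau^2$ is what forces the sign in $\conj{A_6}=-N^3A_6'$, and hence the appearance of $-N$ rather than $N$ in part~(b) and of both $N$ and $-N$ in part~(a). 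I would sanity-check these signs against the quadratic twist worked out in the revisited Example~\ref{K-curve-ex}.
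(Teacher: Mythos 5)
Your proposal is correct and follows essentially the same route as the paper's proof: twisting the Tate-curve models of Section~\ref{section3.3} by $\alpha \in L^\times$, using the $\Q$-rationality of $A_4, A_6$ together with $\conj{h} = w_N^* h$ and the anti-invariance of $\mathsf{E}_2^{(N)}$ (Proposition~\ref{E2-antiinvariant}) to obtain $\conj{A_4} = N^2 A_4'$ and $\conj{A_6} = -N^3 A_6'$, reducing the isomorphism $E'_\alpha \cong \conj{E_\alpha}$ over $L$ to $\alpha\conj{\alpha} \in -N\cdot(L^\times)^2$, and then translating this into the norm condition via the same $L = K(\sqrt{D})$ case analysis that appears as the paper's final lemma. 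Your substitution of the twist-parameter criterion $b'a/(ba') \in (L^\times)^2$ for the paper's explicit $(x,y)\mapsto(u^2x, u^3y)$ change of variables is only a cosmetic difference.
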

\begin{proof}
	As discussed in Section \ref{section3.3}, the points $h, h'$ correspond to the pair of isogenous curves 
	\begin{align*}
		E: y^2 &= x^3 + a_4x + a_6 \\ E': y^2 &= x^3 + a_4'x + a_6'
	\end{align*}
	where
	\begin{align*}
		a_4 &= -\frac{E_4(\tau)}{48\lambda^2}, & a_6 &= \frac{\mathsf{E}_6(\tau)}{864\lambda^3},\\
		a_4' &= \frac{\mathsf{E}_4(N \tau)}{48\lambda^2}, & a_6' &= \frac{\mathsf{E}_6(N \tau)}{864\lambda^3}
	\end{align*}
	for an arbitrary modular form $\lambda$ of weight $2$ on $X_0(N)$; this pair is equipped with the cyclic $N$-isogeny $E \to E'$ defined over $L$ given by $h$. 

	We take the choice $\lambda = \alpha^{-1} \mathsf{E}_2^{(N)}(\tau)$ for some $\alpha \in L^{\times}$. As $\alpha$ ranges over $L^{\times}$, the models for $E$ and $E'$ range over all their quadratic twists. Then $\frac{\mathsf{E}_4(\tau)}{\mathsf{E}_2^{(N)}(\tau)^2}$ and $\frac{\mathsf{E}_6(\tau)}{\mathsf{E}_2^{(N)}(\tau)^3}$ are modular functions on $X_0(N)$. These functions are defined over $\Q$, and we have $h' = \conj{h} = w^*h$, so this implies that 
	\begin{align}\label{conjugate-iso}
		\conj{A_4} = \conj{\left(\frac{\mathsf{E}_4(\tau)}{\left(\mathsf{E}_2^{(N)}(\tau)\right)^2}\right)} &= {\frac{\mathsf{E}_4(-1/N\tau)}{\left(\mathsf{E}_2^{(N)}(-1/N\tau)\right)^2}},\\
		\conj{A_6}= \conj{\left(\frac{\mathsf{E}_6(\tau)}{\left(\mathsf{E}_2^{(N)}(\tau)\right)^3}\right)} &= {\frac{\mathsf{E}_6(-1/N\tau)}{\left(\mathsf{E}_2^{(N)}(-1/N\tau)\right)^3}},\label{conjugate-iso2}
	\end{align}
	since $\frac{\mathsf{E}_4(\tau)}{\mathsf{E}_2^{(N)}(\tau)^2}$ and $\frac{\mathsf{E}_6(\tau)}{\mathsf{E}_2^{(N)}(\tau)^3}$ are given by some rational expression in terms of the coordinate $h$. 

	The Eisenstein series $\mathsf{E_4}$ and $\mathsf{E}_6$ satisfy
	\begin{align*}
		\mathsf{E}_4(-1/N\tau) &= (N\tau)^4 \mathsf{E}_4(N\tau),\\
		\mathsf{E}_6(-1/N\tau) &= (N\tau)^6 \mathsf{E}_6(N\tau),
	\end{align*}
	and by Proposition \ref{E2-antiinvariant}, the modular form $\mathsf{E}_2^{(N)}$ is anti-invariant under the Fricke involution, so that
	\begin{align*}
		\mathsf{E}_2^{(N)}(-1/N\tau) = -N\tau^2\mathsf{E}_2^{(N)}(\tau).
	\end{align*}
	Combining these functional equations gives
	\begin{align*}
		{\frac{\mathsf{E}_4(-1/N\tau)}{\left(\mathsf{E}_2^{(N)}(-1/N\tau)\right)^2}} &= N^2 \frac{\mathsf{E}_4(N\tau)}{\left(\mathsf{E}_2^{(N)}(\tau)\right)^2} = N^2A_4',\\
		{\frac{\mathsf{E}_6(-1/N\tau)}{\left(\mathsf{E}_2^{(N)}(-1/N\tau)\right)^3}} &= -N^3 \frac{\mathsf{E}_6(N\tau)}{\left(\mathsf{E}_2^{(N)}(\tau)\right)^3} = -N^3 A_6',
	\end{align*}
	so that substituting equations (\ref{conjugate-iso}) and (\ref{conjugate-iso2}) yields 
	
	\begin{align}\label{fundamental-relation}
		\conj{A_4} &= (\alpha/\conj{\alpha})^2N^2 A_4', &
		\conj{A_6} &= -(\alpha/\conj{\alpha})^3 N^3 A_6'.
	\end{align}
	
	The only isomorphisms that preserve Weierstrass equations of the form $y^2 = x^3 + a_4x + a_6$ come from changes of variables of the form $$(x, y) \mapsto (u^2 x, u^3 y).$$ We need such a change of variables from 
	\begin{align*}
		E': y^2 = x^3 + a_4'x + a_6' 
	\end{align*}
	to 
	\begin{align*}
		\conj{E}: y^2 = x^3 + \conj{a}_4x + \conj{a}_6, 
	\end{align*}
	which necessitates 
	\begin{align*}
		u^4\conj{a_4} &= a_4',\\
		u^6\conj{a_6} &= a_6',
	\end{align*}
	or equivalently
	\begin{align*}
		u^4\conj{A_4} &= A_4',\\
		u^6\conj{A_6} &= A_6'.
		\end{align*}
The relations (\ref{fundamental-relation}) show that these conditions are equivalent to 
	\begin{align*}
		u^2 = - (\alpha/\conj{\alpha})N.
	\end{align*}
	We need $u^3 \in L^\times$ in order for the change of variables to occur over $L$. Therefore the isomorphism occurs over $L$ if and only if $- (\alpha/\conj{\alpha})N$ is a square in $L^\times$. Multiplying by the square $\conj{\alpha}^2/N^2$ gives the equivalent condition \begin{align}\alpha \conj{\alpha} \in -N \cdot (L^\times)^2, \label{main-condition}
	\end{align}
	yielding conclusion (b).

	To reach conclusion (a) from (b), we must determine when (\ref{main-condition}) has a solution over $L^\times$, which is equivalent to asking whether $-N$ is the product of a norm and a square. We state and prove the desired equivalent condition as a separate lemma:

	\begin{lemma}
		Let $L/K$ be a quadratic extension of characteristic not equal to $2$, and let $\mathbf{N}$ denote the image of the norm map $L^\times \to K^\times$.  Then an element $N \in K$ lies in $\mathbf{N} \cdot (L^\times)^2$ if and only if either $\pm N$ lies in $\mathbf{N}$. 
	\end{lemma}
	\begin{proof}
		We may write $L = K(\sqrt{D})$ for some $D \in K$, so $-1$ always lies in $\mathbf{N} \cdot (L^\times)^2$ as the product of the norm $-D$ of $\sqrt{D}$ and the square $D^{-1}$. Therefore, if either $N$ or $-N$ is a norm, then $N \in \mathbf{N} \cdot (L^\times)^2$.

		Conversely, suppose that $N$ lies in $\mathbf{N} \cdot (L^\times)^2$, so that we may write
		\begin{align*}
			N = \gamma^2 (a^2 - b^2D)
		\end{align*}
		for some $\gamma \in L^\times$ and $a, b \in K$. Over characteristic not equal to $2$, if $c, d\in K$ then an element of the form $(c + d\sqrt{D})^2$ lies in $K$ if and only $c = 0$ or $d = 0$. Since $N$ lies in $K$, we conclude that $\gamma$ must either lie in $K$ or in $\sqrt{D} \cdot K$.

		If $\gamma \in K$, then we may absorb the factor of $\gamma$ into $a$ and $b$ to conclude that $N$ lies in the image of the norm. Else $\gamma = d\sqrt{D}$ for some $d \in K$, so that
		\begin{align*}
			-N = (bD)^2 - (ad)^2D
		\end{align*}
		and thus $-N$ lies in the image of the norm map. 
	\end{proof}
	Since $-N$ is an integer and therefore lies in the base field $K$, conclusion (a) follows from (b) by the lemma.
\end{proof}

If we write $L = K(\sqrt{D})$, then part (a) states that the existence of a strict $K$ curve defined over $L$ corresponding to a $K$-rational point on $X_0^+(N)$ is equivalent to a solution to the pair of conics
\begin{align*}
	x^2 - Dy^2 = \pm N
\end{align*}
over $K$.

We conclude by illustrating the construction from part (b) of the theorem more explicitly in the case $N = 2$. Let $L = K(\sqrt{D})$. The values of $h = a + b\sqrt{D}$ satisfying the hypothesis of the theorem are those such that $\conj{h} = w_2^*h = 4096/h$, so that these values of $h$ are given precisely by the solutions to the conic
$$a^2 - Db^2 = 4096$$
with $a, b \in K$. We may parameterize this conic over $t \in K$ as
\begin{align*}
	a &= 64\cdot \frac{1 + Dt^2}{1 - Dt^2},\\
	b &= 128 \cdot \frac{t}{1 - Dt^2},
\end{align*}
and thus
\begin{align*}
	h = 64\cdot \frac{(t\sqrt{D} + 1)^2}{1 - Dt^2}.
\end{align*}
Substitute this parameterization of $h$ into the formulas in Table \ref{elliptic-coefficients-start} to find the coefficients of $E, E'$:
\begin{align*}
	E: y^2 &= x^3 - \alpha^2 \frac{(5 - 3t\sqrt{D})}{96}x + \alpha^3  \frac{(-7 + 9t\sqrt{D})}{1728},\\
	E': y^2 &= x^3 - \alpha^2 \frac{(5 + 3 t\sqrt{D})}{384}x + \alpha^3 \frac{(7 + 9t\sqrt{D})}{13824}.
\end{align*}
Therefore, as $t$ ranges over $K$ and $\alpha \in L$ ranges over all solutions to $\alpha \conj{\alpha} \in -2 \cdot (L^{\times})^2$, the two models above range over all twists of strict $K$-curves defined over $L$ corresponding to a $K$-rational fiber of $X_0^+(N)$. A similar construction may be given for other $N$ with $X_0(N)$ of genus $0$ as long as a rational parameterization of the conic
\begin{align*}
	a^2 - b^2D = \kappa_N
\end{align*}
is known, where $\kappa_N = h_N \cdot w_N^*h_N$. This is easier to do when $\kappa_N$ is a rational square, which only happens for $N = 2, 3, 4$, and $7$ (see Table \ref{hauptmoduln}). 

A natural way to extend Theorem \ref{main-theorem} would be to prove a version for fibers of $K$-rational points on $X^*(N)$ rather than $X_0^+(N)$, working over multi-quadratic extensions $L/K$ rather than quadratic extensions. Then the result would account for \emph{all} non-CM $K$-curves by Elkies' Theorem \ref{elkies-theorem}. One way to approach this problem would be to find suitable models for $E$ corresponding to $h \in Y_0(N)$ that transform nicely under the Atkin-Lehner involutions, since doing this for the Fricke involution was the key step in the proof of Theorem \ref{main-theorem}.

\appendix

\pagebreak

\section{Special values of the $j$-invariant at CM points}\label{j-appendix}

\begin{figure}[h]
	\centering\begin{tabular}{|c || c | c | c|}
		\hline $N$ & $-1/\tau = i\sqrt{N}$ & $h_N(\tau) = +\sqrt{\kappa_N}$ & $j(\tau) = j(-1/\tau)$ \\
		\hline
		 $2$ & $i\sqrt{2}$ & $64$ & $8000$\\
		 \hline
		 $3$ &  $i\sqrt{3}$ & $81$ & $54000$\\
		 \hline
		 $4$ & $2i$ & $16$ & $287496$ \\
		 \hline
		 $5$ & $i\sqrt{5}$ & $5\sqrt{5}$ & $632000 + 282880 \sqrt{5}$\\
		 \hline $6$ & $i\sqrt{6}$ & $6\sqrt{2}$ & $2417472 + 1707264 \sqrt{2}$\\
		 \hline $7$&$i\sqrt{7}$ & $7$ &  $16581375$\\
		 \hline $8$ &$2i\sqrt{2}$ & $4\sqrt{2}$ & $26125000 + 18473000 \sqrt{2}$\\
		 \hline $9$ &$3i$ & $3\sqrt{3}$ & $76771008 + 44330496 \sqrt{3}$\\
		 \hline $10$ &$i\sqrt{10}$ &$ 2\sqrt{5}$ & $212846400 + 95178240 \sqrt{5}$\\
		 \hline $12$ &$2i\sqrt{3}$ &$2\sqrt{3}$ & $1417905000 + 818626500 \sqrt{3}$\\
		 \hline $13$ &$i\sqrt{13}$ &$\sqrt{13}$ & $3448440000 + 956448000 \sqrt{13}$\\
		 \hline $16$ &$4i$ & $2\sqrt{2}$ & $ 41113158120 + 29071392966 \sqrt{2}$\\
		 \hline $18$ &$3i\sqrt{2}$ & $\sqrt{6}$ & $188837384000 + 77092288000 \sqrt{6}$ \\
		 \hline $25$ &$5i$ & $\sqrt{5}$ & $22015749611520 + 9845745509376 \sqrt{5}$\\
		 \hline
	\end{tabular}
	\caption{Special values of the $j$-invariant at CM points given by fixed points of the Fricke involution.}
	\label{special-j}
\end{figure}
\pagebreak
\section{Tables of coefficients of cyclic $N$-isogenies in terms of Hauptmoduln}\label{app-1}

These tables express the coefficients of an elliptic curve $E$ and its isogenous curve $E'$ corresponding to a point in $Y_0(N)$ given in terms of the Hauptmodul $h_N$ (see Table \ref{hauptmoduln}): 
\begin{align*}
	E: y^2 = x^3 - \frac{A_4}{48} + \frac{A_6}{864},\\
	E: y^2 = x^3 - \frac{A_4'}{48} + \frac{A_6'}{864},\\
\end{align*}
where 
\begin{align*}
	A_4 &:= \mathsf{E}_4(\tau)/\mathsf{E}_2(\tau)^2, & A_6 &:= \mathsf{E}_6(\tau)/\mathsf{E}_2(\tau)^3,\\
	A_4' &:= \mathsf{E}_4(N\tau)/\mathsf{E}_2(\tau)^2, & A_6' &:= \mathsf{E}_6(N\tau)/\mathsf{E}_2(\tau)^3.\\
\end{align*}
We omit the factors of $-1/48$ and $1/864$ in the tables. Additionally, the denominators of $A_4, A_6, A_4', A_6'$ often involve common factors that can be twisted away. When this applies, we label these factors as $D$ and instead give expressions for $D^2A_4, D^3A_6, D^2A_4', D^3A_6'$ in order to save space.

\begin{figure}[h]
\centering \scalebox{1.3}{\begin{tabular}{|c || c | c | c | c|} 
	\hline $\mathbf{N}$  & $A_4$ & $A_6$ & $A_4'$ & $A_6'$\\
	\hline $\mathbf{2}$ & $\frac{h + 256}{h + 64}$ & $\frac{h - 512}{h + 64}$ & $\frac{h + 16}{h + 64}$ & $\frac{h - 8}{h + 64}$\\
	
	\hline $\mathbf{3}$ & $\frac{h + 243}{h + 27}$ & $\frac{h^2 - 486h - 19683}{(h + 27)^2}$ & $ \frac{h + 3}{h + 27}$ & $ \frac{h^2 + 18h -27 }{(h + 27)^2}$\\

	\hline $\mathbf{4}$ & $\frac{h^2 + 256 h + 4096}{(h + 16)^2}$ & $\frac{(h + 32)(h^2 - 512h - 8192)}{(h + 16)^3}$ & $\frac{h^2 + 16h + 16}{(h + 16)^2}$ & $\frac{(h + 8)(h^2 + 16h - 8)}{(h + 16)^3}$\\

	\hline $\mathbf{5}$ & $\frac{h^2 + 250h + 3125}{h^2  +22 h + 125}$ & $\frac{h^2 - 500h - 15625}{h^2 + 22h + 125}$ & $\frac{h^2 + 10h + 5}{h^2 + 22h + 125}$ & $\frac{h^2 + 4h -1}{h^2 + 22h + 125}$\\
	\hline
	\end{tabular}
	}
	\caption{Coefficients of $E, E'$ corresponding to $h = h_N$ for $N = 2, 3, 4, 5$. }\label{elliptic-coefficients-start}
	\end{figure}

	\begin{figure}
	$\mathbf{N = 6}$

	\centering
	\scalebox{1.0}{\begin{tabular}{|c || c |}

	\hline $D^2A_4$ & $25\cdot {(h + 12)(h^3 + 252h^2 + 3888h + 15552)}$\\ 

	\hline $D^3A_6$ & $125\cdot {(h^2 + 36 h + 216)(h^4 - 504h^3 - 13824h^2 - 124416h - 373248)}$\\ 

	\hline $D^2A_4'$ & $25 \cdot {(h + 6)(h^3 + 18h^2 + 84h + 24)}$\\ 

	\hline  $D^3A_6'$ & $125 \cdot {(h^2 + 12h + 24)(h^4 + 24h^3 + 192 h^2 + 504h - 72)}$ \\

	\hline $D$ & $5h^2 + 84h + 360$ \\

	\hline
	\end{tabular}}
	\caption{Coefficients of $E, E'$ corresponding to $h = h_6$.}
	\end{figure}

\begin{figure}
	$\mathbf{N = 7}$

\centering
	\scalebox{1.3}{
	\begin{tabular}{|c||c|}
	\hline $A_4$ & $\frac{h^2 + 245h + 2401}{h^2 + 13h + 49}$ \\

	\hline $A_6$ & $\frac{h^4 - 490h^3 - 21609h^2 - 235298h -823543}{(h^2 + 13h + 49)^2}$ \\

	\hline $A_4'$ & $ \frac{h^2 + 5h + 1}{h^2 + 13h + 49}$ \\

	\hline $A_6'$ & $\frac{h^4 + 14h^3 + 63h^2 + 70h - 7}{(h^2 + 13h + 49)^2}$\\
	\hline
\end{tabular}
}
\caption{Coefficients of $E, E'$ corresponding to $h = h_7$.}
\end{figure}

\begin{figure}
$\mathbf{N = 8}$

\centering
\scalebox{1.2}{
\begin{tabular}{|c||c|}
	\hline $D^2A_4$ & $49 \cdot {(h^4 + 256h^3 + 5120h^2 + 32768h + 65536)}$\\

	\hline $D^3A_6$ & $343\cdot {(h^2 + 32h + 128)(h^4 - 512h^3 - 10240h^2 - 65536h - 131072)}$ \\

	 \hline $D^2A_4'$ & $49\cdot ({h^4 + 16h^3 + 80h^2 + 128h + 16})$\\

	\hline $D^3A_6'$ & $343 \cdot {(h^2 + 8h + 8)(h^4 + 16h^3 + 80h + 128h - 8)}$\\

	\hline $D$ & $7h^2 + 80h+ 224$\\

	\hline 
\end{tabular}
}
\caption{Coefficients of $E, E'$ corresponding to $h = h_8$.}
\end{figure}
\begin{figure}
$\mathbf{N = 9}$

\centering
\scalebox{1.1}{
\begin{tabular}{|c||c|}
	\hline $D^2A_4$ & ${(h + 1)(h^3 + 243h^2 + 2187h + 6561)}$\\

	\hline $D^3A_6$ &${h^6 - 486h^5 - 24057h^4 - 367416h^3 - 2657205h^2 - 9565938h - 14348907}$\\

	\hline $D^2A_4'$ & ${(h + 27)(h^3 + 9h^2 + 27h + 3)}$\\
	\hline $D^3A_6'$ & ${h^6 + 18h^5 + 135h^4 + 504h^3 + 891h^2 + 486h - 27}$\\
	\hline $D$ & $h^2 + 9h + 27$ \\
	\hline
\end{tabular}
}
\caption{Coefficients of $E, E'$ corresponding to $h = h_9$. }
\end{figure}

\begin{figure}
$\mathbf{N = 10}$

\centering \scalebox{1.2}{
\begin{tabular}{|c||c|}
	\hline  $D^2A_4$ & $9 \cdot \frac{h^6 + 260h^5 + 6400h^4 + 64000h^3 + 320000h^2 + 800000h + 800000}{h^2 + 8h + 20}$\\

	\hline$D^3A_6$ & $27\cdot \frac{(h^2 + 12h + 40)(h^2 + 30h + 100)(h^{4} - 520h^3 - 6600h^2 - 28000h - 40000)}{h^2 + 8h + 20}$ \\ 

	\hline$D^2A_4'$ & $9 \cdot \frac{h^6+20 h^5+160 h^4+640 h^3+1280 h^2+1040
   h+80}{h^2+8 h+20 }$ \\ 

  \hline $D^3A_6'$& $27\cdot \frac{\left(h^2+6 h+4\right) \left(h^2+6 h+10\right) \left(h^4+14
   h^3+66 h^2+104 h-4\right)}{h^2+8 h+20}$\\

   \hline $D$ & $3 h^2+26h+60$\\

   \hline
\end{tabular}
}
\caption{Coefficients of $E, E'$ corresponding to $h = h_{10}$.}
\end{figure}

\begin{figure}
$\mathbf{N = 12}$
\vspace{10pt}

\centering \scalebox{0.85}{
\begin{tabular}{|c||c|}
   \hline  

  $D^2A_4$ & $ 121 \cdot {(h^2 + 12h + 24)(h^6 + 252h^5 + 4392h^4 +31104h^3 + 108864h^2 + 186624h + 124416)}$ \\

     \hline $D^3A_6$ & \makecell[l]{$1331 \cdot (h^4 + 36h^3 + 288h^2 + 864h + 864)$ \\ \;\;\;\;\;\;\;\;\; $\cdot \; (h^8 - 504 h^7 - 14832 h^6 - 179712 h^5 - 1175040 h^4 - 4478976 h^3 - 
 9953280 h^2 - 11943936 h - 5971968)$} \\

  \hline $D^2A_4'$ & $121 \cdot {\left(h^2+6 h+6\right) \left(h^6+18 h^5+126 h^4+432 h^3+732
   h^2+504 h+24\right)}$ \\

   \hline $D^3A_6'$ & $1331 \cdot {\left(h^4+12 h^3+48 h^2+72 h+24\right) \left(h^8+24 h^7+240
   h^6+1296 h^5+4080 h^4+7488 h^3+7416 h^2+3024 h-72\right)}$\\
   \hline $D$ & $11
   h^4+156 h^3+816 h^2+1872 h+1584$\\

   \hline
\end{tabular}}

\caption{Coefficients of $E, E'$ corresponding to $h = h_{12}$.}
\end{figure}
\begin{figure}
$\mathbf{N = 13}$

\centering
\scalebox{1.2}{
\begin{tabular}{|c || c|}
	\hline  $A_4$ & $\frac{h^4 + 247h^3 + 3380h^2 + 15379h + 28561}{(h^2 + 5h + 13)(h^2 + 6h + 13)}$ \\

	 \hline $A_6$ & $\frac{h^6 - 494h^5 - 20618h^4 - 237276h^3 - 1313806h^2 - 3712930h - 4826809}{(h^2 + 5h + 13)^2(h^2 + 6h + 13)}$ \\

	 \hline $A_4'$ & $\frac{h^4 + 7h^3 + 20h^2 + 19h + 1}{(h^2 + 5h + 13)(h^2 + 6h + 13)}$ \\

	 \hline $A_6'$ & $\frac{h^6 + 10h^5 + 46h^4 + 108 h^3 + 122 h^2 + 38 h - 1}{(h^2 + 5h + 13)^2(h^2 + 6h + 13)}$ \\

	 \hline 
	 \end{tabular}
	 }
	 \caption{Coefficients of $E, E'$ corresponding to $h = h_{13}$.}
	 
	 \end{figure}

\begin{figure}
$\mathbf{N = 16}$
\vspace{10pt}

\centering \scalebox{0.9}{
\begin{tabular}{|c || c|}
	\hline  $D^2A_4$ & $25\cdot {(h^8 + 256 h^7 + 5632 h^6 + 53248 h^5 + 282624 h^4 + 917504 h^3 + 
 1835008 h^2 + 2097152 h + 1048576)}$ \\

\hline
  $D^3A_6$ & \makecell[l]{$125 \cdot (h^4 + 32 h^3 + 192 h^2 + 512 h + 512)$\\ \;\;\;\;\;\;\; $\cdot \; (h^8 - 512 h^7 - 11264 h^6 - 106496 h^5 - 565248 h^4 - 1835008 h^3 - 
 3670016 h^2 - 4194304 h - 2097152 )$} \\
 \hline

  $D^2A_4'$ & $25 \cdot { \left(h^8+16 h^7+112 h^6+448 h^5+1104 h^4+1664 h^3+1408
   h^2+512 h+16\right)}$ \\
   \hline

    $D^3A_6'$ & $125 \cdot {\left(h^4+8 h^3+24 h^2+32 h+8\right) \left(h^8+16 h^7+112
   h^6+448 h^5+1104 h^4+1664 h^3+1408 h^2+512 h-8\right)}$\\

   \hline $D$ & $\left(h^2+4
   h+8\right) \left(5 h^2+28 h+40\right)$
   \\ \hline
\end{tabular}}
\caption{Coefficients of $E, E'$ corresponding to $h = h_{16}$.}
   \end{figure}
\begin{figure}
$\mathbf{N = 18}$
\vspace{10pt}

\centering\scalebox{0.85}{

\begin{tabular}{|c || c|}
   \hline 
   
   \hline $D^2A_4$ & \makecell[l]{$289\cdot (h^3 + 12h^2 + 36h+36)$ \\ \;\;\;\;\;\;\;\; $\cdot \;(h^9 + 252 h^8 + 4644 h^7 + 39636 h^6 + 198288 h^5 + 629856 h^4 + 
 1294704 h^3 + 1679616 h^2 + 1259712 h + 419904)$ } \\
 
  \hline $D^3A_6$ & \makecell[l]{$4913 \cdot (h^6 + 36 h^5 + 324 h^4 + 1404 h^3 + 3240 h^2 + 3888 h + 1944 ) $ \\ \;\;\;\;\;\;\;\;$\cdot \; (h^{12} - 504 h^{11} - 15336 h^{10} - 208872 h^9 - 1700352 h^8 - 9206784 h^7 - 
 34836480 h^6$ \\ $\;\;\;\;\;\;\;\;\;\;\;\;\;\;\;\; - \;94058496 h^5 + 181398528 h^4 - 245223936 h^3 + 
 221709312 h^2 - 120932352 h - 30233088 )$} \\
 
  \hline $D^2A_4'$ & $289\cdot \left(h^3+6 h^2+12 h+6\right) \left(h^9+18 h^8+144 h^7+666
   h^6+1944 h^5+3672 h^4+4404 h^3+3096 h^2+1008 h+24\right)$ \\
   
   \hline $D^3A_6'$ & \makecell[l]{$ 4913 \cdot \left(h^6+12 h^5+60 h^4+156 h^3+216 h^2+144 h+24\right)$ \\ $\;\;\;\;\;\;\;\;
   \cdot \; (h^{12}+24 h^{11}+264 h^{10}+1752 h^9+7776 h^8+24192 h^7+53760
   h^6 $ \\ \;\;\;\;\;\;\;\;\;\;\;\;\;\;\;\; $+\;85248 h^5+94464 h^4+69624 h^3+30672 h^2+6048
   h-72)$ }\\
   \hline $D$ & $ 17 h^6+228 h^5+1332 h^4+4284 h^3+7992 h^2+8208
   h+3672$ \\
   \hline 
   \end{tabular}}
   \caption{Coefficients of $E, E'$ corresponding to $h = h_{18}$.}
   \end{figure}
   \begin{figure}
$\mathbf{N = 25}$
\vspace{10pt}

   \centering \scalebox{0.88}{
\begin{tabular}{|c ||c|}

	\hline $D^2A_4$ & \makecell[l]{$(h^{10} + 250h^9 + 4375h^8 + 35000h^7 + 178125h^6 + 631250h^5 + 1640625h^4$ \\ \;\;\;\;\;\;\; $ + 3125000h^3 + 4296875h^2 + 3906250 h + 1953125)/(h^2 + 2h + 5)$} \\

	\hline $D^3A_6$ & \makecell[l]{$(h^4 + 10h^3 +45h^2 + 100h + 125)$ \\ $  \;\;\;\;\;\;\; \cdot \;(h^{10} - 500h^9 - 18125h^8 - 163750h^7 - 871875h^6 - 3137500h^5 - 8203125h^4$ \\  \;\;\;\;\;\;\; \;\;\;\;\;\;\;$-\; 15625000h^3 - 21484375h^2 - 19531250h-9765625)/(h^2 + 2h + 5)$ }\\

	\hline $D^2A_4'$ & $(h^{10}+10 h^9+55 h^8+200 h^7+525 h^6+1010 h^5+1425 h^4+1400 h^3+875
   h^2+250 h+5)/(h^2 + 2h + 5)$ \\

   \hline $D^3A_6'$ & \makecell[l]{$\left(h^4_{25}+4 h^3+9 h^2+10 h+5\right)$ \\  \;\;\;\;\;\;\; $\cdot \left(h^{10}+10 h^9+55
   h^8+200 h^7+525 h^6+1004 h^5+1395 h^4+1310 h^3+725 h^2+100
   h-1\right)/(h^2+2 h+5)$}\\

   \hline $D$ & $h^4 + 5h^3 + 15h^{2} + 25h + 25$\\

   \hline
\end{tabular}

  }
  \caption{Coefficients of $E, E'$ corresponding to $h = h_{25}$.}
  \label{eiliptic-coefficient-last}
\end{figure}

\pagebreak
{\raggedright \printbibliography}

\end{document}